\documentclass[11pt,letterpaper]{amsart}
\usepackage{tikz}
\usetikzlibrary{intersections, calc, arrows.meta}
\usepackage[utf8]{inputenc}
\usepackage{amsmath} 
\usepackage{amsmath}
\usepackage{amssymb} 
\usepackage{amsfonts} 
 \usepackage{amsthm}
 \usepackage{mathtools}
 \usepackage{latexsym}
 \usepackage{amscd}
 \usepackage[all]{xy}
 \usepackage{mathrsfs}
 \usepackage{yhmath}
\newtheorem{dfn}{Definition}[section]
 \newtheorem{them}[dfn]{Theorem}
 \newtheorem{lem}[dfn]{Lemma}
 \newtheorem{prp}[dfn]{Proposition}

 \newtheorem{cla}[dfn]{Claim}
 \newtheorem{Que}[dfn]{Question}
 \newtheorem{Cor}[dfn]{Corollary}
  \theoremstyle{definition}
\newtheorem{rem}[dfn]{Remark}

 \usepackage{scalerel,stackengine}
\stackMath
\newcommand\reallywidecheck[1]{%
\savestack{\tmpbox}{\stretchto{%
  \scaleto{%
    \scalerel*[\widthof{\ensuremath{#1}}]{\kern-.6pt\bigwedge\kern-.6pt}%
    {\rule[-\textheight/2]{1ex}{\textheight}}
  }{\textheight}%
}{0.5ex}}%
\stackon[1pt]{#1}{\scalebox{-1}{\tmpbox}}%
}
\parskip 1ex

\begin{document}

\pagestyle{plain}
\thispagestyle{plain}

\title[]{Dynamically convex and global surface of section in $L(p,p-1)$ from the  viewpoint of ECH}
\author[Taisuke SHIBATA]{Taisuke SHIBATA}
\address{Research Institute for Mathematical Sciences, Kyoto University, Kyoto 606-8502,
JAPAN.}
\email{shibata@kurims.kyoto-u.ac.jp}

\date{\today}

\begin{abstract}
The notion of dynamically convex has been studied since it was introduced by Hofer, Wysocki and Zehnder. In particular, they showed that there muse exist a global surface of section of disk type binding a periodic orbit for the Reeb vector field in dynamically convex $(S^{3},\lambda)$ by using pseudoholomorphic curves. Recently Hryniewicz and Salomão showed the same result for $L(2,1)$ by developing the original technique and after that  Schneider generalized it to $(L(p,1),\xi_{\mathrm{std}})$. The main purpose of this paper is to introduce an alternative approach of using Embedded contact homology. In particular, we find a global surface of section of disk type in dynamically convex $(L(p,p-1),\xi_{\mathrm{std}})$ and relate their periods to the first ECH spectrum. 
\end{abstract}

\maketitle

\tableofcontents
\section{Introduction and main results}

\subsection{Introduction}
A closed contact three manifold $(Y,\lambda)$ is a pair of a closed contact three manifold $Y$ with a contact 1-form $\lambda$. 
A  contact form $\lambda$ on $Y$ defines the Reeb vector field $X_{\lambda}$ and the contact structure $\xi=\mathrm{Ker}\lambda$.  A periodic orbit is a map $\gamma:\mathbb{R}/T_{\gamma}\mathbb{Z}\to Y$ satisfying $\Dot{\gamma}=X_{\lambda}\circ \gamma$ for some $T_{\gamma}>0$ and we write $\gamma^{p}$ for $p\in \mathbb{Z}$ as a periodic orbit of composing $\gamma$ with 
the natural projection $\mathbb{R}/pT_{\gamma}\mathbb{Z}\to \mathbb{R}/T_{\gamma}\mathbb{Z}$. In this paper, we consider two orbit to be equivalent if they are  equivalent as currents. A periodic orbit $\gamma$ is non-degenerate if the return map $d\phi^{T_{\gamma}}|_{\xi}:\xi_{\gamma(0)}\to\xi_{\gamma(0)}$ has no eigenvalue $1$ and simple if $\gamma$ is an embedding map where $\phi^{t}$ is the flow of $X_{\lambda}$. We call $(Y,\lambda)$ non-degenerate if all periodic orbits are non-degenerate.
According to \cite{HWZ2}, for a (possibly degenerate) orbit $\gamma$ and a symplectic trivialization $\tau:\gamma^{*}\xi \to \mathbb{R}/T_{\gamma}\mathbb{Z} \times \mathbb{C}$, Conley-Zehnder index is defined and we write $\mu_{\tau}(\gamma)$. We would recall Conley-Zehnder index in the next section.  

In this paper we focus on lens spaces $L(p,q)$ and the standard contact structure $\xi_{\mathrm{std}}$ on them. Here $p\geq q>0$  are mutually prime and the standard contact structure $\xi_{\mathrm{std}}$ is defined as follows.
Consider a contact 3-sphere $(\partial B(1),\lambda_{0}|_{\partial B(1)})$ where $\partial B(1)=\{(z_{1},z_{2})\in \mathbb{C}^{2}||z_{1}|^{2}+|z_{2}|^{2}=1\}$, $\lambda_{0}=\frac{i}{2}\sum_{i=1,2}(z_{i}d\Bar{z_{i}}-\Bar{z_{i}}dz_{i})$. The action $(z_{1},z_{2})\mapsto (e^{\frac{2\pi i}{p}}z_{1},e^{\frac{2\pi iq}{p}}z_{2})$ preserves $(\partial B(1),\lambda_{0}|_{\partial B(1)})$ and the tight contact structure. Hence we have the quotient space which is a contact manifold and write  $(L(p,q),\lambda_{p,q})$, $\xi_{\mathrm{std}}=\mathrm{Ker}\lambda_{p,q}$.

\begin{dfn}
    $(S^{3},\lambda)$ is strictly convex if there is an embedding $i:S^{3}\to \mathbb{C}^{2}$ such that $i(S^{3})\subset \mathbb{C}^{2}$ is a strictly convex hypersurface surrounding $0$ and $i^{*}\lambda_{0}=\lambda$. More generally, consider $(L(p,q),\lambda)$ and  the covering map  $\rho:S^{3}\to L(p,q)$.
    $(L(p,q),\lambda)$ is strictly convex if there is an embedding $i:S^{3}\to \mathbb{C}^{2}$ with $i^{*}\lambda_{0}=\rho^{*}\lambda$ such that  $i(S^{3})\subset \mathbb{C}^{2}$ is a strictly convex hypersurface surrounding $0$ and in addition $i$ is an equivalent map under $(z_{1},z_{2})\mapsto (e^{\frac{2\pi i}{p}}z_{1},e^{\frac{2\pi iq}{p}}z_{2})$.
\end{dfn}
\begin{dfn}[{\cite{HWZ4}}]
    Assume that a contact three manifold $(Y,\lambda)$ satisfies $c_{1}(\xi)|_{\pi_{2}(Y)}=0$. $\lambda$ is called dynamically convex if $\mu_{\mathrm{disk}}(\gamma)\geq3$ for any contractible periodic orbit $\gamma$, where $\mu_{\mathrm{disk}}(\gamma)$ is defined as follows. Take  a smooth map $u:\{|z|\leq 1|z\in \mathbb{C}\}:=\mathbb{D} \to Y$ with $u(e^{2\pi i t})=\gamma(T_{\gamma}t)$ and a global trivialization $v:u^{*}\xi \to \mathbb{D}\times \mathbb{C}$. Then $\mu_{\mathrm{disk}}(\gamma):=\mu_{v|_{\partial \mathbb{D}}}(\gamma)$. Note that $\mu_{\mathrm{disk}}(\gamma)$ is independent of the choice of $u$ since $c_{1}(\xi)|_{\pi_{2}(Y)}=0$.
\end{dfn}
\begin{them}\cite{HWZ4}
    If  $(S^{3},\lambda)$ (resp. $(L(p,q),\lambda)$) is strictly convex, then  $(S^{3},\lambda)$  (resp. $(L(p,q),\lambda)$) is dynamically convex.
\end{them}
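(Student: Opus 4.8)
The plan is to reduce the statement to the classical index estimate of Hofer, Wysocki and Zehnder for closed characteristics on strictly convex hypersurfaces, and then to transport it to the lens space by passing to the cover. I first treat the sphere. Given the strictly convex embedding $i:S^{3}\to\mathbb{C}^{2}$ with $i^{*}\lambda_{0}=\lambda$, let $\Sigma=i(S^{3})$ and let $U$ be the bounded convex region it encloses. I would choose the Hamiltonian $H:\mathbb{C}^{2}\to\mathbb{R}$ that is positively $2$-homogeneous, $H(sx)=s^{2}H(x)$, normalized so that $\Sigma=\{H=1\}$; strict convexity of $\Sigma$ is equivalent to positive definiteness $\mathrm{Hess}\,H(x)>0$ for $x\neq 0$. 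Up to a positive reparametrization of time the Reeb flow of $\lambda$ on $\Sigma$ coincides with the Hamiltonian flow of $H$, and such a reparametrization does not change the Conley–Zehnder index on $\xi$; so it suffices to estimate $\mu_{\mathrm{disk}}$ for the closed characteristics of $H$.

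Second, I linearize. Along a $T$-periodic orbit $x(t)$ the linearized flow solves $\dot\zeta=J\,\mathrm{Hess}\,H(x(t))\,\zeta$. Euler's identity $\mathrm{Hess}\,H(x)\,x=\nabla H(x)$ shows that both $x(t)$ and $\dot x(t)=J\nabla H(x(t))$ are genuinely $T$-periodic solutions, spanning a linearized-flow–invariant symplectic $2$-plane $E(t)$ on which the monodromy is unipotent (the degenerate ``energy $+$ flow'' direction). The interesting dynamics live on a symplectic complement of $E(t)$, which I identify with $\xi_{x(t)}$; there the linearized flow is a path in $\mathrm{Sp}(2,\mathbb{R})$ generated by a positive-definite symmetric form induced by $\mathrm{Hess}\,H>0$.

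Third comes the index estimate, which is the crux. Writing the reduced solution in polar coordinates $z=re^{i\theta}$, positive definiteness of the generating form forces $\dot\theta$ to have a fixed sign, so $\theta$ is strictly monotone; a Sturm-type comparison with the round sphere (whose simple characteristics are Hopf fibers realizing $\mu_{\mathrm{disk}}=3$) shows that over one period the transverse rotation strictly exceeds one full turn. In the disk trivialization this gives total rotation in the interval $(1,2)$, hence $\mu_{\mathrm{disk}}(\gamma)\geq 3$ for every simple contractible orbit; iterating only increases the rotation, so the bound persists for all contractible orbits. I expect this quantitative winding estimate—extracting ``more than one full revolution'' from convexity, with the correct normalization relative to the capping disk—to be the main obstacle, since mere star-shapedness does not suffice and the argument genuinely uses $\mathrm{Hess}\,H>0$.

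Finally, the lens space case. Using the covering $\rho:S^{3}\to L(p,q)$ and the equivariance of $i$ under the $\mathbb{Z}/p$–action, the hypersurface $\Sigma$ is invariant and $\rho^{*}\lambda$ is exactly the strictly convex datum on $S^{3}$ already treated. A contractible periodic orbit $\gamma$ in $L(p,q)$ bounds a disk $u:\mathbb{D}\to L(p,q)$, which lifts through $\rho$ to a disk capping a periodic orbit $\tilde\gamma$ of $\rho^{*}\lambda$ upstairs; since $\rho$ is a local contactomorphism it identifies the linearized Reeb flows and the capping trivializations, so $\mu_{\mathrm{disk}}(\gamma)=\mu_{\mathrm{disk}}(\tilde\gamma)\geq 3$ by the sphere case. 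This proves dynamical convexity of $(L(p,q),\lambda)$.
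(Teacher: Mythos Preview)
The paper does not give its own proof of this statement; it is quoted from \cite{HWZ4} without argument, so there is nothing here to compare against directly. Your outline is broadly in the spirit of the original Hofer--Wysocki--Zehnder proof (homogeneous Hamiltonian, positive-definite Hessian, rotation estimate), and your reduction of the lens-space case to the sphere via lifting contractible orbits and their capping disks through $\rho$ is correct.

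There is, however, a real gap at the step you yourself flag as the main obstacle. After splitting off the plane $E(t)=\mathrm{span}\{x(t),\dot x(t)\}$ and passing to the transverse bundle $\xi$, you invoke a ``Sturm-type comparison with the round sphere'' to force the transverse rotation over one period to exceed a full turn. This does not work as stated: strict convexity yields $\mathrm{Hess}\,H>0$ but no lower bound of the form $\mathrm{Hess}\,H \geq c\cdot\mathrm{Id}$ with a constant $c$ tied to the period $T$, so a naive comparison with the round Hessian gives nothing. The mechanism in \cite{HWZ4} is different and stays in $\mathbb{R}^{4}$: the point is precisely that $x(t)$ and $\dot x(t)$ are two linearly independent $T$-periodic solutions of the full linearized system $-J\dot y = H''(x(t))\,y$, and together with the monotone winding forced by positivity of $H''$ this already pins the $4$-dimensional Maslov-type index at $\geq 3$; a separate lemma then identifies this $4$-dimensional index with the transverse index $\mu_{\mathrm{disk}}$. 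In your scheme those periodic solutions live exactly in the plane $E(t)$ you discard, so once you restrict to $\xi$ you have thrown away the anchor that produces the quantitative bound. The fix is to run the index estimate in $\mathbb{R}^{4}$ using $x$ and $\dot x$, and only afterwards reduce to $\xi$.
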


\begin{rem}
        It follows from the definition that the condition of dynamical convexity is preserved under taking a finite cover. 
\end{rem}

The notion of dynamically convex is a generalization of strictly convex energy hypersurfaces in $\mathbb{R}^{4}$ from the viewpoint of dynamical system and has been studied since it was introduced in \cite{HWZ4}. To explain the results, we start with introducing some notions.

\begin{dfn}
    A knot $K\subset Y$ is called $p$-unknotted if there exists an immersion $u:\mathbb{D}\to Y$ such that $u|_{\mathrm{int}(\mathbb{D})}$ is embedded and $u|_{\partial \mathbb{D}}:\partial \mathbb{D}\to K$ is a $p$-covering map.
    \end{dfn}

    \begin{dfn}\cite[cf. Subsection 1.1]{BE}
    Assume that a knot $K\in Y$ is $p$-unknotted, transversal to $\xi$ and oriented by the co-orientation of $\xi$. Let $u:\mathbb{D}\to Y$ be  an immersion  such that $u|_{\mathrm{int}(\mathbb{D})}$ is embedded and $u|_{\partial \mathbb{D}}:\partial \mathbb{D}\to K$ is a $p$-covering map. Take a non-vanishing section $Z:\mathbb{D}\to u^{*}\xi$ and consider the immersion $\gamma_{\epsilon}:t\in \mathbb{R}/\mathbb{Z} \to \mathrm{exp}_{u(e^{2\pi i t})}(\epsilon Z(u(e^{2\pi i t})))\in Y\backslash K$ for small $\epsilon>0$. 
    
    Define the rational self-linking number $\mathrm{sl}(K,u)\in \mathbb{Q}$ as
    \begin{equation*}
        \mathrm{sl}(K,u)=\frac{1}{p^{2}} (\mathrm{algebraic\,\,intersection\,\,number\,\,of}\,\, \gamma_{\epsilon}\,\,\mathrm{with}\,\,u)
    \end{equation*}
    If  $c_{1}(\xi)|_{\pi_{2}(Y)}=0$, $\mathrm{sl}(K,u)$ is independent of $u$. Hence  we write $\mathrm{sl}(K)$.
\end{dfn}
\begin{rem}
In generall, (rational) self-linking number is defined for rationally null-homologous knot by using a (rational) Seifert surface. See \cite{BE}.
\end{rem}

Note that we assume that lens spaces $L(p,q)$ contain $S^{3}$ as a lens space with $p=1$. 
\begin{dfn}
    Let $(Y,\lambda)$ be a contact three-manifold.  A global surface of section for $X_{\lambda}$ on a 3-manifold M
is a compact immersed surface $\Sigma \to Y$ such that
\item[(1).]$\Sigma\backslash \partial \Sigma$ is embedded,
\item[(2).] $X_{\lambda}$ is transversal to $\Sigma\backslash \partial \Sigma$,
\item[(3).] $\partial \Sigma$ consists of periodic orbits of $X_{\lambda}$,
\item[(4).] For every $x\in Y\backslash \partial \Sigma$, there are $-\infty<t_{x}^{-}<0<t_{x}^{+}<+\infty$ such that $\phi^{t^{\pm}_{x}}(x)\in \Sigma$ where $\phi^{t}$ is the flow of $X_{\lambda}$.
\end{dfn}

According to  \cite{HWZ3,HWZ4,HWZ5,Hr1},  if  a dynamically convex contact 3-manifold $(Y,\lambda)$ with $\mathrm{Ker}\lambda=\xi$ has a 1-unknotted simple orbit $\gamma$ with  self-linking number -1, then $(Y,\xi)$ is contactomorphic to $(S^{3},\xi_{\mathrm{std}})$ and in addition there is a global surface of section of disk type binding $\gamma$. Moreover  any dynamically convex contact form $\lambda$ on $S^{3}$ has a 1-unknotted simple orbit $\gamma$ with  self-linking number $-1$ and $\mu_{\mathrm{disk}}(\gamma)=3$.

In \cite{HrLS}, Hryniewicz,  Licata, and  Salomão generalized the result to lens spaces as follows. Let $p\in \mathbb{Z}_{>0}$.
A closed connected contact 3-manifold $(Y,\xi)$ is contactomorphic to $(L(p,k),\xi_{\mathrm{std}})$ for some $k$ if and only if there is a dynamically convex contact form $\lambda$ with $\mathrm{Ker}\lambda=\xi$ 
such that $X_{\lambda}$ has  a $p$-unknotted simple orbit $\gamma$ with self-linking number $-\frac{1}{p}$. In connection with these results,  Hryniewicz and  Salomão showed the following result.

\begin{them}\label{fundament}\cite[Theorem 1.7, Corollary 1.8]{HrS}
If $\lambda$ is any dynamically convex contact form on $L(p,q)$, then for every
$p$-unknotted simple orbit $\gamma$ with $\mathrm{sl}(\gamma)=-\frac{1}{p}$, $\gamma^{p}$ must bound a 
a disk which is a global surface of section for the Reeb flow. Moreover, this disk is a page of a rational open book decomposition of $L(p,q)$ with binding $\gamma$ such that all pages are disk-like global surfaces of section.  
\end{them}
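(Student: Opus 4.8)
The plan is to bypass the pseudoholomorphic-curve technology of Hofer--Wysocki--Zehnder and read the surface of section directly out of Embedded Contact Homology, using that $ECH$ depends only on the contact structure (Taubes), so that $ECH(L(p,q),\xi_{\mathrm{std}})$ equals its standard computation. First I would assume $\lambda$ non-degenerate, handling the general case afterwards by an action-continuity limiting argument that keeps $\gamma$ fixed, and work in the symplectization $(\mathbb{R}\times Y, d(e^{s}\lambda))$ with a generic admissible $J$. The central object is the orbit set $\{(\gamma,p)\}$, the simple orbit $\gamma$ with multiplicity $p$: since $[\gamma]$ generates $H_{1}(L(p,q))=\mathbb{Z}/p$, the class $p[\gamma]$ is trivial, so $\{(\gamma,p)\}$ and the empty set $\emptyset$ lie in the same summand $ECH_{*}(Y,\lambda,0)$, which is exactly the setting in which a plane asymptotic to $\gamma^{p}$ can be counted.

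The first step is an index computation. Using dynamical convexity I would show $\gamma$ is elliptic with transverse rotation number $\theta$ satisfying $\theta>1/p$ (forced by $\mu_{\mathrm{disk}}(\gamma^{p})\geq 3$), the elliptic condition being precisely what legitimizes the multiplicity-$p$ generator in $ECH$. Feeding $\mathrm{sl}(\gamma)=-\tfrac{1}{p}$ and $p$-unknottedness into Hutchings' formula $I=c_{\tau}+Q_{\tau}+\sum_{k=1}^{p}\mu_{\tau}(\gamma^{k})$, with $\tau$ the disk trivialization, I expect to obtain $I(\{(\gamma,p)\},\emptyset)=2$, and the range of $\theta$ pinned down by $\mathrm{sl}(\gamma)=-\tfrac{1}{p}$ should make the positive asymptotic partition the single $p$-fold end $(p)$. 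Combined with the known structure of $ECH(L(p,q),\xi_{\mathrm{std}},0)$, in which $U$ is an isomorphism from the summand containing $\{(\gamma,p)\}$ onto the one containing $\emptyset$, this forces $U\{(\gamma,p)\}=\pm\emptyset\neq 0$.

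Nonvanishing of this $U$-coefficient means that for a generic point $z\in Y$ there is an $I=2$ holomorphic current from $\{(\gamma,p)\}$ to $\emptyset$ through $(0,z)$. Because the target is empty and $I=2$, the ECH index inequality together with the partition $(p)$ at the elliptic orbit $\gamma$ forces this current to be a single embedded genus-zero plane, positively asymptotic to $\gamma^{p}$ with no negative ends. Its projection $\pi_{Y}$ is then an immersed disk transverse to $X_{\lambda}$ on its interior, with boundary the $p$-fold cover of $\gamma$. Varying $z$ and invoking Siefring's intersection theory, two such planes are either $\mathbb{R}$-translates or project to disjoint disks, so the disks are pairwise disjoint and sweep out all of $Y\setminus\gamma$, yielding the rational open book with binding $\gamma$ and disk pages; transversality and the covering property supply axioms (1)--(4) of a global surface of section. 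Finally the action identity $\mathcal{A}(\{(\gamma,p)\})=pT_{\gamma}$ equals the first ECH spectral number $c_{1}(Y,\lambda)$, realizing the assertion relating the period to the first ECH spectrum.

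The main obstacle is the passage from the algebraic statement $U\{(\gamma,p)\}\neq 0$ to the geometric foliation. This requires upgrading ``a curve through a generic $z$'' to ``a unique embedded page through \emph{every} $z$,'' which needs the automatic-transversality and uniqueness properties special to these low-index planes together with SFT-type compactness ruling out breakings incompatible with $I=2$ and with the asymptotics at $\gamma$. It also requires the asymptotic analysis guaranteeing the partition is exactly $(p)$, so that each page is genuinely a disk rather than a multiply-punctured or higher-genus surface; here the precise interval for $\theta$ coming from dynamical convexity and $\mathrm{sl}(\gamma)=-\tfrac{1}{p}$ is essential. Controlling the degenerate case by a limit, while preventing loss of transversality and ensuring survival of the binding orbit, is the remaining delicate point.
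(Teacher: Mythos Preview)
This theorem is not proved in the paper; it is quoted from Hryniewicz--Salom\~ao, whose argument uses finite-energy pseudoholomorphic planes in the tradition of Hofer--Wysocki--Zehnder rather than ECH. Your proposal to derive it from ECH has genuine gaps.

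First, nothing in the hypotheses forces $\gamma$ to be elliptic. The statement concerns an \emph{arbitrary} $p$-unknotted simple orbit with $\mathrm{sl}(\gamma)=-\tfrac{1}{p}$; dynamical convexity only says $\mu_{\mathrm{disk}}(\gamma^{p})\geq 3$, which is compatible with $\gamma$ being positive hyperbolic (e.g.\ $\mu(\gamma)=2$) or, for $p\geq 3$, negative hyperbolic with $\mu(\gamma)=1$ (compare Lemma~\ref{conleyindex}). If $\gamma$ is hyperbolic then $(\gamma,p)$ is not an admissible ECH generator for $p\geq 2$, and your scheme never starts. Relatedly, the self-linking number does not pin down the rotation number, and the ECH index $I((\gamma,p),\emptyset)$ involves $\sum_{k=1}^{p}\mu_{\tau}(\gamma^{k})$; dynamical convexity controls only the top iterate $\mu(\gamma^{p})$, not the intermediate ones. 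The computation $I=2$ in Lemma~\ref{index2} explicitly uses the extra input $\mu(\gamma)=1$, $\mu(\gamma^{2})=3$, which the hypotheses here do not supply; for an elliptic $\gamma$ with larger rotation number one gets $I>2$.

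Second, and more seriously, the inference ``$U$ is an isomorphism on homology, hence $U\{(\gamma,p)\}=\pm\emptyset$ at the chain level'' is a non sequitur: you must first know that the \emph{class} $[(\gamma,p)]$ is nonzero in $\mathrm{ECH}_{2}$. Establishing this is precisely the content of Lemma~\ref{nonzero} (for $p=2$, $\mu(\gamma)=1$), and that proof passes through Lemma~\ref{Umapnonzero}, which \emph{invokes} Theorem~\ref{fundament} to produce the open book of holomorphic planes and read off $\langle U_{J,z}(\gamma,2),\emptyset\rangle=1$. So within the logic of this paper your argument is circular: the chain-level nonvanishing you need is obtained by appealing to the very theorem you are trying to prove. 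An independent ECH proof would require some other mechanism to exclude $[(\gamma,p)]=0$, and you have not supplied one.
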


It is natural to consider the next question.
\begin{Que}
     Does  any dynamically convex contact form $\lambda$ on $L(p,q)$  with $\mathrm{Ker}\lambda=\xi_{\mathrm{std}}$ have a $p$-unknotted simple orbit with self-linking number $-\frac{1}{p}$ ?
\end{Que}

For $(L(p,q),\lambda)$, define 
\begin{equation*}
    \mathcal{S}_{p}(L(p,q),\lambda):=\{\gamma \mathrm{\,\,simple\,\,orbit\,\,of}\,\,(L(p,q),\lambda)|\,\,p\mathrm{-unknotted},\,\,\mathrm{sl}(\gamma)=-\frac{1}{p}\,\}.
\end{equation*}
We sometimes write $\mathcal{S}_{p}$ instead of $ \mathcal{S}_{p}(L(p,q),\lambda)$ if there is no confusion. 

Originally as mentioned, Hofer, Wysocki and Zehnder proved in \cite{HWZ4} that any dynamically convex contact form $\lambda$ on $S^{3}$ admits $\gamma \in \mathcal{S}_{1}$ with $\mu_{\mathrm{disk}}(\gamma)=3$. In particular, they used the compactification of pseudoholomorphic curves coming from an ellipsoid in a symplectic manifold which connects the symplectization of the ellipsoid with the one of $(S^{3},\lambda)$. After that, Hryniewicz and  Salomão proved in \cite{HrS} that any dynamically convex contact form $\lambda$ on $L(2,1)$ admits $\gamma \in \mathcal{S}_{2}$ with $\mu_{\mathrm{disk}}(\gamma^{2})=3$ especially such  orbits are elliptic. Recently, Schneider generalized in \cite{Sch} the results to $(L(p,1),\lambda)$ with $\mathrm{Ker}\lambda=\xi_{\mathrm{std}}$. 

 In any cases, the original ideas in \cite{HWZ4} of considering the compactification of pseudoholomorphic curves coming from an ellipsoid are essentially used. Note that  in lens spaces, we should consider the multiple covers of pseudoholomorphic curves and in \cite{HrS,Sch} they  observe that such curves behave good in $(L(p,1),\xi_{\mathrm{std}})$.
 
In summary;
\begin{them}\label{original}\cite{HWZ4,HrS,Sch}
    Let $(L(p,1),\lambda)$ be  dynamically convex  with $\mathrm{Ker}\lambda=\xi_{\mathrm{std}}$. Then, there is a simple orbit $\gamma$ such that $\gamma \in \mathcal{S}_{p}$ and $\mu_{\mathrm{disk}}(\gamma^{p})=3$.
\end{them}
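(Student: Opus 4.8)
The plan is to extract the orbit $\gamma$ from the algebraic structure of embedded contact homology rather than from a compactness analysis of pseudoholomorphic curves. First I would reduce to the nondegenerate case: choose nondegenerate contact forms $\lambda_{n}\to\lambda$, run the argument for each $\lambda_{n}$ to produce $\gamma_{n}\in\mathcal{S}_{p}$, and recover $\gamma$ in the limit by a standard compactness argument for Reeb orbits whose actions stay bounded, the bound coming from the ECH spectrum, which depends continuously on the contact form. Dynamical convexity is not literally preserved under perturbation, but the only thing the argument uses is the index lower bound $\mu_{\mathrm{disk}}\geq 3$ for contractible orbits below a fixed action threshold, and this persists after a sufficiently small perturbation.

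For nondegenerate $\lambda$ I would invoke the computation of $ECH_{*}(L(p,1),\xi_{\mathrm{std}})$. Via Taubes' isomorphism with Seiberg--Witten Floer cohomology (equivalently the combinatorial model for ECH of lens spaces), this group carries a canonical $\mathbb{Z}$-grading in each class $\Gamma\in H_{1}(L(p,1))=\mathbb{Z}/p$ and is nonzero in the distinguished lowest grading of the class $\Gamma_{0}$ generating $H_{1}$. Let $\sigma$ denote that distinguished class and $c_{\sigma}(\lambda)$ its ECH spectral invariant. Since $\sigma\neq 0$ and each graded piece is finitely generated, $c_{\sigma}(\lambda)<\infty$, and by definition of the spectrum there is an admissible orbit set $\alpha$ with $[\alpha]=\Gamma_{0}$, with $I$-grading equal to $\mathrm{gr}(\sigma)$, and with action $\mathcal{A}(\alpha)=c_{\sigma}(\lambda)$; this action is the ``first ECH spectrum'' appearing in the statement, and the conclusion will relate it to the period of $\gamma$.

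The core step is to pin down $\alpha$. I would expand the ECH index
\[
I(\alpha)=c_{\tau}(Z)+Q_{\tau}(Z)+\sum_{i}\sum_{k=1}^{m_{i}}\mathrm{CZ}_{\tau}(\alpha_{i}^{k})
\]
relative to the canonical filling of $L(p,1)$ and combine it with dynamical convexity $\mu_{\mathrm{disk}}\geq 3$, together with the index-iteration inequality controlling the iterates that become contractible. Because $\mathrm{gr}(\sigma)$ is the smallest grading available in the class $\Gamma_{0}$, every Conley--Zehnder contribution is forced to its minimal value and no second orbit or higher multiplicity can appear without overshooting the grading; this should force $\alpha=\{(\gamma,1)\}$ for a single simple orbit $\gamma$ representing $\Gamma_{0}$, so that $\gamma^{p}$ is contractible. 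Reading off the same identity, the relative self-intersection term $Q_{\tau}$ computes $\mathrm{sl}(\gamma)$, and the fact that $\alpha$ is an honest (embedded) ECH generator forces the writhe bound that makes $\gamma$ $p$-unknotted, yielding $\mathrm{sl}(\gamma)=-\tfrac{1}{p}$ and $\mu_{\mathrm{disk}}(\gamma^{p})=3$, i.e. $\gamma\in\mathcal{S}_{p}$.

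The hard part will be the multiple-cover bookkeeping in the third step: showing that the orbit set realizing the distinguished spectral invariant is exactly one simple orbit rather than a product of several short orbits or a higher iterate, and that its contractible iterate $\gamma^{p}$ attains the minimal disk index $3$. This is precisely where \cite{HrS,Sch} had to understand multiply-covered holomorphic curves in $(L(p,1),\xi_{\mathrm{std}})$; in the ECH framework the same difficulty reappears as the rigidity of the ECH index inequality under dynamical convexity, together with the control of the writhe and linking of the generator. The hypothesis $q=1$ enters exactly here, guaranteeing that the relevant covers behave well and that the index arithmetic relating $I(\gamma)$ in class $\Gamma_{0}$ to $\mu_{\mathrm{disk}}(\gamma^{p})$ closes up.
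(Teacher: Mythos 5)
This theorem is not proved in the paper at all: it is quoted from \cite{HWZ4,HrS,Sch}, where the argument goes through the compactification of a family of pseudoholomorphic curves coming from an ellipsoid cobordism together with a careful analysis of multiply covered curves in $(L(p,1),\xi_{\mathrm{std}})$. Your proposal is therefore a genuinely different route, and it is essentially the strategy the paper itself uses for $L(p,p-1)$; measured against that strategy it has a gap at its core. You try to extract simplicity, $p$-unknottedness, $\mathrm{sl}(\gamma)=-\tfrac{1}{p}$ and $\mu_{\mathrm{disk}}(\gamma^{p})=3$ purely from the index arithmetic of the orbit set realizing a spectral invariant, but an ECH generator is just a finite collection of Reeb orbits with multiplicities: nothing about its knot type or self-linking number can be read off from $I$, $Q_{\tau}$ and the Conley--Zehnder terms. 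In the paper (and in \cite{HrHuRa}) every topological conclusion comes from an actual embedded $J$-holomorphic curve: one first finds a generator $\alpha$ with $\langle U_{J,z}\alpha,\emptyset\rangle\neq 0$ (Lemma \ref{existenceofhol}), and it is the projection of the index-$2$ curve counted by the $U$-map that supplies the rational Seifert surface, the unknottedness, the self-linking number via asymptotic winding numbers, and the global surface of section (Lemma \ref{Umap}, Lemma \ref{selflinkingproof1}, Lemma \ref{muzukasiiyo2}, Theorem \ref{familyofhol}). Your proposal never produces such a curve, and it cannot be repaired by simply inserting the $U$-map, because you place the generator in the nonzero class $\Gamma_{0}$, where there is no $\emptyset$ to map to and no absolute grading $I(\cdot,\emptyset)$. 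The paper is forced to work in $\Gamma=0$, where the relevant generators are $(\gamma,p)$ or $(\gamma_{1},1)\cup(\gamma_{2},1)$, and it is exactly the second possibility that obstructs the conclusion $\mu_{\mathrm{disk}}(\gamma^{p})=3$ in general (Remark \ref{mainrem}(3)); you do not explain why this case, or a higher multiplicity, is excluded for $L(p,1)$.

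The reduction to the nondegenerate case is also not as innocent as you state. The paper flags precisely this point in Remark \ref{mainrem}(1): a perturbation of a dynamically convex form is in general only dynamically convex up to an action threshold, and Lemma \ref{emptyindex1} and Lemma \ref{existenceofhol} --- which need the index lower bound for \emph{all} orbits occurring in low-index generators and in broken limits of curves --- are not known to survive this; that is why the paper assumes strict convexity rather than mere dynamical convexity in the degenerate case. Moreover, even granting the nondegenerate statement, your limiting argument must rule out that the orbits $\gamma_{n}$ converge to a multiple cover of a simple orbit; this is the entire content of Section 5 and is the reason the paper can only handle $p=2,3,4,6$ there. A correct write-up would either have to import the ellipsoid-cobordism argument of \cite{HrS,Sch} after all, or supply the missing holomorphic curve and the missing control of the degenerate limit.
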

 
 The main result of this paper is to find a simple orbit $\gamma \in \mathcal{S}_{p}$ in $(L(p,p-1),\xi_{\mathrm{std}})$ by using Embedded contact homology and relate their periods  with the first ECH spectrum. In particular, $U$-map on ECH plays important roles in the proof.

\subsection{Main results}

According to \cite{HrHuRa}, the next Theorem holds.
\begin{them}\label{hutchings}\cite{HrHuRa}
    Assume that $(S^{3},\lambda)$ is dynamically convex  and  $c_{1}^{\mathrm{ECH}}(S^{3},\lambda)$ is the first ECH spectrum of $(S^{3},\lambda)$ (explained in the next subsection). Then
\begin{equation*}
    c_{1}^{\mathrm{ECH}}(S^{3},\lambda)=\inf_{\gamma\in \mathcal{S}_{1}}\int_{\gamma}\lambda
\end{equation*}
\end{them}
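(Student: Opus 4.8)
The plan is to prove the two inequalities separately, after first reducing to the non-degenerate case. Both sides of the claimed identity behave well under $C^{0}$-small perturbations of $\lambda$ (the left side by the standard continuity of the ECH spectrum, the right side because the persistence and actions of elements of $\mathcal{S}_{1}$ vary continuously), so I would fix a sequence of non-degenerate dynamically convex forms $\lambda_{n}\to\lambda$, prove the equality for each $\lambda_{n}$, and pass to the limit at the end. Throughout I use that $ECH_{*}(S^{3},\xi_{\mathrm{std}})$ is $\mathbb{Z}$ in every even non-negative degree and $0$ otherwise, with a distinguished generator $\sigma_{k}$ in degree $2k$ satisfying $U\sigma_{k}=\sigma_{k-1}$; by definition $c_{1}^{\mathrm{ECH}}$ is the infimum of the actions $L$ for which $\sigma_{1}$ lies in the image of the inclusion-induced map $ECH_{*}^{L}\to ECH_{*}$.

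For the inequality $c_{1}^{\mathrm{ECH}}(S^{3},\lambda)\le\inf_{\gamma\in\mathcal{S}_{1}}\int_{\gamma}\lambda$ I would take any $\gamma\in\mathcal{S}_{1}$ and exhibit $\sigma_{1}$ as represented below action $\int_{\gamma}\lambda$. By Theorem~\ref{fundament} with $p=1$, $\gamma$ bounds a disk-like global surface of section, which provides in the symplectization a finite-energy holomorphic plane asymptotic to $\gamma$. Dynamical convexity forces $\mu_{\mathrm{disk}}(\gamma)\ge3$, and in the sharpest case $\mu_{\mathrm{disk}}(\gamma)=3$ the generator $\{(\gamma,1)\}$ has ECH index $2$ and is elliptic, so it lies in grading $2$; counting the plane through a generic point shows $U[\gamma]=\sigma_{0}$, and since $U\colon ECH_{2}\to ECH_{0}$ is an isomorphism this gives $[\gamma]=\sigma_{1}$. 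Hence $\sigma_{1}$ is represented in $ECH^{L}$ for $L=\int_{\gamma}\lambda$, yielding the bound; for elements of higher index the same plane still witnesses $\sigma_{1}$ at an action no larger than $\int_{\gamma}\lambda$, so the inequality persists over all of $\mathcal{S}_{1}$.

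For the reverse inequality I would start from the generator $\alpha$ of lowest action representing $\sigma_{1}$, so $\int_{\alpha}\lambda=c_{1}^{\mathrm{ECH}}$ and $I(\alpha)=2$. The essential input is the $U$-map: since $U\sigma_{1}=\sigma_{0}\neq0$, choosing a generic point $z$ in the symplectization yields a $J$-holomorphic current of ECH index $2$ from $\alpha$ to $\emptyset$ through $z$. I would then run the ECH index and partition-condition analysis, using dynamical convexity to exclude hyperbolic orbits, multiply covered components, and orbit sets with more than one underlying simple orbit from the index-$2$ count; this forces $\alpha=\{(\gamma,1)\}$ for a single embedded elliptic orbit $\gamma$ with $\mu_{\mathrm{disk}}(\gamma)=3$. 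The current is then a single embedded plane, and as the marked point varies over $S^{3}$ the resulting family of planes foliates $S^{3}$ and projects to a disk-like global surface of section bounded by $\gamma$. Such a $\gamma$ is necessarily $1$-unknotted with $\mathrm{sl}(\gamma)=-1$, i.e. $\gamma\in\mathcal{S}_{1}$, whence $\inf_{\gamma\in\mathcal{S}_{1}}\int_{\gamma}\lambda\le\int_{\gamma}\lambda=c_{1}^{\mathrm{ECH}}$.

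The hard part is the middle step of the reverse inequality: upgrading the abstract nonvanishing $U\sigma_{1}=\sigma_{0}$ into a \emph{geometric} embedded global surface of section while simultaneously ruling out every competing index-$2$ configuration. This requires the full force of the ECH index inequality, the partition conditions at the positive ends, and automatic transversality for the relevant planes, and it is precisely here that dynamical convexity is used to guarantee the minimal-action index-$2$ class is carried by a single simple elliptic orbit. Finally, to return to a possibly degenerate $\lambda$ I would take the non-degenerate approximations $\lambda_{n}$, extract a convergent subsequence of the corresponding minimizers $\gamma_{n}\in\mathcal{S}_{1}$, and verify that the limit orbit still lies in $\mathcal{S}_{1}$ with action $c_{1}^{\mathrm{ECH}}(S^{3},\lambda)$, using semicontinuity of the ECH spectrum together with the compactness of the space of orbits with action below a fixed bound.
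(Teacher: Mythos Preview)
The paper does not give its own proof of this statement: Theorem~\ref{hutchings} is quoted as a result of Hryniewicz--Hutchings--Ramos \cite{HrHuRa}, a reference listed as ``in preparation,'' and is used only as a black box in the proof of Corollary~\ref{maincor}. So there is no proof in the paper to compare against.

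That said, your proposal is essentially the $p=1$ specialization of what the paper \emph{does} prove for $L(p,p-1)$ in Sections~3--5, and it is worth noting where your sketch is thinner than the paper's analogous arguments. First, for the inequality $c_{1}^{\mathrm{ECH}}\le\inf$, you assert that $(\gamma,1)$ represents $\sigma_{1}$ but never check that it is a \emph{cycle}; the paper needs a separate intersection-theoretic argument (Lemma~\ref{cocycle}) to show $\partial_{J}\alpha_{\gamma}=0$ in the $L(2,1)$ case, and the same issue arises here. Second, your treatment of $\gamma\in\mathcal{S}_{1}$ with $\mu_{\mathrm{disk}}(\gamma)>3$ is a handwave: such a $\gamma$ sits in ECH grading $>2$, and saying ``the same plane still witnesses $\sigma_{1}$'' does not explain why its action bounds $c_{1}^{\mathrm{ECH}}$ from above. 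One either needs to show such $\gamma$ represent higher $\sigma_{k}$ (and then invoke $c_{k}^{\mathrm{ECH}}\ge c_{1}^{\mathrm{ECH}}$), or argue separately that the infimum over $\mathcal{S}_{1}$ is already attained among index-$3$ orbits. Third, your reduction to the non-degenerate case asserts that ``persistence and actions of elements of $\mathcal{S}_{1}$ vary continuously,'' but this is exactly the delicate point: a sequence of simple orbits $\gamma_{n}\in\mathcal{S}_{1}(\lambda_{n})$ could in principle converge to a multiple cover, and ruling this out takes real work---compare the paper's Section~5, where even for $L(p,p-1)$ the author can only push the argument through for $p\in\{2,3,4,6\}$ (Remark~\ref{mainrem}(2)). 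For the reverse inequality your outline is closer to complete and tracks Lemmas~\ref{emptyindex1}, \ref{Umap}, \ref{selflinkingproof1} and~\ref{existenceofhol} in the $p=1$ case, where the two-orbit alternative in Lemma~\ref{Umap} does not arise.
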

Consider $(L(p,p-1),\lambda)$ with $\mathrm{Ker}\lambda=\xi_{\mathrm{std}}$. Since $\xi_{\mathrm{std}}$ is trivial, we can take a global symplectic trivialization $\tau_{0}:\xi_{\mathrm{std}}\to L(p,p-1)\times \mathbb{C}$. For an orbit $\gamma$, define $\mu(\gamma)$ as the Conley-Zehnder index with respect to the global trivialization. We note that if $\gamma$ is contractible, $\mu(\gamma)=\mu_{\mathrm{disk}}(\gamma)$.

Our main results are as follows.
\begin{them}\label{maintheorem}
\item[(A).] If $(L(2,1),\lambda)$ is strictly convex or non-degenerate dynamically convex, then
\begin{equation*}
    \inf_{\gamma\in \mathcal{S}_{2},\mu(\gamma)=1}\int_{\gamma}\lambda=\frac{1}{2}\,c_{1}^{\mathrm{ECH}}(L(2,1),\lambda).
\end{equation*}
Moreover there exists  $\gamma \in \mathcal{S}_{2}$ satisfying $\mu(\gamma)=1$ and
\begin{equation*}
    \int_{\gamma}\lambda= \frac{1}{2}\,c_{1}^{\mathrm{ECH}}(L(2,1),\lambda).
\end{equation*}
\item[(B).]Suppose that $(L(p,p-1),\lambda)$ is strictly convex or non-degenerate dynamically convex.  If $p=3$ or $4$ or $6$, there exists  $\gamma \in \mathcal{S}_{p}$ satisfying $\mu(\gamma)=1$ and
\begin{equation*}
    \int_{\gamma}\lambda\leq \frac{1}{2}\,c_{1}^{\mathrm{ECH}}(L(p,p-1),\lambda).
\end{equation*}
\item[(C).] Suppose that $(L(p,p-1),\lambda)$ is non-degenerate dynamically convex.. Then for any $p$, there exists  $\gamma \in \mathcal{S}_{p}$ satisfying $\mu(\gamma)=1$ and
\begin{equation*}
 \int_{\gamma}\lambda\leq \frac{1}{2}\,c_{1}^{\mathrm{ECH}}(L(p,p-1),\lambda).
\end{equation*}
\end{them}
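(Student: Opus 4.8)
The plan is to establish the non-degenerate dynamically convex case directly with Embedded contact homology and then to deduce the strictly convex statements by approximation. In the non-degenerate case I would work in the model $L(p,p-1)=\partial B(1)/\mathbb{Z}_p$: after a small non-degenerate perturbation the Reeb flow has two distinguished simple elliptic orbits $e_1,e_2$, the images of the Hopf circles $\{z_2=0\}$ and $\{z_1=0\}$, each covered $p$-to-$1$ by its lift, and every ECH generator is an orbit set $e_1^{a}e_2^{b}$. One computes $[e_1]=[e_2]$ to be a generator of $H_1(L(p,p-1))=\mathbb{Z}/p$ (this is the special feature of $q=p-1\equiv-1$), so $e_1^{a}e_2^{b}$ is contractible exactly when $a+b\equiv0\pmod p$, and the least action of a nonempty contractible orbit set is attained at $a+b=p$. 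To identify $c_1^{\mathrm{ECH}}(L(p,p-1),\lambda)$, recall it is the least action $L$ for which a class $\sigma\in ECH^{L}_{*}(L(p,p-1),\lambda,0)$ in the zero class has $U\sigma=[\emptyset]$; since $U$ drops the grading by $2$, such $\sigma$ sits in grading $2$, and because $L(p,p-1)$ is an $L$-space the zero-class ECH is $\mathbb{Z}$ in each even grading, so the grading-$2$ generator is the action-minimizing nonempty contractible orbit set. The core computation is to evaluate $I(e_1^{a}e_2^{b},\emptyset)=c_\tau+Q_\tau+\sum\mu_\tau$ using dynamical convexity ($\mu_{\mathrm{disk}}(e_i^{p})\geq3$) to control the terms $\mu_\tau(e_i^{k})=2\lfloor k\vartheta_i\rfloor+1$; this pins down the grading-$2$ generator and shows it is $\gamma^{p}$ for one of the simple orbits $\gamma\in\{e_1,e_2\}$, of total action $c_1^{\mathrm{ECH}}$.

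The geometric input is the $U$-map: there is an ECH-index-$2$ holomorphic curve in the symplectization from the $c_1$-generator $\gamma^{p}$ to $\emptyset$ through a generic point, which projected to $L(p,p-1)$ is precisely the disk-like global surface of section of Theorem~\ref{fundament} with binding $\gamma$. Hence $\gamma$ is $p$-unknotted with $\mathrm{sl}(\gamma)=-1/p$, i.e.\ $\gamma\in\mathcal{S}_p$, and the rotation-number bookkeeping of the previous step gives $\mu(\gamma)=1$. Since the generator is $\gamma^{p}$ with $p\geq2$,
\begin{equation*}
\int_{\gamma}\lambda=\frac{1}{p}\int_{\gamma^{p}}\lambda=\frac{1}{p}\,c_1^{\mathrm{ECH}}\leq\frac{1}{2}\,c_1^{\mathrm{ECH}},
\end{equation*}
with equality forced exactly when $p=2$. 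This gives part (C) for all $p$ and the existence half of (A). For the $\inf$-characterization and equality in (A) I would add the reverse estimate: any $\gamma\in\mathcal{S}_2$ with $\mu(\gamma)=1$ has $\gamma^{2}$ contractible with $\mu_{\mathrm{disk}}(\gamma^{2})=3$, and by Theorem~\ref{fundament} $\gamma^{2}$ bounds a disk-like global surface of section, whose curve realizes $U[\gamma^{2}]=[\emptyset]$; thus $[\gamma^2]$ is an admissible $\sigma$ and $2\int_\gamma\lambda\geq c_1^{\mathrm{ECH}}$, so $\inf_{\gamma}\int_\gamma\lambda\geq\tfrac12 c_1^{\mathrm{ECH}}$, matching the orbit just produced. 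This is the exact analogue, for $p=2$, of Theorem~\ref{hutchings}.

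For the strictly convex (possibly degenerate) cases in (A) and (B) I would approximate $\lambda$ by non-degenerate dynamically convex forms $\lambda_n\to\lambda$, apply the non-degenerate result to get $\gamma_n\in\mathcal{S}_p$ with $\mu(\gamma_n)=1$ and $\int_{\gamma_n}\lambda_n\leq\tfrac12 c_1^{\mathrm{ECH}}(\lambda_n)$, and pass to the limit using the $C^0$-continuity of $c_1^{\mathrm{ECH}}$ and compactness of orbits of uniformly bounded action. The delicate point is that the limiting orbit must stay in $\mathcal{S}_p$ with $\mu=1$ rather than split or become degenerate; controlling $\mu_{\mathrm{disk}}(\gamma^{k})$ for all iterates up to the $p$-th through the convexity estimates is where the integer arithmetic of the sums $\sum 2\lfloor k\vartheta\rfloor+1$ against the ECH index budget of $2$ closes up only for $p\in\{2,3,4,6\}$, which is the source of the restriction in part (B).

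I expect the main obstacle to be the combination of the index computation and the identification: pinning down the action-minimizing orbit set realizing $c_1^{\mathrm{ECH}}$ and proving that the curve furnished by the $U$-map descends to a genuine disk-like global surface of section bound by a \emph{simple} orbit $\gamma\in\mathcal{S}_p$ with $\mu(\gamma)=1$. This forces one to analyze multiply-covered holomorphic curves in the symplectization of the lens space --- the phenomenon already isolated in \cite{HrS,Sch} --- since the $U$-curve need not be somewhere injective, and one must rule out that $c_1^{\mathrm{ECH}}$ is realized instead by a mixed orbit set $e_1^{a}e_2^{b}$ carrying no admissible simple binding.
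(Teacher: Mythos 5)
Your proposal breaks down at the very first step. Theorem \ref{maintheorem} concerns an \emph{arbitrary} (non-degenerate) dynamically convex contact form $\lambda$ with $\mathrm{Ker}\lambda=\xi_{\mathrm{std}}$, and the ECH chain complex of such a $\lambda$ is generated by orbit sets of $X_{\lambda}$ itself, which in general has many simple Reeb orbits besides the two Hopf images. Only the homology $\mathrm{ECH}(Y,\xi,\Gamma)$ is invariant; the filtered complex, the action filtration, and hence the question of \emph{which orbit set of $X_{\lambda}$} realizes $c_{1}^{\mathrm{ECH}}(L(p,p-1),\lambda)$ all depend on $\lambda$. So computing $I(e_{1}^{a}e_{2}^{b},\emptyset)$ in the round model pins down nothing about a general $\lambda$; it only reproves the statement for small perturbations of $\lambda_{p,p-1}$. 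The paper instead proves structural lemmas valid for every non-degenerate dynamically convex form: Lemma \ref{conleyindex} and the index/partition analysis of Lemma \ref{Umap} show that \emph{any} ECH generator $\alpha$ with $\langle U_{J,z}\alpha,\emptyset\rangle\neq0$ must be either $(\gamma,p)$ with $\mu(\gamma)=1$, $\mu(\gamma^{p})=3$, or $(\gamma_{1},1)\cup(\gamma_{2},1)$ with $\mu(\gamma_{i})=1$, and Lemma \ref{existenceofhol} extracts such an $\alpha$ of action $\le c_{1}^{\mathrm{ECH}}$ from the $U$-map isomorphism. Note also that the second, ``mixed'' case is not something to be ruled out, as you suggest at the end: it can genuinely occur, and the hardest part of the paper (Theorem \ref{familyofhol}, Lemma \ref{muzukasiiyo2}, all of Section 6) is devoted to showing that in that case both $\gamma_{1},\gamma_{2}$ still lie in $\mathcal{S}_{p}$, by assembling the $S^{1}$-family of cylinders into a rational Seifert surface and applying the Bennequin inequality.

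Two further steps are underargued. For the reverse inequality in (A) you need $(\gamma,2)$ to define a \emph{nonzero class} in $\mathrm{ECH}_{2}$ before it can compete in the definition of $c_{1}^{\mathrm{ECH}}$; the existence of a disk-like global surface of section from Theorem \ref{fundament} does not by itself give $\partial_{J}(\gamma,2)=0$, nor does it exclude $(\gamma,2)\in\mathrm{Im}\,\partial_{J}$, nor is that disk a priori $J$-holomorphic for the chosen generic $J$. The paper needs Lemmas \ref{index2}, \ref{cocycle} (a nontrivial intersection-positivity argument), \ref{Umapnonzero}, \ref{index1bound} and \ref{nonzero} for exactly these points. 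Finally, your explanation of the restriction to $p\in\{3,4,6\}$ in (B) (``floor-function sums against the index budget'') is not where the restriction actually comes from: in the degenerate limit a generator $(\gamma_{n,1},1)\cup(\gamma_{n,2},1)$ can converge to $\gamma_{\infty}^{k_{1}}$ and $\gamma_{\infty}^{k_{2}}$ with $k_{1}+k_{2}=p$ and both $k_{i}$ prime to $p$ (Lemma \ref{mutuallyprime}), and one needs some $k_{i}=1$ to recover a simple orbit; this is automatic precisely for $p=2,3,4,6$ and fails already for $p=5$ via $(k_{1},k_{2})=(2,3)$.
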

\begin{rem}\label{mainrem}
\item[(1).] The reason why we impose the condition of strictly convex in degenerate case on the assumption is that we can take a sequence of non-degenerate dynamically convex contact forms $\lambda_{n}\to \lambda$ for any strictly convex contact form $\lambda$. In general, for any dynamically convex contact form $\lambda$, we can take a sequence of non-degenerate contact forms $\lambda_{n}\to \lambda$ such that $\lambda_{n}$ is dynamically convex up to some period. But  the author thinks that the difficult problem is that Lemma \ref{emptyindex1} and Lemma \ref{existenceofhol} do not work in this sequence.
    
    \item[(2).] In general,  in order to prove something under degeneracy, we usually consider it as a limiting case of non-degenerate objects.  The reason why we only consider $p=3,4,6$ in the Theorem \ref{maincor}  (C) is that otherwise the limit of simple orbits might not be  simple. See Section 5 for more details.
    \item[(3).] In contrast to Theorem \ref{original}, if $p\geq 3$ in $L(p,p-1)$, the orbit $\gamma$ is $\mu(\gamma)=1$ but $\mu(\gamma^{p})$ might not be equal to $3$. The author thinks that this relates to the difficulty of applying the original method in \cite{HWZ4} to $L(p,q)$ under $q\neq1$. Note that  Schneider mentions in \cite[Remark 1.6]{Sch} that even if we can find an orbit $\gamma \in \mathcal{S}_{p}$ in $L(p,q)$ under $q\neq 1$, $\mu(\gamma^{p})$ might not be equal to $3$.
\end{rem}
As an immediate application of Theorem \ref{maintheorem} and Theorem \ref{fundament}, we have 
\begin{Cor}
    If $(L(p,p-1),\lambda)$  is non-degenerate dynamically convex, then $X_{\lambda}$ admits a $p$-unknotted closed Reeb orbit $\gamma$ with $\mu(\gamma)=1$ which is the binding of a rational open book decomposition. Each page of the open book is a rational  global surface of section of disk type whose contact area is not larger than $\frac{p}{2}c_{1}^{\mathrm{ECH}}(L(p,p-1),\lambda)$. Moreover, if $(L(p,p-1),\lambda)$ is strictly convex and $p=2,3,4,6$, we can exclude the condition of non-degeneracy.
\end{Cor}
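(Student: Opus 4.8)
The plan is to read this statement as a direct repackaging of Theorem~\ref{maintheorem} together with Theorem~\ref{fundament}: all of the analytic content lives in Theorem~\ref{maintheorem}, and the corollary only combines it with the existence of the open book. First, in the non-degenerate dynamically convex case I would invoke Theorem~\ref{maintheorem}(C), which for every $p$ produces a simple orbit $\gamma\in\mathcal{S}_{p}$ with $\mu(\gamma)=1$ and $\int_{\gamma}\lambda\leq\frac{1}{2}\,c_{1}^{\mathrm{ECH}}(L(p,p-1),\lambda)$.

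Next I would unpack the definition of $\mathcal{S}_{p}$: membership means exactly that $\gamma$ is simple, $p$-unknotted, and has $\mathrm{sl}(\gamma)=-\frac{1}{p}$, which are precisely the hypotheses of Theorem~\ref{fundament}. Applying that theorem, $\gamma$ becomes the binding of a rational open book decomposition of $L(p,p-1)$ all of whose pages are disk-like global surfaces of section, and $\gamma^{p}$ bounds such a page. This yields the qualitative content of the corollary, namely the existence of a $p$-unknotted orbit with $\mu(\gamma)=1$ serving as the binding, together with the rational open book whose pages are disks.

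It remains to control the contact area of a page, and here I would just apply Stokes' theorem. For a page $u:\mathbb{D}\to L(p,p-1)$ the map $u|_{\partial\mathbb{D}}$ is a $p$-fold cover of the simple orbit $\gamma$, so
\begin{equation*}
\int_{\mathbb{D}}u^{*}d\lambda=\int_{\partial\mathbb{D}}u^{*}\lambda=\int_{\gamma^{p}}\lambda=p\int_{\gamma}\lambda\leq\frac{p}{2}\,c_{1}^{\mathrm{ECH}}(L(p,p-1),\lambda),
\end{equation*}
which is the asserted bound. The factor $p$ is entirely accounted for by the $p$-fold covering of the binding, and this is the one spot where some care is genuinely needed.

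For the final clause I would repeat the argument with Theorem~\ref{maintheorem}(C) replaced by the strictly convex statements: part (A) supplies the required $\gamma\in\mathcal{S}_{p}$ with $\mu(\gamma)=1$ and the same action bound when $p=2$, and part (B) does so for $p=3,4,6$, in both cases without assuming non-degeneracy. The subsequent appeal to Theorem~\ref{fundament} and the area estimate are then verbatim. Because every ingredient is already in hand, I do not expect a real obstacle; the proof is genuinely an assembly, with the only subtlety being the bookkeeping of the covering multiplicity in the $d\lambda$-area computation.
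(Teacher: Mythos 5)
Your proposal is correct and matches the paper's (implicit) argument: the corollary is stated there as an immediate consequence of Theorem \ref{maintheorem} and Theorem \ref{fundament}, exactly the assembly you describe, with the area bound following from Stokes' theorem applied to a disk page whose boundary traverses the simple binding orbit $p$ times.
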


\begin{Cor}\label{maincor}
Assume that $(L(p,p-1),\lambda)$ is strictly convex or  non-degenerate dynamically convex. Let  $\rho:S^{3}\to L(p,p-1)$ be the covering map. Then, 
\begin{equation*}
     c_{1}^{\mathrm{ECH}}(S^{3},\rho^{*}\lambda) \leq \frac{p}{2}c_{1}^{\mathrm{ECH}}(L(p,p-1),\lambda).
\end{equation*}
\end{Cor}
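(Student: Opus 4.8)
The plan is to manufacture, out of Theorem \ref{maintheorem}, a distinguished Reeb orbit downstairs, lift it through $\rho$ to an orbit in $\mathcal{S}_{1}(S^{3},\rho^{*}\lambda)$ whose action is $p$ times larger, and then feed that lift into the variational formula of Theorem \ref{hutchings}. Concretely, I would first invoke Theorem \ref{maintheorem} to obtain a simple orbit $\gamma\in\mathcal{S}_{p}(L(p,p-1),\lambda)$ with
\begin{equation*}
\int_{\gamma}\lambda\le\tfrac{1}{2}\,c_{1}^{\mathrm{ECH}}(L(p,p-1),\lambda).
\end{equation*}
In the non-degenerate dynamically convex case this is exactly part (C). In the strictly convex case it is parts (A)/(B) when $p\in\{2,3,4,6\}$; for the remaining $p$ I would choose non-degenerate dynamically convex $\lambda_{n}\to\lambda$ as in Remark \ref{mainrem}(1), run the entire argument for each $\lambda_{n}$, and pass to the limit at the very end using the $C^{0}$-continuity of the ECH spectrum (which also controls $c_{1}^{\mathrm{ECH}}(S^{3},\rho^{*}\lambda_{n})$, since $\rho^{*}\lambda_{n}\to\rho^{*}\lambda$).

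Next I would carry out the lifting. Because $\gamma$ is $p$-unknotted with $\mathrm{sl}(\gamma)=-\tfrac{1}{p}$, its class generates $\pi_{1}(L(p,p-1))\cong\mathbb{Z}/p$, so the monodromy along $K:=\mathrm{im}\,\gamma$ is a generator of the deck group; hence $\widetilde{K}:=\rho^{-1}(K)$ is a single embedded circle covering $K$ with degree $p$. Since $\rho$ is a local contactomorphism, the Reeb field of $\rho^{*}\lambda$ is $\rho$-related to $X_{\lambda}$, so $\widetilde{K}$ is the image of a simple Reeb orbit $\tilde\gamma$ of $\rho^{*}\lambda$, and traversing $\widetilde{K}$ once covers $\gamma$ exactly $p$ times, giving $\int_{\tilde\gamma}\rho^{*}\lambda=p\int_{\gamma}\lambda$. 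Then I would verify $\tilde\gamma\in\mathcal{S}_{1}(S^{3},\rho^{*}\lambda)$. Lift the immersed disk $u$ defining the $p$-unknottedness of $\gamma$ to $\tilde u:\mathbb{D}\to S^{3}$ (possible since $\mathbb{D}$ is simply connected); as $\rho$ is a local diffeomorphism and $u|_{\mathrm{int}(\mathbb{D})}$ is embedded, $\tilde u|_{\mathrm{int}(\mathbb{D})}$ is again embedded, and from $\rho|_{\widetilde{K}}\circ\tilde u|_{\partial\mathbb{D}}=u|_{\partial\mathbb{D}}$ together with $\deg(\rho|_{\widetilde{K}})=p$ one gets $\deg(\tilde u|_{\partial\mathbb{D}})=1$, so $\widetilde{K}$ is $1$-unknotted.

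The computation of $\mathrm{sl}(\widetilde{K})$ is the crux, and I expect it to be the main obstacle. The idea is to exploit the deck symmetry together with the disk-independence of the self-linking number. Let $g$ generate the deck group; each translate $g^{m}\tilde u$ is again an admissible disk for $\widetilde{K}$ (its interior is embedded and its boundary covers $g^{m}(\widetilde{K})=\widetilde{K}$ with degree $1$), so by independence of $\mathrm{sl}$ of the chosen disk we have $\mathrm{sl}(\widetilde{K})=\tilde\gamma_{\epsilon}\cdot g^{m}\tilde u$ for every $m$, where $\tilde\gamma_{\epsilon}$ is the transverse pushoff of $\widetilde{K}$. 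On the other hand, $\tilde\gamma_{\epsilon}$ is the lift of the downstairs pushoff $\gamma_{\epsilon}$ (a single closed loop, since $\gamma_{\epsilon}\simeq\gamma^{p}$ is contractible), and projecting intersection points through the orientation-preserving cover $\rho$ identifies the intersections of $\tilde\gamma_{\epsilon}$ with $\rho^{-1}(u(\mathbb{D}))=\bigcup_{m}g^{m}\tilde u$ bijectively and sign-preservingly with those of $\gamma_{\epsilon}$ with $u(\mathbb{D})$. Hence
\begin{equation*}
\sum_{m=0}^{p-1}\bigl(\tilde\gamma_{\epsilon}\cdot g^{m}\tilde u\bigr)=\gamma_{\epsilon}\cdot u=p^{2}\,\mathrm{sl}(\gamma)=-p,
\end{equation*}
and since all $p$ summands are equal we conclude $\mathrm{sl}(\widetilde{K})=-1$, i.e.\ $\tilde\gamma\in\mathcal{S}_{1}(S^{3},\rho^{*}\lambda)$. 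I would expect the points needing the most care here to be the connectedness of $\rho^{-1}(K)$ (that $[\gamma]$ has order exactly $p$), the claim that $\tilde\gamma_{\epsilon}$ is the lift of $\gamma_{\epsilon}$, and the sign-preservation under $\rho$.

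Finally I would close the argument. Dynamical convexity passes to finite covers, so $(S^{3},\rho^{*}\lambda)$ is dynamically convex (strict convexity of $(L(p,p-1),\lambda)$ even makes $(S^{3},\rho^{*}\lambda)$ strictly convex via the equivariant embedding $i$), and Theorem \ref{hutchings} applies to give
\begin{equation*}
c_{1}^{\mathrm{ECH}}(S^{3},\rho^{*}\lambda)=\inf_{\gamma'\in\mathcal{S}_{1}}\int_{\gamma'}\rho^{*}\lambda\le\int_{\tilde\gamma}\rho^{*}\lambda=p\int_{\gamma}\lambda\le\tfrac{p}{2}\,c_{1}^{\mathrm{ECH}}(L(p,p-1),\lambda),
\end{equation*}
which is the desired inequality; in the strictly convex case with $p\notin\{2,3,4,6\}$ this follows by applying the bound to the approximants $\lambda_{n}$ and letting $n\to\infty$.
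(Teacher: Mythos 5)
Your proposal is correct and follows essentially the same route as the paper: extract the orbit $\gamma\in\mathcal{S}_{p}$ with $\int_{\gamma}\lambda\le\tfrac12 c_{1}^{\mathrm{ECH}}(L(p,p-1),\lambda)$ from Theorem \ref{maintheorem}, lift it to $\tilde\gamma\in\mathcal{S}_{1}(S^{3},\rho^{*}\lambda)$ with $p$ times the action, apply Theorem \ref{hutchings}, and handle the strictly convex (possibly degenerate) case by approximating with non-degenerate forms and using continuity of the ECH spectrum. The only difference is that you carefully justify the step the paper dismisses as ``obvious'' (connectedness of $\rho^{-1}(K)$, $1$-unknottedness of the lift, and the deck-symmetry computation of $\mathrm{sl}(\tilde\gamma)=-1$), which is a welcome elaboration rather than a deviation.
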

\begin{proof}[\bf Proof of Corollary \ref{maincor}]
If non-degenerate, it follows   from Theorem \ref{maintheorem} that  there is $\gamma\in \mathcal{S}_{p}(L(p,p-1),\lambda)$ with $\mu(\gamma)=1$ and $\int_{\gamma}\lambda\leq\frac{1}{2} c_{1}^{\mathrm{ECH}}(L(p,p-1),\lambda)$.
Let $\Tilde{\gamma}$ be the periodic orbit in $(S^{3},\rho^{*}\lambda_{n})$ such that $\rho|_{\Tilde{\gamma}}:\Tilde{\gamma}\to \gamma$ is a $p$-fold  cover. It is obvious that $\Tilde{\gamma}\in \mathcal{S}_{1}(S^{3},\rho^{*}\lambda)$. This implies that from Theorem \ref{hutchings},
\begin{equation}
     c_{1}^{\mathrm{ECH}}(S^{3},\rho^{*}\lambda)=\inf_{\gamma\in \mathcal{S}_{1}}\int_{\gamma}\lambda \leq \int_{\Tilde{\gamma}}\rho^{*}\lambda=p\int_{\gamma}\lambda \leq \frac{p}{2} c_{1}^{\mathrm{ECH}}(L(p,p-1),\lambda).
\end{equation}
Therefore, we have $c_{1}^{\mathrm{ECH}}(S^{3},\rho^{*}\lambda) \leq \frac{p}{2} c_{1}^{\mathrm{ECH}}(L(p,p-1),\lambda)$.

If strictly convex, we consider a sequence of strictly convex 
and non-degenerate contact forms $\lambda_{n}$ such that $\mathrm{Ker}\lambda_{n}=\xi_{\mathrm{std}}$ and $\lambda_{n}\to \lambda$ in $C^{\infty}$-topology.  Then it follows from the property of ECH spectrum that  $c_{1}^{\mathrm{ECH}}(S^{3},\rho^{*}\lambda) \leq \frac{p}{2} c_{1}^{\mathrm{ECH}}(L(p,p-1),\lambda)$ as the limit. We complete the proof.
\end{proof}

\begin{rem}
The Weyl law with respact to ECH spectrum proved in \cite{CHR} says  $\lim_{k\to \infty}\frac{c_{k}^{\mathrm{ECH}}(S^{3},\rho^{*}\lambda)^{2}}{k}=2\mathrm{Vol}(S^{3},\rho^{*}\lambda)$ and $\lim_{k\to \infty}\frac{c_{k}^{\mathrm{ECH}}(L(p,p-1),\lambda)^{2}}{k}=2\mathrm{Vol}(L(p,p-1),\lambda)$. This implies that
\begin{equation}
    \lim_{k\to \infty}\frac{c_{k}^{\mathrm{ECH}}(S^{3},\rho^{*}\lambda)}{c_{k}^{\mathrm{ECH}}(L(p,p-1),\lambda)}=\sqrt{\frac{\mathrm{Vol}(S^{3},\rho^{*}\lambda)}{\mathrm{Vol}(L(p,p-1),\lambda)}}=\sqrt{p}.
\end{equation} 
But if $\lambda$ is strictly  convex, it follows  from  Corollary \ref{maincor} that $\frac{c_{1}^{\mathrm{ECH}}(S^{3},\rho^{*}\lambda)}{c_{1}^{\mathrm{ECH}}(L(p,p-1),\lambda)}\leq \frac{p}{2}$. This implies that if $p=2,3$,   the ratio is not close to $\sqrt{p}$ when $k=1$.
\end{rem}

Theorem \ref{maintheorem} (A) partially answers the question \cite[Question 1, Remark 1.9]{Fe}. In connection with this, we  conjecture the following.
\begin{Que}
Suppose that $(L(2,1),\lambda)$ is strictly convex. Is the periodic orbit with minimum period  in  $\mathcal{S}_{2}(L(2,1),\lambda)$?
\end{Que}

\subsection{Idea and outline of this paper}
The basic idea is to find the orbits   from the algenbraic structure of ECH called $U$-map recalled in the next section. From the behaviors of $J$-holomorphic curves and algebraic structure, we can find an ECH generator $\alpha$ satisfying $\langle U_{J,z}\alpha,\emptyset \rangle \neq 0$ and $A(\alpha)\leq c_{1}^{\mathrm{ECH}}(L(p,p-1),\lambda)$. In addition, we can see that if $\langle U_{J_{z}}\alpha,\emptyset \rangle \neq 0$, $\alpha$ is described as either
\begin{itemize}
    \item[(1).] $\alpha=(\gamma,p)$ with $\gamma\in \mathcal{S}_{p}$, $\mu(\gamma^{p})=3$ and $\mu(\gamma)=1$, or
    \item[(2).]$\alpha=(\gamma_{1},1)\cup{(\gamma_{2},1)}$ with $\gamma_{1}, \gamma_{2}\in \mathcal{S}_{p}$, $\mu(\gamma_{1})=\mu(\gamma_{2})=1$.
\end{itemize}

In Section 2, we recall Conley-zehnder index and ECH. In Section 3, we focus on non-degenerate cases and study the behaviors of Conley-Zehnder index and $J$-holomorphic curves. In Section 4, we show the above properties under non-degenerate cases. In particular, we complete the proof for non-degenerate cases. In Section 5, we extend the result of Sectioon 4 to degenerate cases under strictly convex and $p=2,3,4,6$. In Section 6, we prove Theorem \ref{familyofhol} used in Section 3. 
\subsection*{Acknowledgement}
The author would like to thank his advisor Professor Kaoru Ono for his encouragement and support, U. L. Hryniewicz for his encouraging comment and Brayan Ferreira for his pointing out the relation with his work. The author also would like to  thank Michael Hutching for sharing the information of \cite{HrHuRa} and  being so kind to me when the author stayed at Berkeley.
This work was supported by JSPS KAKENHI Grant Number JP21J20300.

\section{Preliminaries}
\subsection{Conley-Zehnder index}
In this subsection, we recall Conley-zehnder index and its properties introduced in \cite{HWZ2}. The contents in this subsection are based on \cite{HWZ2,HWZ4}.

Consider a smooth arc $[0,1]\ni t\mapsto S(t)$ of symmetric matrices. Then we can define a self-adjoint operator $L_{S}$ in $L^{2}(S^{1},\mathbb{C})$ whose domain is $W^{1,2}(S^{1},\mathbb{C})$ by
\begin{equation}
    L_{S}=-i\partial_{t}-S(t).
\end{equation}
Since $W^{1,2}(S^{1},\mathbb{C}) \hookrightarrow L^{2}(S^{1},\mathbb{C})$ is compact, its spectrum $\sigma(L_{S})$ consists of eigenvalues of $L_{S}$ and is in $\mathbb{R}$. In addition, $\sigma(L_{S})$ is a countable set and has no accumulation point other than $\pm \infty$. For $\eta \in \sigma(L_{S})$, take an eigenfunction $e_{\eta}:S^{1}\to \mathbb{C}$. 
Let $w(S,\eta)\in \mathbb{Z}$ denote the winding number of $e_{\eta}$.  Note that $w(S,\eta)$  is independent of $e_{\eta}$. If $\eta_{1}\leq \eta_{2}$, then $w(S,\eta_{1})\leq w(S,\eta_{2})$. Moreover for any $k\in \mathbb{Z}$, there are precisely two eigenvalues (multiplicities counted) with winding number $k$. In particular, this means that $w(S,\cdot):\sigma(L_{S})\to \mathbb{Z}$ is monotone and surjective

 Consider a a smooth path $\varphi:\mathbb{R}\to Sp(1)$ in 2-dimensional symplectic matrices with $\varphi(0)=id$ and $\varphi(t+1)=\varphi(t)\varphi(1)$ for any $t\in \mathbb{R}$. Define a smooth arc $[0,1]\ni t\mapsto S_{\varphi}(t)$ of symmetric matrices by $S_{\varphi}(t)=-i\dot{\varphi}(t)\varphi(t)$.

\begin{dfn}
    For $\varphi$ as above,  let $\eta^{<}:=\mathrm{max}\{ \sigma(L_{S_{\phi}})\cap{(-\infty,0)}\}$ and $\eta^{\geq}:=\mathrm{min}\{ \sigma(L_{S_{\varphi}})\cap{[0,+\infty)}\}$. The Conley-Zehnder index of $\varphi$ is defined as
    \begin{equation}
    \mu_{CZ}(\phi):=w(S_{\varphi},\eta^{<})+w(S_{\varphi},\eta^{\geq}).
    \end{equation}
\end{dfn}
We note that it is obvious that $ \mu_{CZ}$ is lower semi-continuous.

For $k\in \mathbb{Z}_{>0}$, define $\rho_{k}:\mathbb{R} \to \mathbb{R}$ as $t \mapsto kt$. The next proposition is used in Section 5.
\begin{prp}\label{conleycovering}
    Consider a smooth path $\varphi:\mathbb{R}\to Sp(1)$ in  symplectic matrices with $\varphi(0)=id$ and $\varphi(t+1)=\varphi(t)\varphi(1)$ for any $t\in \mathbb{R}$.
    \item[(1).] If $\mu_{CZ}(\varphi)=2n$ for $n\in \mathbb{Z}$, then $\mu_{CZ}(\varphi\circ \rho_{k})=2kn$ for every $k\in \mathbb{Z}_{>0}$.
       \item[(2).] If $\mu_{CZ}(\varphi)\geq3$, then $\mu_{CZ}(\varphi\circ \rho_{k})\geq 2k+1$ for every $k\in \mathbb{Z}_{>0}$.
\end{prp}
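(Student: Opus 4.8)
The plan is to reduce both parts to the behaviour of a single rotation number of $\varphi$ under the iteration $\varphi\mapsto\varphi\circ\rho_k$, and then to read off the indices from the dictionary relating $\mu_{CZ}$ to that rotation number. First I would record that $\varphi\circ\rho_k$ is genuinely the $k$-th iterate of $\varphi$: from $\varphi(t+1)=\varphi(t)\varphi(1)$ one gets $(\varphi\circ\rho_k)(t+1)=(\varphi\circ\rho_k)(t)\,\varphi(1)^{k}$ and $(\varphi\circ\rho_k)(1)=\varphi(1)^{k}$, so its monodromy is $\varphi(1)^{k}$ and its lift to the universal cover $\widetilde{Sp(1)}$ is the $k$-th power of the lift of $\varphi$. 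At the level of the associated arcs of symmetric matrices this is the identity $S_{\varphi\circ\rho_k}(t)=k\,S_\varphi(kt)$; writing $\Psi_\mu$ for the fundamental solution of $-i\dot u=(S_\varphi(t)+\mu)u$ with $\Psi_\mu(0)=\mathrm{id}$, one has $\Psi_0=\varphi$, and $\mu\in\sigma(L_{S_\varphi})$ precisely when $\Psi_\mu(1)$ has the eigenvalue $1$, so that the spectral data of the iterate are controlled by the powers $\Psi_\mu(1)^{k}$.

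Next I would attach to $\varphi$ its rotation number $\hat\rho(\varphi)\in\mathbb{R}$, the homogeneous real invariant of the lift of $\varphi$ to $\widetilde{Sp(1)}$ normalised so that the full-rotation loop $t\mapsto e^{2\pi i t}$ (acting on $\mathbb{C}\cong\mathbb{R}^{2}$) has $\hat\rho=1$; it is the quantity about which the winding numbers $w(S_\varphi,\eta^{<})$ and $w(S_\varphi,\eta^{\ge})$ are organised. The two inputs I need, both standard for Conley--Zehnder indices in dimension two, are the dictionary
\[
\mu_{CZ}(\varphi)=
\begin{cases}
2\hat\rho(\varphi), & \operatorname{tr}\varphi(1)>2,\\
2\hat\rho(\varphi)-1, & \operatorname{tr}\varphi(1)=2,\\
2\lfloor\hat\rho(\varphi)\rfloor+1, & \operatorname{tr}\varphi(1)<2,
\end{cases}
\]
where $\hat\rho(\varphi)\in\mathbb{Z}$ in the first two lines and $\hat\rho(\varphi)\notin\mathbb{Z}$ in the third, together with the homogeneity $\hat\rho(\varphi\circ\rho_k)=k\,\hat\rho(\varphi)$, which combines the homogeneity of a translation number with the power identity of the previous paragraph. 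I will also use that $\operatorname{tr}\varphi(1)>2$ is preserved under taking powers (a positive hyperbolic matrix has positive hyperbolic powers).

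Granting these, part (1) is immediate: an even value $2n$ cannot occur in the second or third line (both are odd), so $\operatorname{tr}\varphi(1)>2$ and $\hat\rho(\varphi)=n\in\mathbb{Z}$; then $\operatorname{tr}\varphi(1)^{k}>2$ as well, $\hat\rho(\varphi\circ\rho_k)=kn\in\mathbb{Z}$, and the first line gives $\mu_{CZ}(\varphi\circ\rho_k)=2kn$. For part (2), the hypothesis $\mu_{CZ}(\varphi)\ge3$ forces $\hat\rho(\varphi)>1$ in every case: if $\operatorname{tr}\varphi(1)\ge2$ the integer $\hat\rho(\varphi)$ satisfies $2\hat\rho(\varphi)\ge4$, hence $\hat\rho(\varphi)\ge2$, while if $\operatorname{tr}\varphi(1)<2$ then $\lfloor\hat\rho(\varphi)\rfloor\ge1$ with $\hat\rho(\varphi)\notin\mathbb{Z}$, hence $\hat\rho(\varphi)>1$. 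By homogeneity $\hat\rho(\varphi\circ\rho_k)=k\,\hat\rho(\varphi)>k$, and applying the dictionary to $\varphi\circ\rho_k$: when $k\hat\rho(\varphi)\notin\mathbb{Z}$ one has $\lfloor k\hat\rho(\varphi)\rfloor\ge k$, so $\mu_{CZ}(\varphi\circ\rho_k)=2\lfloor k\hat\rho(\varphi)\rfloor+1\ge2k+1$; when $k\hat\rho(\varphi)\in\mathbb{Z}$ one has $k\hat\rho(\varphi)\ge k+1$, so $\mu_{CZ}(\varphi\circ\rho_k)\in\{2k\hat\rho(\varphi),\,2k\hat\rho(\varphi)-1\}$, which is $\ge2(k+1)-1=2k+1$. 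In either case $\mu_{CZ}(\varphi\circ\rho_k)\ge2k+1$.

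I expect the main obstacle to be not this arithmetic but the rigorous justification of the boxed dictionary and of the homogeneity \emph{directly from the winding-number definition} taken as primary in the paper, rather than from an abstract quasimorphism. One must identify $\hat\rho(\varphi)$ with the winding data, showing that $\mu_{CZ}(\varphi)$ is even exactly when the two eigenvalues of $L_{S_\varphi}$ straddling $0$ carry the same winding number, and verify that passing to the $k$-fold cover multiplies the rotation by $k$ while preserving the monotone, two-eigenvalues-per-integer structure of $w(S_\varphi,\cdot)$. The genuinely delicate point is the degenerate line $\operatorname{tr}\varphi(1)=2$: there $0\in\sigma(L_{S_\varphi})$, so $\eta^{\ge}=0$, and the lower semicontinuity of $\mu_{CZ}$ forces the value $2\hat\rho(\varphi)-1$ rather than $2\hat\rho(\varphi)$; this is also why part (1) must be phrased for \emph{even} indices, since evenness is precisely what rules out the degenerate and elliptic regimes and pins $\varphi(1)$ to the positive hyperbolic locus that is stable under iteration.
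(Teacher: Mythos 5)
Your first step (pulling back eigenfunctions along the $k$-fold cover, $S_{\varphi\circ\rho_k}(t)=kS_\varphi(kt)$, so that $\sigma(L_{S_\varphi})\subset\sigma(L_{S_{\varphi\circ\rho_k}})$ with windings multiplied by $k$) is exactly the paper's first step, and your part (2) is a reasonable sketch of the standard fact that the paper simply cites from \cite{HWZ4}. The problem is part (1), where you route through a rotation-number dictionary that is stated incorrectly in the degenerate case, and the error is load-bearing. When $1$ is an eigenvalue of $\varphi(1)$ (a nontrivial shear with $\operatorname{tr}\varphi(1)=2$), the index used here is $w(S_\varphi,\eta^<)+w(S_\varphi,0)$ with $\eta^{\geq}=0$, and depending on whether the second eigenvalue carrying the winding $n=w(S_\varphi,0)$ lies below or above $0$, this equals $2n$ or $2n-1$; equivalently, the lower semicontinuous index of a shear is $2n$ for one shear direction and $2n-1$ for the other. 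So your claim that an even value forces $\operatorname{tr}\varphi(1)>2$ is false: degenerate paths with even index exist, your case analysis excludes them for no reason, and for those paths your argument says nothing about $\mu_{CZ}(\varphi\circ\rho_k)$ (one would have to check that the direction of the shear, which decides between $2kn$ and $2kn-1$, is preserved under taking powers). This is not a vacuous corner case: the paper invokes part (1) in Claim \ref{limittingorbit} for a limit orbit that may well be degenerate, which is presumably why the statement is phrased for possibly degenerate $\varphi$ in the first place.

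The paper's proof of (1) avoids the dictionary entirely and handles the degenerate case uniformly: evenness of $w(S_\varphi,\eta^<)+w(S_\varphi,\eta^{\geq})$ forces the two extremal eigenvalues around $0$ to carry the \emph{same} winding $n$ (using that consecutive windings differ by at most $1$); these two eigenvalues persist in $\sigma(L_{S_{\varphi\circ\rho_k}})$ with winding $kn$, and since the iterated operator also has exactly two eigenvalues per winding number and its winding map is monotone, they must remain adjacent, hence remain the extremal eigenvalues $\eta^<$ and $\eta^{\geq}$ for the iterate, giving $kn+kn=2kn$. If you want to keep your rotation-number formulation, you must either add the degenerate line correctly (two possible values $2\hat\rho$ and $2\hat\rho-1$, distinguished by the shear direction) and prove that this direction is stable under powers, or simply replace the dictionary argument by the spectral one above.
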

\begin{proof}[\bf Proof of Proposition \ref{conleycovering}]
    Let $e_{\eta}:S^{1}\to \mathbb{C}$ be an eigenfunction of $L_{S_{\varphi}}$ with eigenvalue $\eta \in \sigma(L_{S_{\varphi}})$. Then it is obvious that $L_{S_{\varphi\circ \rho_{k}}}e_{\eta}(kt)=\eta e_{\eta}(kt)$. Therefore $\sigma(L_{S_{\varphi}})\subset \sigma(L_{S_{\varphi\circ \rho_{k}}})$ and $w(S_{\varphi \circ \rho_{k}},\eta)=kw(S_{\varphi},\eta)$ for every $\eta \in \sigma(L_{S_{\varphi}})$. 
    In addition, we note that $w(S_{\phi},\eta^{<})=w(S_{\varphi},\eta^{\geq})$ or $w(S_{\phi},\eta^{<})+1=w(S_{\varphi},\eta^{\geq})$ because the map $w(S_{\varphi},\cdot):\sigma(L_{S_{\varphi}})\to \mathbb{Z}$ is monotone and surjective.

        Supppose that $\mu_{CZ}(\varphi)=2n$ for $n\in \mathbb{Z}$. Since $\mu_{CZ}(\varphi)=w(S_{\varphi},\eta^{<})+w(S_{\varphi},\eta^{\geq})$ is even, we have $w(S_{\varphi},\eta^{<})=w(S_{\varphi},\eta^{\geq})=n$. Fix $k\in \mathbb{Z}_{>0}$. Then we have $\eta^{<}:=\mathrm{max}\{ \sigma(L_{S_{\varphi\circ \rho_{k}}})\cap{(-\infty,0)}\}=\mathrm{max}\{ \sigma(L_{S_{\varphi}})\cap{(-\infty,0)}\}$ and $\eta^{\geq}:=\mathrm{min}\{ \sigma(L_{S_{\varphi\circ \rho_{k}}})\cap{[0,+\infty)}\}=\mathrm{min}\{ \sigma(L_{S_{\varphi}})\cap{[0,+\infty)}\}$. This follows easily from  $w(S_{\varphi\circ \rho_{k}},\eta^{<})=w(S_{\varphi\circ \rho_{k}},\eta^{\geq})=kn$ and the monotonicity of $w(S_{\varphi \circ \rho_{k}},\cdot)$. Therefore we have $\mu_{CZ}(\varphi\circ \rho_{k})=2kn$.

For the proof of (2), see \cite[Theorem 3.6]{HWZ4}.
\end{proof}

Consider a periodic orbit $\gamma:\mathbb{R}/T_{\gamma}\mathbb{Z}\to Y$ of $(Y,\lambda)$ and  a symplectic trivialization $\tau:\gamma^{*}\xi \to \mathbb{R}/T_{\gamma}\mathbb{Z} \times \mathbb{C}$. Then we have a symplectic path $ \mathbb{R}\ni t\mapsto \phi_{\gamma,\tau}(t):=\tau(\gamma(T_{\gamma}t)) \circ d\phi^{tT_{\gamma}}|_{\xi}\circ \tau^{-1}(\gamma(0))$ which satisfies $\phi_{\gamma,\tau}(t+1)=\phi_{\gamma,\tau}(t)\phi_{\gamma,\tau}(0) $ for any $t\in \mathbb{R}$.

Now, we define the Conley-Zender index of $\gamma$ with respect to a trivialization $\tau$ as
\begin{equation}
    \mu_{\tau}(\gamma):=\mu_{CZ}(\phi_{\gamma,\tau}).
\end{equation}
Note that  $ \mu_{\tau}$ is independent of the choice of a trivialization in the same homotopy class of $\tau$.

Consider a periodic orbit $\gamma$. If the eigenvalues of the return map $d\phi^{T_{\gamma}}|_{\xi}:\xi_{\gamma(0)}\to\xi_{\gamma(0)}$ are positive (resp. negative) real, $\gamma$ is called positive (resp. negative) hyperbolic. If the eigenvalues of the return map $d\phi^{T_{\gamma}}|_{\xi}:\xi_{\gamma(0)}\to\xi_{\gamma(0)}$ are on theunit circle in $\mathbb{C}$, $\gamma$ is called elliptic.

If $\gamma^{n}$ is non-degenerate for every $n \in \mathbb{Z}_{>0}$, it is well-known that the Conley-Zehnder index behave good as follows. 

\begin{prp}
    Let $\gamma$ be a orbit such that $\gamma^{n}$ is non-degenerate for every $n\in \mathbb{Z}_{>0}$.  Fix a trivialization $\tau$ of the contact plane over $\gamma$. Consider the Conley-Zehnder indices of the multiple covers with respect to $\tau$. Write $
    \mu_{\tau}(\gamma^{n}):=\mu_{CZ}(\phi_{\gamma,\tau}\circ \rho_{n})$.
    \item[(1).] If $\gamma$ is hyperbolic, $\mu_{\tau}(\gamma^{n})=n\mu_{\tau}(\gamma)$ for every $n\in \mathbb{Z}_{>0}$.
    \item[(2).] If $\gamma$ is elliptic, there is $\theta \in \mathbb{R}\backslash \mathbb{Q}$ such that $\mu_{\tau}(\gamma^{n})=2 \lfloor n\theta \rfloor +1$ for every $n \in \mathbb{Z}_{>0}$.

    We call $\theta$ the monodromy angle of $\gamma$.
\end{prp}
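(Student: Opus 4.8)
The plan is to exploit the homotopy invariance of $\mu_{CZ}$ together with the classification of non-degenerate symplectic paths in $Sp(1)$ to reduce each case to an explicit normal form, on which the eigenfunction description of $\mu_{CZ}$ can be evaluated by hand; the behaviour under iteration is then governed by the multiplicativity of the mean rotation number. Write $\varphi := \phi_{\gamma,\tau}$, so that $\mu_\tau(\gamma^n) = \mu_{CZ}(\varphi\circ\rho_n)$ by definition. I recall from \cite{HWZ2} that $\mu_{CZ}$ is unchanged under homotopies of paths $\varphi$ with $\varphi(0)=\mathrm{id}$ that keep the endpoint $\varphi(1)$ non-degenerate, and that each homotopy class is detected, through the universal cover of $Sp(1)$, by a real mean rotation number $\rho(\varphi)$ which is multiplicative under iteration, $\rho(\varphi\circ\rho_n)=n\,\rho(\varphi)$.

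Suppose first that $\gamma$ is elliptic. Non-degeneracy of every iterate forces the eigenvalues of the return map to be $e^{\pm 2\pi i\theta}$ with $\theta\in\mathbb{R}\setminus\mathbb{Q}$, since $n\theta\in\mathbb{Z}$ for some $n$ would give $\gamma^n$ the eigenvalue $1$. The real lift provided by the universal cover pins down $\theta$ as a genuine real number, and by homotopy invariance I may replace $\varphi$ by the rigid rotation $\varphi_\theta(t)=e^{2\pi i\theta t}$; this $\theta$ is the monodromy angle. For $\varphi_\theta$ one has $S_{\varphi_\theta}\equiv 2\pi\theta$, so $L_{S_{\varphi_\theta}}=-i\partial_t-2\pi\theta$ has eigenfunctions $e^{2\pi ikt}$ $(k\in\mathbb{Z})$ with eigenvalue $2\pi(k-\theta)$ and winding number $k$. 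Hence the eigenvalues straddling $0$ are $\eta^<=2\pi(\lfloor\theta\rfloor-\theta)$ and $\eta^{\geq}=2\pi(\lfloor\theta\rfloor+1-\theta)$, with winding numbers $\lfloor\theta\rfloor$ and $\lfloor\theta\rfloor+1$, so that $\mu_{CZ}(\varphi_\theta)=2\lfloor\theta\rfloor+1$. Since $\varphi_\theta\circ\rho_n(t)=e^{2\pi i(n\theta)t}=\varphi_{n\theta}(t)$, the same computation gives $\mu_\tau(\gamma^n)=2\lfloor n\theta\rfloor+1$, proving (2).

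Suppose now that $\gamma$ is hyperbolic, so the return map has real eigenvalues $\lambda,\lambda^{-1}$ with $\lambda\neq\pm 1$; then $\lambda^n\neq 1$ for all $n$ and every iterate is automatically non-degenerate. Here $\rho(\varphi)\in\tfrac12\mathbb{Z}$: it is an integer $k$ in the positive hyperbolic case and a half-integer $k+\tfrac12$ in the negative hyperbolic case, the half corresponding to the sign of $\lambda$. Evaluating $\mu_{CZ}$ on the corresponding normal form — $k$ (resp. $k+\tfrac12$) rotations followed by a purely hyperbolic arc, the latter contributing a double eigenvalue of winding number $0$ straddling $0$ and hence index $0$ — gives $\mu_{CZ}(\varphi)=2k$ (resp. $2k+1$); in both cases $\mu_{CZ}(\varphi)=2\rho(\varphi)$. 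By multiplicativity $\rho(\varphi\circ\rho_n)=n\rho(\varphi)\in\tfrac12\mathbb{Z}$, and a short check of the two parities of $n$ (using $\mu_{CZ}=2\rho$ when $\rho\in\mathbb{Z}$ and $\mu_{CZ}=2\lfloor\rho\rfloor+1$ when $\rho\notin\mathbb{Z}$) shows $\mu_\tau(\gamma^n)=2n\rho(\varphi)=n\,\mu_\tau(\gamma)$, proving (1).

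I expect the only real subtlety to lie in the bookkeeping at the level of the universal cover of $Sp(1)$: one must know that $\rho(\varphi)$, and with it the monodromy angle $\theta$, is determined by the homotopy class of $\varphi$ as an actual real number rather than merely modulo $\mathbb{Z}$, and that the relation between the winding-number definition of $\mu_{CZ}$ and $\rho$ takes the stated form $\mu_{CZ}=2\lfloor\rho\rfloor+1$ or $2\rho$. Once these structural facts are in place — and they are precisely the ones assembled in \cite{HWZ2}, which I would cite rather than reprove — the normal-form evaluations above are immediate, and the iteration formulas follow from the exact multiplicativity of $\rho$. This dovetails with Proposition \ref{conleycovering}, whose proof already uses that passing to $\varphi\circ\rho_n$ multiplies the relevant winding numbers by $n$.
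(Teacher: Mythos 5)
The paper does not actually prove this proposition: it is introduced with ``it is well-known that\dots'' and the reader is referred to \cite{HWZ2,HWZ4}, so there is no internal proof to compare yours against. Judged on its own, your argument is correct and is the standard one: reduce to normal forms by homotopy invariance, evaluate the spectral definition of $\mu_{CZ}$ on rigid rotations and on hyperbolic arcs, and propagate to iterates via the exactly homogeneous rotation number. Your computation of $\sigma(L_{S_{\varphi_\theta}})$ (eigenfunctions $e^{2\pi i kt}$, eigenvalues $2\pi(k-\theta)$, winding number $k$, hence $\mu_{CZ}(\varphi_\theta)=2\lfloor\theta\rfloor+1$) is exactly the evaluation of the paper's winding-number definition, and the parity check in the negative hyperbolic case comes out right. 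Two places would need one more sentence in a written-out version. First, replacing $\varphi$ by its normal form must be justified for \emph{every} iterate at once: you need $\mu_{CZ}(\varphi\circ\rho_{n})=\mu_{CZ}(\varphi_{\theta}\circ\rho_{n})$ for all $n$, so the homotopy must be chosen so that the $n$-th power of the moving endpoint never acquires the eigenvalue $1$; this is arranged by first homotoping rel endpoint and then moving the endpoint along a conjugation path $C_{s}R_{\theta}C_{s}^{-1}$ (resp.\ within the hyperbolic component), whose powers have constant spectrum. Second, the relation $\mu_{CZ}=2\rho$ for $\rho\in\mathbb{Z}$ and $\mu_{CZ}=2\lfloor\rho\rfloor+1$ otherwise (equivalently $\mu_{CZ}=\lfloor\rho\rfloor+\lceil\rho\rceil$), for a general non-degenerate path rather than a normal form, is itself a theorem tying the rotation number to the eigenfunction-winding definition used here; you are right to cite \cite{HWZ2} for it, but it carries the real weight of the hyperbolic case. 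Neither point is a substantive gap.
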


For more properties of the Conley-Zehnder index, see \cite{HWZ2,HWZ4}.

\subsection{The definitions and properties of Embedded contact homology}
 In this subsection, we recall the basic construction and properties of ECH.
 
Let $(Y,\lambda)$ be a non-degenerate contact three manifold. For $\Gamma \in H_{1}(Y;\mathbb{Z})$, Embedded contact homology $\mathrm{ECH}(Y,\lambda,\Gamma)$ is defined. At first, we define the chain complex $(\mathrm{ECC}(Y,\lambda,\Gamma),\partial)$. In this paper, we consider ECH over $\mathbb{Z}/2\mathbb{Z}=\mathbb{F}$.

\begin{dfn} [{\cite[Definition 1.1]{H1}}]\label{qdef}
An orbit set $\alpha=\{(\alpha_{i},m_{i})\}$ is a finite pair of distinct simple periodic orbit $\alpha_{i}$ with positive integer $m_{i}$. $\alpha=\{(\alpha_{i},m_{i})\}$ is called an ECH generator If $m_{i}=1$ whenever $\alpha_{i}$ is hyperbolic orbit.
\end{dfn}
Set $[\alpha]=\sum m_{i}[\alpha_{i}] \in H_{1}(Y)$. For two orbit sets $\alpha=\{(\alpha_{i},m_{i})\}$ and $\beta=\{(\beta_{j},n_{j})\}$ with $[\alpha]=[\beta]$, we define  $H_{2}(Y,\alpha,\beta)$ to be the set of relative homology classes of
2-chains $Z$ in $Y$ with $\partial Z =\sum_{i}m_{i} \alpha_{i}-\sum_{j}m_{j}\beta_{j}$ . This is an affine space over $H_{2}(Y)$. 
From now on, we fix the trivialization of the contact surface $\xi$ on each simple orbit and write it as $\tau$. Moreover we assume that the trivialization on any orbit is defined by the trivialization induced by $\tau$ and use the same symbol $\tau$.

\begin{dfn}[{\cite[{\S}8.2]{H1}}]
Let $\alpha_{1}$, $\beta_{1}$, $\alpha_{2}$ and $\beta_{2}$ be orbit sets with $[\alpha_{1}]=[\beta_{1}]$ and $[\alpha_{2}]=[\beta_{2}]$. For a trivialization $\tau$, we can define
\begin{equation}
    Q_{\tau}:H_{2}(Y;\alpha_{1},\beta_{1}) \times H_{2}(Y;\alpha_{2},\beta_{2}) \to \mathbb{Z}
\end{equation}
This is well-defined. Moreover if $Z_{1}\in H_{2}(Y;\alpha_{1},\beta_{1})$ and $Z_{2} \in H_{2}(Y;\alpha_{2},\beta_{2})$, then
\begin{equation}
    Q_{\tau}(Z_{1}+Z_{2},Z_{1}+Z_{2})=Q_{\tau}(Z_{1},Z_{1})+2Q_{\tau}(Z_{1},Z_{2})+Q_{\tau}(Z_{2},Z_{2}).
\end{equation}
See {\cite[{\S}8.2]{H1}} for more details.
\end{dfn}

\begin{dfn}[{\cite[Definition 1.5]{H1}}]
For $Z\in H_{2}(Y,\alpha,\beta)$, we define
\begin{equation}
    I(\alpha,\beta,Z):=c_{1}(\xi|_{Z},\tau)+Q_{\tau}(Z)+\sum_{i}\sum_{k=1}^{m_{i}}\mu_{\tau}(\alpha_{i}^{k})-\sum_{j}\sum_{k=1}^{n_{j}}\mu_{\tau}(\beta_{j}^{k}).
\end{equation}
We call $I(\alpha,\beta,Z)$ an ECH index. Here,  $\mu_{\tau}$ is the Conely Zhender index with respect to $\tau$ and $c_{1}(\xi|_{Z},\tau)$ is a reative Chern number  and $Q_{\tau}(Z)=Q_{\tau}(Z,Z)$. Moreover this is independent of $\tau$ (see  \cite{H1} for more details).
\end{dfn}
Note that ECH index has the property of additivity, that is, for any three orbit sets $\alpha_{1}, \alpha_{2},\alpha_{3}$ and $Z_{1}\in H_{2}(Y;\alpha_{1},\alpha_{2}), Z_{2}\in H_{2}(Y;\alpha_{2},\alpha_{3})$ , we have $I(\alpha_{1},\alpha_{2},Z_{1})+I(\alpha_{2},\alpha_{3},Z_{2})=I(\alpha_{1},\alpha_{3},Z_{1}+Z_{2})$.

For $\Gamma \in H_{1}(Y)$, we define $\mathrm{ECC}(Y,\lambda,\Gamma)$ as freely generated module over $\mathbb{F}$ by ECH generators $\alpha$ such that $[\alpha]=\Gamma$. That is
\begin{equation*}
    \mathrm{ECC}(Y,\lambda,\Gamma):= \bigoplus_{\alpha:\mathrm{ECH\,\,generator\,\,with\,\,}{[\alpha]=\Gamma}}\mathbb{F}\cdot \alpha.
\end{equation*}
To define the differential $\partial:\mathrm{ECC}(Y,\lambda,\Gamma)\to \mathrm{ECC}(Y,\lambda,\Gamma) $, we Fix a generic almost complex structure $J$  on $\mathbb{R}\times Y$ which satisfies $\mathbb{R}$-invariant, $J(\frac{d}{ds})=X_{\lambda}$, $J\xi=\xi$ and $d\lambda(\cdot,J\cdot)>0$.

We consider $J$-holomorphic curves  $u:(\Sigma,j)\to (\mathbb{R}\times Y,J)$ where
the domain $(\Sigma, j)$ is a punctured compact Riemann surface. Here the domain $\Sigma$ is
not necessarily connected.  Let $\gamma$ be a (not necessarily simple) Reeb orbit.  If a puncture
of $u$ is asymptotic to $\mathbb{R}\times \gamma$ as $s\to \infty$, we call it a positive end of $u$ at $\gamma$ and if a puncture of $u$ is asymptotic to $\mathbb{R}\times \gamma$ as $s\to -\infty$, we call it a negative end of $u$ at $\gamma$ ( see \cite{H1} for more details ).

 Let $\alpha=\{(\alpha_{i},m_{i})\}$ and $\beta=\{(\beta_{i},n_{i})\}$ be orbit sets. Let $\mathcal{M}^{J}(\alpha,\beta)$ denote the set of  $J$-holomorphic curves with positive ends
at covers of $\alpha_{i}$ with total covering multiplicity $m_{i}$, negative ends at covers of $\beta_{j}$
with total covering multiplicity $n_{j}$, and no other punctures. Moreover, in $\mathcal{M}^{J}(\alpha,\beta)$, we consider two
$J$-holomorphic curves   to be equivalent if they represent the same current in $\mathbb{R}\times Y$. We sometimes consider an element in $\mathcal{M}^{J}(\alpha,\beta)$ as the image in $\mathbb{R}\times Y$.
 
For $u \in \mathcal{M}^{J}(\alpha,\beta)$, we naturally have $[u]\in H_{2}(Y;\alpha,\beta)$ and set $I(u)=I(\alpha,\beta,[u])$. Moreover we define
\begin{equation}
     \mathcal{M}_{k}^{J}(\alpha,\beta):=\{\,u\in  \mathcal{M}^{J}(\alpha,\beta)\,|\,I(u)=k\,\,\}
\end{equation}

Under this notations, we define $\partial_{J}:\mathrm{ECC}(Y,\lambda,\Gamma)\to \mathrm{ECC}(Y,\lambda,\Gamma)$ as follows.

For admissible orbit set $\alpha$ with $[\alpha]=\Gamma$, we define

\begin{equation}
    \partial_{J} \alpha =\sum_{\beta:\mathrm{\,\,ECH\,\,generator\,\,with\,\,}[\beta]=\Gamma} \# (\mathcal{M}_{1}^{J}(\alpha,\beta)/\mathbb{R})\cdot \beta.
\end{equation}
Note that the above counting is well-defined and $\partial_{J} \circ \partial_{J}=0$ (see \cite{H1,HT1,HT2}, Proposition \ref{indexproperties} ). Moreover, the homology defined by $\partial_{J}$ does not depend on $J$ and if $\mathrm{Ker}\lambda=\mathrm{Ker}\lambda'$ for non-degenerate $\lambda,\lambda'$, there is a natural isomorphism between $\mathrm{ECH}(Y,\lambda,\Gamma)$ and $\mathrm{ECH}(Y,\lambda',\Gamma)$.  Indeed, It is proved in \cite{T1} that there is a natural isomorphism between  ECH and a version of Monopole Floer homology defined in \cite{KM}.

Next, we recall (Fredholm) index.
For $u\in \mathcal{M}^{J}(\alpha,\beta)$, the  its (Fredholm) index is defined by
\begin{equation}
    \mathrm{ind}(u):=-\chi(u)+2c_{1}(\xi|_{[u]},\tau)+\sum_{k}\mu_{\tau}(\gamma_{k}^{+})-\sum_{l}\mu_{\tau}(\gamma_{l}^{-}).
\end{equation}
Here $\{\gamma_{k}^{+}\}$ is the set consisting of (not necessarilly simple) all positive ends of $u$ and $\{\gamma_{l}^{-}\}$ is that one of all negative ends.  Note that for generic $J$, if $u$ is connected and somewhere injective, then the moduli space of $J$-holomorphic
curves near $u$ is a manifold of dimension $\mathrm{ind}(u)$ (see \cite[Definition 1.3]{HT1}).

\subsubsection{$U$-map}
Let $Y$ be connected.
Then there is degree$-2$ map $U$.
\begin{equation*}\label{Umap}
    U:\mathrm{ECH}(Y,\lambda,\Gamma) \to \mathrm{ECH}(Y,\lambda,\Gamma).
\end{equation*}

To define this, choose a base point $z\in Y$ which is especially  not on the image of any Reeb orbits and let $J$ be generic.
Then define a map 
\begin{equation*}
     U_{J,z}:\mathrm{ECC}(Y,\lambda,\Gamma) \to \mathrm{ECC}(Y,\lambda,\Gamma).
\end{equation*}
by
\begin{equation*}
    U_{J,z} \alpha =\sum_{\beta:\mathrm{\,\,ECH\,\,generator\,\,with\,\,}[\beta]=\Gamma} \# \{\,u\in \mathcal{M}_{2}^{J}(\alpha,\beta)/\mathbb{R})\,|\,(0,z)\in u\,\}\cdot  \beta.
\end{equation*}
The above map $U_{J,z}$ commute with $\partial_{J}$ and we can define the $U$ map
as the induced map on homology.  Under the assumption, this map is independent on $z$ (for a generic $J$). See \cite[{\S}2.5]{HT3} for more details. Moreover, in the same reason as $\partial$, $U_{J,z}$ does not depend on $J$ (see \cite{T1}).
\subsubsection{Partition conditions of elliptic orbits}
For $\theta\in \mathbb{R}\backslash \mathbb{Q}$, we define $S_{\theta}$ to be the set of positive integers $q$ such that $\frac{\lceil q\theta \rceil}{q}< \frac{\lceil q'\theta \rceil}{q'}$ for all $q'\in \{1,\,\,2,...,\,\,q-1\}$ and write $S_{\theta}=\{q_{0}=1,\,\,q_{1},\,\,q_{2},\,\,q_{3},...\}$ in increasing order. Also $S_{-\theta}=\{p_{0}=1,\,\,p_{1},\,\,p_{2},\,\,p_{3},...\}$.

\begin{dfn}[{\cite[Definition 7.1]{HT1}}, or {\cite[{\S}4]{H1}}]
For non negative integer $M$, we inductively define the incoming partition $P_{\theta}^{\mathrm{in}}(M)$ as follows.

For $M=0$, $P_{\theta}^{\mathrm{in}}(0)=\emptyset$ and for $M>0$,
\begin{equation*}
    P_{\theta}^{\mathrm{in}}(M):=P_{\theta}^{\mathrm{in}}(M-a)\cup{(a)}
\end{equation*}

where $a:=\mathrm{max}(S_{\theta}\cap{\{1,\,\,2,...,\,\,M\}})$.
 Define outgoing partition
 \begin{equation*}
      P_{\theta}^{\mathrm{out}}(M):= P_{-\theta}^{\mathrm{in}}(M).
 \end{equation*}
 
The standard ordering convention for $P_{\theta}^{\mathrm{in}}(M)$ or $P_{\theta}^{\mathrm{out}}(M)$ is to list the entries
in``nonincreasing'' order.
\end{dfn}

 Let $\alpha=\{(\alpha_{i},m_{i})\}$ and $\beta=\{(\beta_{i},n_{i})\}$. For  $u\in \mathcal{M}^{J}(\alpha,\beta)$, it can be uniquely
written as $u=u_{0}\cup{u_{1}}$ where $u_{0}$ are unions of all components which maps to $\mathbb{R}$-invariant cylinders in $u$ and $u_{1}$ is the rest of $u$.

\begin{prp}[{\cite[Proposition 7.15]{HT1}}]\label{indexproperties}
Suppose that $J$ is generic and $u=u_{0}\cup{u_{1}}\in \mathcal{M}^{J}(\alpha,\beta)$. Then
    \item[(1).] $I(u)\geq 0$
    \item[(2).] If $I(u)=0$, then $u_{1}=\emptyset$
    \item[(3).] If $I(u)=1$, then  $\mathrm{ind}(u_{1})=1$. Moreover  $u_{1}$ is embedded and does not intersect $u_{0}$.
    \item[(4).] If $I(u)=2$ and $\alpha$ and $\beta$ are ECH generators,  then $\mathrm{ind}(u_{1})=2$ .  Moreover  $u_{1}$ is embedded and does not intersect $u_{0}$.
\end{prp}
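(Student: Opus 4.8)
The backbone of the argument is the ECH index inequality for somewhere injective curves, which I would assemble from three standard ingredients and then feed into a decomposition of $u$. First, the relative adjunction formula expresses, for an irreducible somewhere injective curve $C$ with $[C]=Z$, the relative Chern number $c_{1}(\xi|_{Z},\tau)$ in terms of $\chi(C)$, the relative self-intersection $Q_{\tau}(Z)$, the total writhe $w_{\tau}(C)$ of the asymptotic ends, and a nonnegative count $\delta(C)$ of the singularities of $C$. Second, the writhe bound estimates $w_{\tau}(C)$ at each end in terms of the Conley--Zehnder data of the covered orbit, with equality precisely when the ends realize the incoming/outgoing partitions $P^{\mathrm{in}}_{\theta}$, $P^{\mathrm{out}}_{\theta}$. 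Combining these with the definitions of $I$ and $\mathrm{ind}$ yields
\begin{equation*}
\mathrm{ind}(C)\leq I(C)-2\delta(C),
\end{equation*}
with equality forcing $\delta(C)=0$ and the partition conditions at every end. The third ingredient is the behaviour of $I$ under unions: if a holomorphic current splits as $\mathcal{C}_{1}\cup \mathcal{C}_{2}$, then $I(\mathcal{C}_{1}\cup \mathcal{C}_{2})=I(\mathcal{C}_{1})+I(\mathcal{C}_{2})+2(\mathcal{C}_{1}\cdot \mathcal{C}_{2})$, where the relative intersection pairing $\mathcal{C}_{1}\cdot \mathcal{C}_{2}$ (the sum of the $Q_{\tau}$ cross term and the asymptotic linking of the ends) is nonnegative for distinct holomorphic curves and vanishes exactly when they are disjoint with non-linking ends.

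With these in hand, the plan is to decompose $u$ into its distinct irreducible somewhere injective components and to separate the covers of $\mathbb{R}$-invariant cylinders, whose union is $u_{0}$, from the remaining part $u_{1}$. A direct computation shows that the trivial part has $I(u_{0})=0$, so iterating the union formula expresses $I(u)$ as $I(u_{1})$ plus the nonnegative cross term $2(u_{0}\cdot u_{1})$, and in turn decomposes $I(u_{1})$ into the ECH indices of the somewhere injective components of $u_{1}$ (counted with multiplicity) plus further nonnegative intersection terms. For a somewhere injective component occurring with multiplicity one, genericity of $J$ gives transversality, hence $\mathrm{ind}\geq 0$ and therefore $I\geq 2\delta\geq 0$; together with $I(u_{0})=0$ and nonnegativity of every cross term this gives part (1), $I(u)\geq 0$. (Multiply covered nontrivial components, which genericity does not control, must be bounded below by a separate cover estimate; this is part of the obstacle discussed below.)

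For parts (2)--(4) I would run the same decomposition and read off which nonnegative terms are forced to vanish. If $I(u)=0$ then every summand vanishes, so $u_{1}$ can contain no component that is not a trivial cylinder, giving $u_{1}=\emptyset$. If $I(u)=1$, or $I(u)=2$ with $\alpha,\beta$ ECH generators, then the identity $I(u)=I(u_{1})+2(u_{0}\cdot u_{1})$ together with $I(u_{1})\geq \mathrm{ind}(u_{1})+2\delta(u_{1})\geq 0$ forces the intersection term $u_{0}\cdot u_{1}$ to vanish, forces $\delta(u_{1})=0$, and forces $\mathrm{ind}(u_{1})=I(u_{1})=1$ (resp.\ $2$) with the partition conditions holding. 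Vanishing of $\delta(u_{1})$ gives embeddedness of $u_{1}$, and vanishing of the cross term gives that $u_{1}$ is disjoint from $u_{0}$, which is the content of (3) and (4). The hypothesis in (4) that $\alpha,\beta$ be ECH generators is exactly what keeps hyperbolic orbits from being covered with multiplicity, so that the partition-condition bookkeeping and the exclusion of stray covers remain valid at index $2$.

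The main obstacle is the treatment of multiply covered components of $u_{1}$, since genericity of $J$ buys transversality only for somewhere injective curves, and a cover of a nontrivial somewhere injective curve of negative Fredholm index is not excluded by naive dimension counting. The crux is therefore to show that in the regime $I(u)\leq 2$ no such nontrivial multiple cover can occur, so that the only multiply covered pieces are covers of trivial cylinders; this is precisely where the sharp equality case of the index inequality, the evenness of the writhe and linking defects, and the partition conditions must be combined to derive a contradiction from the existence of a low-index cover. The same sharp analysis is what promotes the mere inequalities above into the exact equalities $\mathrm{ind}(u_{1})=I(u_{1})$ together with the embeddedness and disjointness statements, and so it carries the real weight of the proof.
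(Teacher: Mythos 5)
The paper does not prove this proposition at all: it is quoted verbatim from \cite[Proposition 7.15]{HT1} and used as a black box, so there is no in-paper argument to compare yours against. Judged against the source, your outline reproduces the standard architecture correctly: the relative adjunction formula plus the writhe bound give $\mathrm{ind}(C)\leq I(C)-2\delta(C)$ for somewhere injective $C$, with equality forcing $\delta(C)=0$ and the partition conditions; the union formula $I(\mathcal{C}_{1}\cup\mathcal{C}_{2})=I(\mathcal{C}_{1})+I(\mathcal{C}_{2})+2\,\mathcal{C}_{1}\cdot\mathcal{C}_{2}$ with nonnegative cross terms; and $I(u_{0})=0$ for the trivial part. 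Reading off which nonnegative terms must vanish at $I\leq 2$ is indeed how (2)--(4) are extracted.

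However, you have explicitly left open the step that carries the whole weight, and one further ingredient is missing even from your sketch of it. First, for generic $J$ every \emph{nontrivial} somewhere injective component $C_{k}$ satisfies $\mathrm{ind}(C_{k})\geq 1$, because translation by $\partial_{s}$ gives a nonvanishing element of the kernel of its deformation operator; hence $I(C_{k})\geq \mathrm{ind}(C_{k})+2\delta(C_{k})\geq 1$. This is the input that makes the low-index bookkeeping terminate: writing $u_{1}=\sum_{k}d_{k}C_{k}$, one needs the superadditivity estimate of \cite[{\S}7]{HT1} (in the form $I(u_{1})\geq \sum_{k}d_{k}I(C_{k})+\sum_{k\neq l}d_{k}d_{l}\,C_{k}\cdot C_{l}+(\text{nonnegative terms quadratic in }d_{k})$), which together with $I(C_{k})\geq 1$ forces $\sum_{k}d_{k}\leq I(u)$ and, at $I(u)\leq 2$, forces each $d_{k}=1$, after which transversality applies and your equality analysis goes through. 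Your paragraph on the ``main obstacle'' names the problem but supplies neither the $\mathrm{ind}\geq 1$ observation nor the quantitative cover inequality, so as written the proposal does not exclude a multiply covered nontrivial component of negative index and does not close. Since this is exactly the content of \cite[Proposition 7.15]{HT1}, the honest conclusion is that your attempt is a correct reduction of the statement to the hard lemma, not a proof of it.
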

\begin{prp}[{\cite[Proposition 7.14, 7.15]{HT1}}]\label{partitioncondition} 
Let $\alpha=\{(\alpha_{i},m_{i})\}$ and $\beta=\{(\beta_{j},n_{j})\}$ be ECH generators. Suppose that $I(u)=1$ or $2$ for $u=u_{0}\cup{u_{1}}\in \mathcal{M}^{J}(\alpha,\beta)$. Define $P_{\alpha_{i}}^{+}$ by the set consisting of the multiplicities of the positive ends of $u_{1}$ at covers of $\alpha_{i}$. In the same way,  define $P_{\beta_{j}}^{-}$ for the negative end.
Suppose that $\alpha_{i}$ in $\alpha$ (resp. $\beta_{j}$ in $\beta$) is elliptic orbit with the monodromy angle $\theta_{\alpha_{i}}$ (resp. $\theta_{\beta_{j}}$). Then under the standard ordering convention, $P_{\alpha_{i}}^{+}$ (resp. $P_{\beta_{j}}^{-}$) is an initial segment of $P_{\theta_{\alpha_{i}}}^{\mathrm{out}}(m_{i})$ (resp. $P_{\theta_{\beta_{j}}}^{\mathrm{in}}(n_{j})$).
\end{prp}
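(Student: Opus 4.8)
The plan is to reconstruct the Hutchings--Taubes argument by comparing the ECH index $I(u)$ with the Fredholm index $\mathrm{ind}(u)$ and reading off the partitions from the case of equality. The starting point is the \emph{relative adjunction formula} together with the \emph{writhe bound} for the asymptotic braids of $u$. Writing $u=u_0\cup u_1$ as in the statement and applying the relative adjunction formula to the somewhere injective curve $u_1$, one expresses $c_1(\xi|_{[u_1]},\tau)$ in terms of $\chi(u_1)$, $Q_\tau([u_1])$, the total writhe $w_\tau(u_1)$ of the ends, and a singularity count $\delta(u_1)\geq 0$. Substituting this into the definitions of $I(u_1)$ and $\mathrm{ind}(u_1)$ from Section 2.2, the difference reorganizes into $2\delta(u_1)$ plus a sum over the ends of terms measuring the gap between the actual writhe and its extremal value (and between the $\sum_k\mu_\tau(\cdot^{k})$-type sums and the individual end contributions). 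The goal is to show each end term is nonnegative, with equality exactly when the corresponding partition is an initial segment of $P^{\mathrm{out}}_{\theta}$ (positive ends) or $P^{\mathrm{in}}_{\theta}$ (negative ends).

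Second, I would carry out the combinatorial heart of the argument. By Siefring's relative asymptotic analysis, each end of $u_1$ is asymptotic to a multiple cover of a Reeb orbit, and the braid it traces out has a writhe controlled by the winding numbers $w(S,\eta)$ of the asymptotic eigenfunctions recalled in Section 2.1. For an elliptic orbit with monodromy angle $\theta$ these winding numbers are governed by $\lfloor q\theta\rfloor$ and $\lceil q\theta\rceil$, so the writhe of a positive (resp. negative) end of total multiplicity $m$ distributed according to a partition of $m$ admits a sharp bound whose extremal partitions are precisely those built from $S_\theta$ and $S_{-\theta}$, namely $P^{\mathrm{out}}_{\theta}(m)$ and $P^{\mathrm{in}}_{\theta}(m)$. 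This is where the definitions of $S_{\pm\theta}$ enter: the inequality $\frac{\lceil q\theta\rceil}{q}<\frac{\lceil q'\theta\rceil}{q'}$ selecting $S_\theta$ is exactly the condition picking out the multiplicities that optimize the writhe bound. The subtlety that $P^{+}_{\alpha_i}$ need only be an \emph{initial segment} of $P^{\mathrm{out}}_{\theta}(m_i)$ comes from the fact that the residual multiplicity is carried by the trivial cylinders in $u_0$, and the extremal configuration assigns the largest parts to $u_1$.

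Third, I would invoke Proposition \ref{indexproperties} to force equality. If $I(u)=1$, part (3) gives $\mathrm{ind}(u_1)=1$ with $u_1$ embedded and disjoint from $u_0$; if $I(u)=2$ and $\alpha,\beta$ are ECH generators, part (4) gives $\mathrm{ind}(u_1)=2$ with the same embeddedness and disjointness. In either case $\delta(u_1)=0$ and $u_1$ meets no trivial cylinder, so the inequality of the first step reads $I(u)\geq \mathrm{ind}(u_1)+(\text{partition defects})$ with $\mathrm{ind}(u_1)=I(u)$; since the defects are nonnegative they must all vanish. Equality in every end term then forces each positive-end partition $P^{+}_{\alpha_i}$ at an elliptic $\alpha_i$ to be an initial segment of $P^{\mathrm{out}}_{\theta_{\alpha_i}}(m_i)$, and symmetrically $P^{-}_{\beta_j}$ to be an initial segment of $P^{\mathrm{in}}_{\theta_{\beta_j}}(n_j)$, which is the claim.

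The main obstacle is the second step: establishing the sharp writhe bound and identifying its equality case with the combinatorially defined partitions $P^{\mathrm{out}}_{\theta}$, $P^{\mathrm{in}}_{\theta}$. This rests on precise asymptotic control of the $J$-holomorphic ends (the leading asymptotic eigenfunction and its winding number) together with a delicate optimization over partitions; handling the interaction with the trivial cylinders $u_0$ (the ``initial segment'' phenomenon) and checking that cross terms between distinct ends do not spoil nonnegativity are the genuinely technical points. Since this is precisely \cite[Propositions 7.14, 7.15]{HT1}, in practice I would cite that analysis rather than reprove it, but the above is the route I would follow to reconstruct it.
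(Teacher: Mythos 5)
The paper offers no proof of this proposition; it is quoted verbatim from \cite[Propositions 7.14, 7.15]{HT1} and used as a black box. Your outline is a faithful reconstruction of the actual argument in that reference (relative adjunction plus the sharp writhe bound at elliptic ends, with Proposition \ref{indexproperties} forcing equality and the trivial cylinders in $u_{0}$ accounting for the ``initial segment'' statement), so it matches the source the paper relies on and no gap needs to be flagged.
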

\begin{rem}
    There are partition conditions with respect to hyperbolic orbits. But we don't use it in this paper.
\end{rem}

\subsubsection{$J_{0}$ index and topological complexity of $J$-holomorphic curve}

In this subsection, we recall the $J_{0}$ index.

\begin{dfn}[{\cite[{\S}3.3]{HT3}}]
Let $\alpha=\{(\alpha_{i},m_{i})\}$ and $\beta=\{(\beta_{j},n_{j})\}$ be orbit sets with $[\alpha]=[\beta]$.
For $Z\in H_{2}(Y,\alpha,\beta)$, we define
\begin{equation}
    J_{0}(\alpha,\beta,Z):=-c_{1}(\xi|_{Z},\tau)+Q_{\tau}(Z)+\sum_{i}\sum_{k=1}^{m_{i}-1}\mu_{\tau}(\alpha_{i}^{k})-\sum_{j}\sum_{k=1}^{n_{j}-1}\mu_{\tau}(\beta_{j}^{k}).
\end{equation}
\end{dfn}

\begin{dfn}[]
Let $u=u_{0}\cup{u_{1}}\in \mathcal{M}^{J}(\alpha,\beta)$. Suppose that $u_{1}$ is somewhere injective. Let $n_{i}^{+}$
be the number of positive ends of $u_{1}$ which are asymptotic to $\alpha_{i}$, plus 1 if $u_{0}$ includes the trivial cylinder $\mathbb{R}\times \alpha_{i}$ with some multiplicity. Likewise, let $n_{j}^{-}$ be the number of negative ends of $u_{1}$ which are asymptotic to $\beta_{j}$, plus 1 if $u_{0}$ includes the trivial cylinder $\mathbb{R}\times \beta_{j}$ with some multiplicity. 
\end{dfn}
Write $J_{0}(u)=J_{0}(\alpha,\beta,[u])$.
\begin{prp}[{\cite[Lemma 3.5]{HT3}} {\cite[Proposition 5.8]{H3}}]
Let $\alpha=\{(\alpha_{i},m_{i})\}$ and $\beta=\{(\beta_{j},n_{j})\}$ be admissible orbit sets, and let $u=u_{0}\cup{u_{1}}\in \mathcal{M}^{J}(\alpha,\beta)$. Then
\begin{equation}
    -\chi(u_{1})+\sum_{i}(n_{i}^{+}-1)+\sum_{j}(n_{j}^{-}-1)\leq J_{0}(u)
\end{equation}
If $u$ is counted by the ECH differential or the $U$-map, then the above equality holds. 
\end{prp}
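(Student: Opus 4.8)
The plan is to follow the strategy of the ECH index inequality: combine the relative adjunction formula for the somewhere injective part $u_{1}$ with the Hutchings--Taubes writhe estimates, and then extract equality from the embeddedness and partition conditions available for curves counted by $\partial_{J}$ or $U_{J,z}$. First I would split off the trivial-cylinder part $u_{0}$. Each component of $u_{0}$ is an $\mathbb{R}$-invariant cylinder, so it has vanishing relative Chern number and contributes nothing to $Q_{\tau}$ by itself; thus $c_{1}(\xi|_{[u]},\tau)=c_{1}(\xi|_{[u_{1}]},\tau)$ and $Q_{\tau}([u])=Q_{\tau}([u_{1}])+2Q_{\tau}([u_{0}],[u_{1}])$, where the cross term is a linking number between $u_{1}$ and the trivial cylinders. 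This linking term, together with the writhe contributions at the ends lying over trivial cylinders, is exactly what the ``plus $1$'' corrections in the definitions of $n_{i}^{+}$ and $n_{j}^{-}$ record, so after this reduction the claim becomes a statement about $u_{1}$ alone.

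Next I would apply the relative adjunction formula to the somewhere injective curve $u_{1}$,
\begin{equation*}
c_{1}(\xi|_{[u_{1}]},\tau)=\chi(u_{1})+Q_{\tau}([u_{1}])+w_{\tau}(u_{1})-2\delta(u_{1}),
\end{equation*}
in which $w_{\tau}(u_{1})$ is the total writhe of the asymptotic braids and $\delta(u_{1})\geq 0$ counts the singularities of $u_{1}$ with multiplicity. Substituting into the definition of $J_{0}(u)$ cancels $Q_{\tau}([u_{1}])$ and produces
\begin{equation*}
J_{0}(u)=-\chi(u_{1})+2\delta(u_{1})-w_{\tau}(u_{1})+2Q_{\tau}([u_{0}],[u_{1}])+\sum_{i}\sum_{k=1}^{m_{i}-1}\mu_{\tau}(\alpha_{i}^{k})-\sum_{j}\sum_{k=1}^{n_{j}-1}\mu_{\tau}(\beta_{j}^{k}).
\end{equation*}

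The technical heart is then to estimate the writhe. At each positive end of $u_{1}$ over a cover of $\alpha_{i}$ I would invoke the Hutchings--Taubes bound on the writhe in terms of the asymptotic winding numbers of the braid, and dually at each negative end over $\beta_{j}$; these winding numbers are controlled by the Conley--Zehnder indices $\mu_{\tau}(\alpha_{i}^{k})$ and $\mu_{\tau}(\beta_{j}^{k})$ appearing in the truncated sums above. A short computation combining these estimates with the linking term and with $\delta(u_{1})\geq 0$ cancels the truncated Conley--Zehnder sums up to the end-counting contributions, leaving
\begin{equation*}
-\chi(u_{1})+\sum_{i}(n_{i}^{+}-1)+\sum_{j}(n_{j}^{-}-1)\leq J_{0}(u),
\end{equation*}
which is the assertion. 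For the equality statement, if $u$ is counted by $\partial_{J}$ or $U_{J,z}$ then $I(u)\in\{1,2\}$, so Proposition \ref{indexproperties} forces $u_{1}$ to be embedded and hence $\delta(u_{1})=0$, while Proposition \ref{partitioncondition} shows the multiplicities of the ends realise the incoming and outgoing partitions. These are precisely the configurations along which every writhe estimate used above is sharp, so all inequalities collapse and $J_{0}(u)$ attains the stated value.

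The hard part will be the writhe analysis: fixing the sign conventions in the adjunction formula, correctly matching the cross term $Q_{\tau}([u_{0}],[u_{1}])$ against the $n_{i}^{\pm}$ corrections, and above all verifying that the Hutchings--Taubes writhe bounds are sharp exactly along the incoming and outgoing partitions. This is the same delicate asymptotic analysis that underlies the ECH index inequality in \cite{HT1}, where the difficulty is obtaining the sharp constants rather than an inequality merely of the correct shape.
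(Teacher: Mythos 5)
The paper does not prove this proposition at all — it is quoted verbatim from \cite{HT3} and \cite{H3} — and your outline is a faithful reconstruction of the argument in those references: relative adjunction for the somewhere injective part, the Hutchings--Taubes writhe/winding bounds at the ends, the linking corrections from the trivial cylinders, and sharpness via embeddedness and the partition conditions for curves with $I\in\{1,2\}$. So this is essentially the same approach as the paper's (cited) proof, and the one place you flag as delicate — sharpness of the writhe estimates along the standard partitions — is indeed where the real work in \cite{HT1} lives.
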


\subsubsection{ECH spectrum}
The notion of ECH spectrum is introduced in \cite{H2}. At first, we consider the filtered ECH.

The action of an orbit set $\alpha=\{(\alpha_{i},m_{i})\}$ is defined by 
\begin{equation*}
A(\alpha)=\sum m_{i}A(\alpha_{i})=\sum m_{i}\int_{\alpha_{i}}\lambda. 
\end{equation*}
For any $L>0$,  $\mathrm{ECC}^{L}(Y,\lambda,\Gamma)$ denotes the subspace of  $\mathrm{ECC}(Y,\lambda,\Gamma)$ which is generated by ECH generators whose actions are less than $L$. Then $(\mathrm{ECC}^{L}(Y,\lambda,\Gamma),\partial_{J})$ becomes a subcomplex and the homology group $\mathrm{ECH}^{L}(Y,\lambda,\Gamma)$ is defined. Here, we use the fact that if two ECH generators $\alpha=\{(\alpha_{i},m_{i})\}$ and $\beta=\{(\beta_{i},n_{i})\}$ have $A(\alpha)\leq A(\beta)$, then the coefficient of $\beta$ in $\partial \alpha$  is $0$ because of the positivity of $J$-holomorphic curves over $d\lambda$ and the fact that $A(\alpha)-A(\beta)$ is equivalent to the integral value of $d\lambda$ over $J$-holomorphic punctured curves which is asymptotic to $\alpha$ at $+\infty$, $\beta$ at $-\infty$.

It follows from the construction that there exists a canonical homomorphism $i_{L}:\mathrm{ECH}^{L}(Y,\lambda,\Gamma) \to \mathrm{ECH}(Y,\lambda,\Gamma)$. In addition, for non-degenerate contact forms $\lambda,\lambda'$ with $\mathrm{Ker}\lambda=\mathrm{Ker}\lambda'=\xi$, there is a canonical isomorphism $\mathrm{ECH}(Y,\lambda,\Gamma)\to \mathrm{ECH}(Y,\lambda',\Gamma)$ defined by  the cobordism maps for product cobordisms (see \cite{H2}). 
Therefore we can consider a pair of a group $\mathrm{ECH}(Y,\xi,\Gamma)$ and maps $j_{\lambda}:\mathrm{ECH}(Y,\lambda,\Gamma) \to \mathrm{ECH}(Y,\xi,\Gamma)$ for  any non-degenerate contact form $\lambda$ with $\mathrm{Ker}\lambda=\xi$ such that $\{j_{\lambda}\}_{\lambda}$ is compatible with the canonical map $\mathrm{ECH}(Y,\lambda,\Gamma)\to \mathrm{ECH}(Y,\lambda',\Gamma)$.
\begin{dfn}[{\cite[Definition 4.1, cf. Definition 3.4]{H2}}]
Let $Y$ be a closed oriented three manifold with a non-degenerate contact form $\lambda$ with $\mathrm{Ker}\lambda=\xi$ and $\Gamma \in H_{1}(Y,\mathbb{Z})$. If $0\neq \sigma \in \mathrm{ECH}(Y,\xi,\Gamma)$, define
\begin{equation*}\label{spect}
    c_{\sigma}^{\mathrm{ECH}}(Y,\lambda)=\inf\{L>0 |\, \sigma \in \mathrm{Im}(j_{\lambda}\circ i_{L}:\mathrm{ECH}^{L}(Y,\lambda,\Gamma) \to \mathrm{ECH}(Y,\xi,\Gamma))\, \}
\end{equation*}
If $\lambda$ is degenerate, define
\begin{equation*}
    c_{\sigma}^{\mathrm{ECH}}(Y,\lambda)=\sup\{c_{\sigma}(Y,f_{-}\lambda)\}=\inf\{c_{\sigma}(Y,f_{+}\lambda)\}
\end{equation*}
where the supremum is over functions $f_{-}:Y\to (0,1]$ such that $f\lambda$ is non-degenerate and the infimum is over smooth functions $f_{+}:Y\to [1,\infty)$ such that $f_{-}\lambda$ is non-degenerate. Note that $c_{\sigma}^{\mathrm{ECH}}(Y,\lambda)<\infty$ and this definition makes sense. See {\cite[Definition 4.1, Definition 3.4, \S 2.3]{H2}} for more details.
\end{dfn}

\begin{dfn}\cite[Subsection 2.2]{H2}
Let $Y$ be a closed oriented three manifold with a non-degenerate contact form $\lambda$ with $\xi$. Then there is a canonical element called the ECH contact invariant,
\begin{equation}
    c(\xi):=\langle \emptyset \rangle \in \mathrm{ECH}(Y,\lambda,0).
\end{equation}
where $\langle \emptyset \rangle$ is the equivalent class containing $\emptyset$.
Note that $\partial_{J} \emptyset =0$ because of the maximal principle. In addition, $c(\xi)$  depends only on the contact structure $\xi$. 
\end{dfn}
\begin{dfn}\label{echspect}\cite[Definition 4.3]{H2}
    If $(Y,\lambda)$ is a closed connected contact three-manifold with $c(\xi)\neq 0$, and if $k$ is a nonnegative integer, define
    \begin{equation}
        c_{k}^{\mathrm{ECH}}(Y,\lambda):=\min\{c_{\sigma}^{\mathrm{ECH}}(Y,\lambda)|\,\sigma\in \mathrm{ECH}(Y,\xi,0),\,\,U^{k}\sigma=c(\xi)\}
    \end{equation}
The sequence $\{c_{k}^{\mathrm{ECH}}(Y,\lambda)\}$ is called the  ECH spectrum of $(Y,\lambda)$.
\end{dfn}
\begin{prp}\label{properties1}
    Let $(Y,\lambda)$ be a closed connected contact three manifold.
    \item[(1).]   \begin{equation*}
         0=c_{0}^{\mathrm{ECH}}(Y,\lambda)<c_{1}^{\mathrm{ECH}}(Y,\lambda) \leq c_{2}^{\mathrm{ECH}}(Y,\lambda)...\leq \infty.
     \end{equation*}
    \item[(2).] For any  $a>0$ and positive integer $k$, 
    \begin{equation*}
         c_{k}^{\mathrm{ECH}}(Y,a\lambda)= ac_{k}^{\mathrm{ECH}}(Y,\lambda).
    \end{equation*}
    \item[(3).] Let $f_{1},f_{2}:Y\to (0,\infty)$ be smooth functions with $f_{1}(x)\leq f_{2}(x)$ for every $x\in Y$. Then \begin{equation*}
        c_{k}^{\mathrm{ECH}}(Y,f_{1}\lambda)\leq c_{k}^{\mathrm{ECH}}(Y,f_{2}\lambda).
    \end{equation*}
    \item[(4).] Suppose $c_{k}^{\mathrm{ECH}}(Y,f\lambda)<\infty$. The map 
    \begin{equation*}
        C^{\infty}(Y,\mathbb{R}_{>0})\ni f \mapsto c_{k}^{\mathrm{ECH}}(Y,f\lambda)\in \mathbb{R}
    \end{equation*}
     is continuous in $C^{0}$-topology on $C^{\infty}(Y,\mathbb{R}_{>0})$.
\end{prp}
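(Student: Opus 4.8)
The plan is to derive all four statements from the definition of $c_{\sigma}^{\mathrm{ECH}}$ together with two structural inputs about ECH that are established in \cite{H2}: first, that the chain-level maps $U_{J,z}$ and the inclusion-induced maps $i_{L}$ are compatible with the action filtration; and second, that exact symplectic cobordisms induce maps on filtered ECH which commute with $U$ and $i_{L}$ and fix the contact invariant. I would treat the statements in the order (2), (1), (3), (4), since the scaling and monotonicity facts feed into the continuity claim.

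For (2), I would observe that $\mathrm{Ker}(a\lambda)=\mathrm{Ker}\lambda=\xi$, so the two contact forms share the same Reeb orbits as sets and the same ECH generators; under the canonical identification of $\mathrm{ECC}(Y,a\lambda,\Gamma)$ with $\mathrm{ECC}(Y,\lambda,\Gamma)$ the differential, the $U$-map, and the contact invariant all correspond. The only change is that the action rescales, $A_{a\lambda}(\alpha)=a\,A_{\lambda}(\alpha)$, so that $\mathrm{ECC}^{L}(Y,a\lambda,\Gamma)=\mathrm{ECC}^{L/a}(Y,\lambda,\Gamma)$. Tracing this through Definition \ref{echspect} gives $c_{\sigma}^{\mathrm{ECH}}(Y,a\lambda)=a\,c_{\sigma}^{\mathrm{ECH}}(Y,\lambda)$ for every class $\sigma$, and since the minimum is taken over the same set $\{\sigma:U^{k}\sigma=c(\xi)\}$, the identity follows; the degenerate case is handled by the $\sup/\inf$ formula, which is compatible with scaling. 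For (1), the computation $c_{0}^{\mathrm{ECH}}=c_{c(\xi)}^{\mathrm{ECH}}=0$ is immediate because $c(\xi)=\langle\emptyset\rangle$ is represented by the empty orbit set, which has action $0$ and lies in $\mathrm{ECC}^{L}$ for every $L>0$. For $c_{k}\leq c_{k+1}$ I would use that $U_{J,z}$ preserves the subcomplexes $\mathrm{ECC}^{L}$, so $U$ does not increase spectral values, i.e. $c_{U\sigma}^{\mathrm{ECH}}\leq c_{\sigma}^{\mathrm{ECH}}$; given $\sigma$ realizing $c_{k+1}$ (so $U^{k+1}\sigma=c(\xi)\neq0$, forcing $U\sigma\neq0$), the class $U\sigma$ satisfies $U^{k}(U\sigma)=c(\xi)$ and is admissible for $c_{k}$, whence $c_{k}\leq c_{U\sigma}\leq c_{\sigma}=c_{k+1}$. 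The strict inequality $c_{0}<c_{1}$ follows from a grading argument: a nonzero class of spectral value $0$ must be represented at action level $0$, hence equals $c(\xi)$, but a class $\sigma$ with $U\sigma=c(\xi)$ cannot equal $c(\xi)$ because $U$ lowers the ECH grading by $2$, so every admissible $\sigma$ for $c_{1}$ has positive spectral value.

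For (3), I would invoke the cobordism maps of filtered ECH. When $f_{1}\leq f_{2}$, the region $\{(s,y):\log f_{1}(y)\leq s\leq \log f_{2}(y)\}$ in the symplectization realizes an exact symplectic cobordism from $(Y,f_{2}\lambda)$ to $(Y,f_{1}\lambda)$; the induced maps on $\mathrm{ECH}^{L}$ commute with $U$ and with $i_{L}$ and fix the contact invariant, which forces $c_{\sigma}^{\mathrm{ECH}}(Y,f_{1}\lambda)\leq c_{\sigma}^{\mathrm{ECH}}(Y,f_{2}\lambda)$ for each $\sigma$, and hence the same inequality for $c_{k}$. Property (4) is then a formal consequence of (2) and (3): since $Y$ is compact and $f>0$, for $g$ with $\|f-g\|_{C^{0}}$ small one can sandwich $(1-\epsilon)f\leq g\leq(1+\epsilon)f$ with $\epsilon\to0$, and (3) combined with the scaling (2) yields $(1-\epsilon)\,c_{k}(Y,f\lambda)\leq c_{k}(Y,g\lambda)\leq(1+\epsilon)\,c_{k}(Y,f\lambda)$, i.e.\ Lipschitz continuity at $f$. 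I expect the only genuine analytic content, and hence the main obstacle, to lie in the construction and properties of these filtered cobordism maps—the Hutchings–Taubes input imported from \cite{H2}; once it is available, the remaining parts reduce to elementary manipulations of the definition.
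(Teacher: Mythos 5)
Your proposal is essentially correct, and it fills in exactly what the paper omits: the paper's ``proof'' of Proposition \ref{properties1} is the single line ``They follow from the properties of ECH. See \cite{H2},'' so the intended argument is precisely the standard one you reconstruct --- the chain-level identification under rescaling for (2), the fact that $U_{J,z}$ and $i_{L}$ respect the action filtration for (1), the filtered cobordism maps of Hutchings--Taubes for (3), and the squeeze $(1-\epsilon)f\leq g\leq(1+\epsilon)f$ combined with (2) and (3) for (4). The one step I would tighten is your justification of the strict inequality $c_{0}<c_{1}$. You argue that a class $\sigma$ with $U\sigma=c(\xi)$ cannot equal $c(\xi)$ because $U$ lowers the grading by $2$; but for a general closed contact three-manifold the absolute grading on $\mathrm{ECH}(Y,\xi,0)$ is only a $\mathbb{Z}/d$-grading, where $d$ is the divisibility of $c_{1}(\xi)$ in $H^{2}(Y;\mathbb{Z})$ modulo torsion, and $d=2$ is possible (e.g.\ for unit cotangent bundles of genus-two surfaces), in which case $U$ has degree $0$ and the grading argument says nothing. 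The robust argument is at the chain level: any $u\in\mathcal{M}^{J}(\emptyset,\beta)$ has $\int u^{*}d\lambda=-A(\beta)\leq 0$, which by positivity of $d\lambda$ on $J$-holomorphic curves forces $u$ to be a union of trivial cylinders over the empty set; hence $U_{J,z}\emptyset=0$, so $Uc(\xi)=0\neq c(\xi)$, and every $\sigma$ admissible for $c_{1}$ satisfies $c_{\sigma}\geq\hbar>0$ with $\hbar$ the minimal period of a Reeb orbit (this also handles the degenerate case, since the minimal periods of the approximating non-degenerate forms are bounded below). For the lens spaces actually used in this paper $c_{1}(\xi_{\mathrm{std}})$ is torsion, so your grading argument does go through there; the correction matters only for the proposition in the stated generality.
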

\begin{proof}[\bf Proof of Proposition \ref{properties1}]
They follow from the properties of ECH. See \cite{H2}.
\end{proof}
\subsubsection{ECH on Lens spaces}
Now, we focus on lens spaces.

Since $H_{2}(L(p,q))=0$, we write ECH index of $\alpha$, $\beta$ as $I(\alpha,\beta)$ instead of $I(\alpha,\beta,Z)$ where $\{Z\}= H_{2}(L(p,q);\alpha,\beta)$.

Let $(L(p,q),\lambda)$ be a non-degenerate contact lens space and consider ECH of  $0\in H_{1}(L(p,q))$. Since $[\emptyset]=0$, there is  an absolute $\mathbb{Z}$-grading on $\mathrm{ECH}(L(p,q),\lambda,0)=\bigoplus_{k\in\mathbb{Z}}\mathrm{ECH}_{k}(L(p,q),\lambda,0)$ defined by ECH index relative to $\emptyset$ where $\mathrm{ECH}_{k}(L(p,q),\lambda,0)$ is as follows.
Define
\begin{equation*}
    \mathrm{ECC}_{k}(L(p,q),\lambda,0):= \bigoplus_{\alpha:\mathrm{ECH\,\,generator},\,{[\alpha]=0},\,I(\alpha,\emptyset)=k}\mathbb{F}\cdot \alpha.
\end{equation*}
Since  $\partial_{J} $  maps $\mathrm{ECC}_{*}(L(p,q),\lambda,0)$ to  $\mathrm{ECC}_{*-1}(L(p,q),\lambda,0)$ and $U_{J,z}$ does  $\mathrm{ECC}_{*}(L(p,q),\lambda,0)$ to $\mathrm{ECC}_{*-2}(L(p,q),\lambda,0)$, we have $\mathrm{ECH}_{k}(L(p,q),\lambda,0)$ and 
\begin{equation*}
    U:\mathrm{ECH}_{*}(L(p,q),\lambda,0) \to\mathrm{ECH}_{*-2}(L(p,q),\lambda,0).
\end{equation*}
Based on these understandings, the next follows.
\begin{prp}\label{lenssp}
 Let $(L(p,q),\lambda)$ be a non-degenerate.
 \item[(1).]   If $k$ is even and non-negative,
    \begin{equation*}
        \mathrm{ECH}_{k}(L(p,q),\lambda,0)\cong \mathbb{F}.
    \end{equation*}
    If $k$ is odd or negative, $\mathrm{ECH}_{k}(L(p,q),\lambda,0)$ is zero. 
    Moreover, for $n\geq1$ the $U$-map
    \begin{equation*}
        U:\mathrm{ECH}_{2n}(L(p,q),\lambda,0)\to\mathrm{ECH}_{2(n-1)}(L(p,q),\lambda,0)
    \end{equation*}
    is isomorphism.
    \item[(2).] If $\mathrm{Ker}\lambda=\xi_{\mathrm{std}}$, $0\neq c(\xi_{\mathrm{std}})\in \mathrm{ECH}_{0}(L(p,q),\lambda,0)$. Therefore, we can define the ECH spectrum (Definition \ref{echspect}).
\end{prp}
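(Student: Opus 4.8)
The plan is to derive both statements from Taubes' isomorphism \cite{T1} identifying $\mathrm{ECH}(L(p,q),\lambda,\Gamma)$ with (a version of) the monopole Floer homology of \cite{KM}, together with the fact that lens spaces are $L$-spaces. Because $H_{2}(L(p,q))=0$ and $\Gamma=0$ is torsion, the $\mathrm{ECH}$ index supplies an absolute $\mathbb{Z}$-grading with $\langle\emptyset\rangle$ sitting in degree $I(\emptyset,\emptyset)=0$, and under \cite{T1} the graded group $\mathrm{ECH}_{*}(L(p,q),\lambda,0)$ is identified, up to an overall shift, with the monopole Floer homology in the spin$^{c}$ structure determined by $\xi_{\mathrm{std}}$ and $0\in H_{1}(L(p,q))$. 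The $U$-map corresponds under this identification to the monopole $U$-action.

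For the group structure in (1) I would invoke that $L(p,q)$ is an $L$-space: the reduced part of its monopole Floer homology vanishes, so in each spin$^{c}$ structure the homology is a single $U$-tower. Transporting this through \cite{T1}, $\mathrm{ECH}_{*}(L(p,q),\lambda,0)$ is $\cong\mathbb{F}$ in every grading of one fixed parity lying at or above some bottom degree $d$ and is $0$ elsewhere, while the $U$-map is an isomorphism from each tower level onto the one two degrees below, except out of the bottom level. Thus, once the tower is known to start in even degree $0$, the full pattern of (1) and the isomorphism property of $U$ for $n\geq1$ follow immediately; the vanishing in odd degrees is automatic since a single tower lives in one parity.

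It remains to anchor the bottom of the tower at degree $0$, which is also the content of (2). First, $c(\xi_{\mathrm{std}})=\langle\emptyset\rangle\in\mathrm{ECH}_{0}$ since $I(\emptyset,\emptyset)=0$. As $\xi_{\mathrm{std}}$ is the canonical contact structure on the link of the cyclic quotient singularity, it is Stein fillable; under \cite{T1} the $\mathrm{ECH}$ contact invariant corresponds to the monopole contact invariant, which is nonzero for fillable contact structures, so $c(\xi_{\mathrm{std}})\neq0$. This proves (2) and forces $\mathrm{ECH}_{0}\neq0$, whence the bottom degree satisfies $d\leq0$ and $d\equiv0\ (\mathrm{mod}\ 2)$. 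To exclude negative gradings I would pass to a concrete nondegenerate model for $(L(p,q),\xi_{\mathrm{std}})$, namely the quotient of an irrational ellipsoid $\partial E(a,b)\subset\mathbb{C}^{2}$ by the defining $\mathbb{Z}/p$-action, whose descended Reeb flow has exactly two simple orbits; listing the orbit sets in the class $0\in H_{1}(L(p,q))$ and computing their $\mathrm{ECH}$ indices relative to $\emptyset$ by the combinatorial (lattice-point) formula \cite{H1} shows that all such indices are non-negative, with $\emptyset$ the unique generator of index $0$. Hence $d=0$, and by the canonical grading-preserving isomorphism between $\mathrm{ECH}$ for any two contact forms with kernel $\xi_{\mathrm{std}}$ the conclusion holds for every nondegenerate such $\lambda$.

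The main obstacle is the absolute grading normalization: the structural inputs (Taubes' isomorphism and the $L$-space property) yield the tower and the $U$-isomorphism only up to an overall shift, and identifying the bottom level with degree $0$ genuinely requires combining the nonvanishing of the fillable contact invariant, which deposits a nonzero class in degree $0$, with an index computation ruling out generators in negative degree. Everything else—that $\partial$ and $U$ preserve $\Gamma=0$, the evenness of the surviving gradings, and the fact that $U$ fails to be injective only out of the bottom—is formal once the single-tower picture is established.
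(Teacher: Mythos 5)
Your conclusions are right, but you part ways with the paper at item (2). The paper's proof is a two-line citation: (1) is attributed to the ECH--Seiberg--Witten isomorphism of \cite{T1} (together with \cite{KM}), and (2) to an exact-cobordism map between $(L(p,q),\lambda)$ and a quotient of an irrational ellipsoid, following \cite[Proof of Proposition 4.5]{H2}: one compares $\lambda$ with a rescaled ellipsoid-quotient form, uses that the cobordism map sends $\langle\emptyset\rangle$ to $\langle\emptyset\rangle$, and that $\emptyset$ is visibly a nonzero cycle in the ellipsoid quotient because the differential there vanishes for index reasons. You instead get $c(\xi_{\mathrm{std}})\neq 0$ from Stein fillability plus nonvanishing of the monopole contact invariant, transported through Taubes' isomorphism. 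That is a legitimate alternative, but it needs strictly more input than \cite{T1}: the identification of the ECH contact class with the monopole contact element is a separate theorem (later papers in Taubes' series), as is the nonvanishing of the monopole contact invariant for fillable structures. The paper's cobordism route stays inside ECH and uses only machinery already quoted elsewhere in the text. Your treatment of (1) is essentially the paper's ($L$-space single-tower structure under ECH $=$ HM), but you are more explicit about the one point both arguments must address, namely anchoring the absolute grading so that the tower bottom sits in degree $0$; your resolution via the combinatorial index computation on the ellipsoid-quotient model plus the grading-preserving canonical isomorphism is correct and is a genuine improvement in completeness over the paper's citation.

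One caveat worth stating: both of your anchoring devices --- the nonvanishing contact invariant and the explicit ellipsoid-quotient model --- are available only when $\mathrm{Ker}\lambda=\xi_{\mathrm{std}}$, whereas part (1) is stated for an arbitrary non-degenerate contact form on $L(p,q)$. For a non-standard (e.g.\ overtwisted) contact structure the spin$^{c}$ structure and the homotopy class of the plane field change, and the $U$-tower need not begin at the grading of $\emptyset$, so your proof (and, implicitly, the paper's) establishes (1) only for $\xi_{\mathrm{std}}$. Since the proposition is only ever invoked with $\xi_{\mathrm{std}}$, this does not affect the rest of the paper, but you should either add the hypothesis to (1) or supply a separate argument for the general case.
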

\begin{proof}[\bf Proof of Proposition \ref{lenssp}]
    (1) follows from the isomorphism between ECH and monopole floer homology. See \cite{KM,T1}. (2) follows from the cobordism map between $(L(p,q),\lambda)$ and a quotient of an irrational ellipsoid. This is an analogue of $(S^{3},\xi_{\mathrm{std}})$ in \cite[Proof of Proposition 4.5]{H2}.
\end{proof}

\section{Behaviors of Conley-Zehnder index and $J$-holomorphic curves}

In this section, we observe  behaviors of of Conley-Zehnder index and $J$-holomorphic curves under non-degeneracy. If $(L(p,p-1),\lambda)$ is dynamically convex with $\mathrm{Ker}\lambda=\xi_{\mathrm{std}}$, we can take an universal symplectic trivialization $\tau_{0}:\mathrm{Ker}\lambda=\xi_{\mathrm{std}}\to L(p.p-1)\times \mathbb{C}$. We let $\mu(\gamma)$ denote the Conley-Zehnder index of $\gamma$ defined from $\tau_{0}$. Note that if $\gamma$ is contractible, $\mu_{\mathrm{disk}}(\gamma)=\mu(\gamma)$. 

\begin{lem}\label{conleyindex}
Assume that $(L(p,p-1),\lambda)$ is non-degenerate dynamically convex contact manifold with $\mathrm{Ker}\lambda=\xi_{\mathrm{std}}$.
\item[(1).]  For any orbit $\gamma$, $\mu(\gamma)\geq 1$.
\item[(2).] Suppose that $p=2$. If $\mu(\gamma)=1$, $\gamma$ is a simple non-contractible elliptic orbit and if $\mu(\gamma)=2$, $\gamma$ is a simple non-contractible positive hyperbolic orbit.
\item[(3).]Suppose that $p>2$. If $\mu(\gamma)=1$, $\gamma$ is a simple non-contractible negative hyperbolic orbit otherwise an elliptic orbit which may not be simple.  If $\mu(\gamma)=2$, $\gamma$ is a  non-contractible positive hyperbolic orbit.
\end{lem}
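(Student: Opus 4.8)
The plan is to combine three ingredients: dynamical convexity, the Conley--Zehnder iteration formulas for elliptic and hyperbolic orbits recalled in Section 2, and homological bookkeeping on the lens space. Throughout I would use that $\pi_{1}(L(p,p-1))\cong H_{1}(L(p,p-1);\mathbb{Z})\cong\mathbb{Z}/p\mathbb{Z}$ is abelian, so that an orbit $\gamma$ is contractible if and only if $[\gamma]=0$, in which case $\mu(\gamma)=\mu_{\mathrm{disk}}(\gamma)\ge 3$. The central device is an \emph{order reduction}: for an arbitrary orbit $\gamma$, let $d\ge 1$ be the order of $[\gamma]$ in $\mathbb{Z}/p\mathbb{Z}$, so that $\gamma^{d}$ is contractible and dynamical convexity gives $\mu(\gamma^{d})\ge 3$. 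Since $\tau_{0}$ is a global trivialization, $\mu(\gamma^{n})=\mu_{CZ}(\phi_{\gamma,\tau_{0}}\circ\rho_{n})$, so the iteration formulas apply with $\tau=\tau_{0}|_{\gamma}$: one has $\mu(\gamma^{n})=n\,\mu(\gamma)$ when $\gamma$ is hyperbolic, and $\mu(\gamma^{n})=2\lfloor n\theta\rfloor+1$ when $\gamma$ is elliptic with monodromy angle $\theta\in\mathbb{R}\setminus\mathbb{Q}$ measured in $\tau_{0}$. Finally, since a change of trivialization shifts $\mu$ by an even integer, the parity of $\mu(\gamma)$ is intrinsic and detects the orbit type: elliptic and negative hyperbolic orbits have odd index, while positive hyperbolic orbits have even index.

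For (1) I would apply the order reduction directly. If $\gamma$ is hyperbolic, then $d\,\mu(\gamma)=\mu(\gamma^{d})\ge 3$ forces the integer $\mu(\gamma)$ to be at least $3/d>0$, hence $\mu(\gamma)\ge 1$. If $\gamma$ is elliptic with angle $\theta$, then $2\lfloor d\theta\rfloor+1=\mu(\gamma^{d})\ge 3$ gives $\lfloor d\theta\rfloor\ge 1$, so $\theta\ge 1/d>0$ and therefore $\mu(\gamma)=2\lfloor\theta\rfloor+1\ge 1$.

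For (2) and (3) I would first note that $\mu(\gamma)\in\{1,2\}$ is $<3$, so $\gamma$ is non-contractible, i.e.\ $[\gamma]\ne 0$. By parity, $\mu(\gamma)=2$ forces $\gamma$ positive hyperbolic, while $\mu(\gamma)=1$ forces $\gamma$ elliptic or negative hyperbolic. If $\mu(\gamma)=1$ and $\gamma$ is negative hyperbolic, then $\gamma$ is simple, since a factorization $\gamma=\delta^{k}$ with $k\ge 2$ would give the impossible equality $k\,\mu(\delta)=1$. Together with the dichotomy, this already settles (3): for $\mu(\gamma)=1$ one obtains a simple negative hyperbolic orbit or a (possibly non-simple) elliptic orbit, and for $\mu(\gamma)=2$ a positive hyperbolic orbit, all non-contractible. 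It then remains to prove the two simplicity statements in (2), which are special to $p=2$. There $[\gamma]\ne 0$ forces $[\gamma]$ to have order exactly $2$, so $\gamma^{2}$ is contractible with $\mu(\gamma^{2})\ge 3$. For $\mu(\gamma)=1$ the negative hyperbolic case is excluded, since it would give $\mu(\gamma^{2})=2\,\mu(\gamma)=2<3$; hence $\gamma$ is elliptic. If moreover $\gamma=\delta^{k}$ with $k\ge 2$ and $\delta$ elliptic of angle $\theta$, then $2\lfloor k\theta\rfloor+1=1$ forces $0<\theta<1/k$, while $[\gamma]=k[\delta]\ne 0$ forces $[\delta]\ne 0$ of order $2$ with $k$ odd and $k\ge 3$; applying dynamical convexity to the contractible $\delta^{2}$ gives $\theta\ge 1/2$, contradicting $\theta<1/k\le 1/3$. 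For $\mu(\gamma)=2$ the orbit is positive hyperbolic, and a factorization $\gamma=\delta^{k}$ with $k\ge 2$ would force $k=2$ and $\mu(\delta)=1$, so $\delta$ is negative hyperbolic with $[\delta]\ne 0$ of order $2$; but then $\gamma=\delta^{2}$ is contractible, contradicting $\mu(\gamma)=2<3$. Hence $\gamma$ is simple in both cases.

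The routine part is the parity and index arithmetic; the step I expect to be the main obstacle is the simplicity argument for index-$1$ elliptic orbits in (2). There one must simultaneously bound the monodromy angle from above through the iteration formula ($\theta<1/k$) and from below through dynamical convexity applied to the correct contractible iterate of the \emph{underlying} simple orbit $\delta$, which in turn requires tracking $[\delta]$ in $\mathbb{Z}/p\mathbb{Z}$. The reason this closes for $p=2$ but (correctly) fails for $p>2$ is structural: when $p=2$ the only admissible order is $2$, squeezing $\theta$ into an empty interval, whereas for $p>2$ a larger order leaves room for genuine non-simple elliptic orbits of index $1$, consistent with Remark \ref{mainrem}(2).
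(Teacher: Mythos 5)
Your proposal is correct and follows essentially the same route as the paper: apply dynamical convexity to the contractible iterate of $\gamma$, combine with the iteration formulas for elliptic and hyperbolic orbits in the global trivialization, and use parity to identify the orbit type. You are in fact more explicit than the paper about the simplicity claims for $p=2$ (the angle-squeezing argument for index-$1$ elliptic orbits and the homological contradiction for index-$2$ hyperbolic ones), which the paper's proof asserts tersely but by the same underlying mechanism.
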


\begin{proof}[\bf Proof of Lemma \ref{conleyindex}]
Suppose $\mu(\gamma)< 3$. Since $\lambda$ is dynamically convex, $\gamma$ is non-contractible. Note that $\gamma^{p}$ is contractible. Suppose that $\gamma$ is hyperbolic. Then $\mu(\gamma^{p})=p\mu(\gamma)\geq 3$. If $p=2$, this implies that  $\mu(\gamma)=2$ and hence $\gamma$ is simple positive hyperbolic.
If $p>2$, $\mu(\gamma^{p})=p\mu(\gamma)\geq 3$ implies that $\mu(\gamma)=1$ or $2$.
 If $\mu(\gamma)=2$, $\gamma$ is a  positive hyperbolic orbit (may not be simple). If $\mu(\gamma)=1$, $\gamma$  is a simple negative hyperbolic orbit. 

Next suppose that $\gamma$ with $\mu(\gamma)< 3$ is elliptic. Then there is $\theta\in \mathbb{R}\backslash \mathbb{Q}$ such that $\mu(\gamma^{n})=2\lfloor n\theta \rfloor +1$ for every $n\in \mathbb{Z}_{>0}$. Since $\mu(\gamma^{p})=2\lfloor p\theta \rfloor +1\geq 3$, we have $\lfloor p\theta \rfloor\geq 1$ and hence $\lfloor \theta \rfloor\geq 0$. In particular, since $\mu(\gamma)< 3$, we have $\lfloor \theta \rfloor= 0$. Moreover  in the case of $p=2$,  $\lfloor 2\theta \rfloor=1$. This implies that if $p=2$, $\gamma$ is simple and $\mu(\gamma)=1$. We complete the proof.
\end{proof}

\begin{lem}\label{emptyindex1}
Assume that $(L(p,p-1),\lambda)$ is non-degenerate dynamically convex contact manifold with $\mathrm{Ker}\lambda=\xi_{\mathrm{std}}$. Let $\alpha$ be an orbit set with $[\alpha]=0\in H_{1}(Y)$.
     If $I(\alpha,\emptyset)=1$, then $\mathcal{M}^{J}(\alpha,\emptyset)=\emptyset$.
\end{lem}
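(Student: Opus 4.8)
The plan is to suppose, for contradiction, that some $u\in\mathcal{M}^J(\alpha,\emptyset)$ exists, and to rule it out by a Fredholm-index count performed in the global trivialization $\tau_0$, in which the relative Chern term drops out and dynamical convexity pins down the contribution of each end. First I would note that $H_2(L(p,p-1))=0$ forces $I(u)=I(\alpha,\emptyset)=1$ for every such $u$. Writing $u=u_0\cup u_1$, I observe that the empty negative end precludes any trivial-cylinder component: a component of $u_0$ maps into some $\mathbb{R}\times\gamma$ and necessarily carries a negative end, which $u$ does not have. Hence $u_0=\emptyset$, $u=u_1$, and Proposition \ref{indexproperties}(3) yields $\mathrm{ind}(u)=1$.

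Next I would compute $\mathrm{ind}(u)$ using $\tau_0$. Because $\tau_0$ trivializes $\xi_{\mathrm{std}}$ over all of $L(p,p-1)$, the relative Chern number $c_1(\xi|_{[u]},\tau_0)$ vanishes, and since $u$ has no negative ends,
\begin{equation*}
\mathrm{ind}(u)=-\chi(u)+\sum_{k}\mu(\gamma_k^+),
\end{equation*}
the sum running over the positive ends. I would then split $u$ into its connected components $u^{(1)},\dots,u^{(b)}$; each is a non-constant finite-energy curve with only positive ends, hence has at least one positive end by the maximum principle for the $\mathbb{R}$-coordinate, and $\mathrm{ind}$ is additive over components.

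For a component of genus $g_c$ with $N_c\ge 1$ positive ends one has $-\chi(u^{(c)})=2g_c-2+N_c$, and the universal bound $\mu(\gamma)\ge 1$ from Lemma \ref{conleyindex}(1) gives $\mathrm{ind}(u^{(c)})\ge 2g_c+2N_c-2$, which is already $\ge 2$ unless $g_c=0$ and $N_c=1$. It then remains only to treat a plane component ($g_c=0$, $N_c=1$): projecting it to $L(p,p-1)$ via $\pi_Y$ caps off its unique positive end, exhibiting that end as a contractible orbit, so dynamical convexity (together with $\mu=\mu_{\mathrm{disk}}$ on contractible orbits) forces its index to be $\ge 3$, whence $\mathrm{ind}(u^{(c)})=-1+\mu\ge 2$ as well. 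Therefore every component contributes at least $2$, giving $\mathrm{ind}(u)\ge 2b\ge 2$, contradicting $\mathrm{ind}(u)=1$.

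The steps I expect to need the most care are the exclusion of $u_0$ (ensuring no branched cover of a trivial cylinder sneaks in without a negative end) and the verification that the lone end of a plane is genuinely contractible, so that the convexity estimate $\mu\ge 3$ legitimately applies; once the global trivialization $\tau_0$ removes the $c_1$ contribution, the index arithmetic itself is entirely routine.
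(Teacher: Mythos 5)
Your proposal is correct, and its skeleton---rule out trivial cylinders, reduce to $\mathrm{ind}(u)=1$ via Proposition \ref{indexproperties}(3), compute the Fredholm index in the global trivialization where the relative Chern term vanishes, and use $\mu\geq 1$ to squeeze the combinatorics---is the same as the paper's. The two arguments part ways only at the endgame. The paper tacitly takes $u$ connected, concludes from $1=2g-2+\sum_i(\mu(\gamma_i)+1)$ that there is a single positive end with $g=0$ and $\mu(\gamma_1)=2$, and then invokes Lemma \ref{conleyindex} to see that an orbit of Conley--Zehnder index $2$ is non-contractible, contradicting $[\alpha]=0$. You instead work component by component (so you do not need connectedness), isolate the only dangerous case---a plane---and observe that a plane exhibits its asymptotic orbit as contractible, so dynamical convexity forces $\mu\geq 3$ and that component's index is at least $2$; your contradiction is then purely with $\mathrm{ind}(u)=1$, and the hypothesis $[\alpha]=0$ enters only to make $I(\alpha,\emptyset)$ well defined. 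Your version is marginally more self-contained (it bypasses Lemma \ref{conleyindex}(2),(3) and the homological step) and treats disconnected curves explicitly, a point the paper's proof glosses over; the paper's version is shorter because it lets Lemma \ref{conleyindex} absorb the case analysis. Both are valid.
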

\begin{proof}[\bf Proof of Lemma \ref{emptyindex1}]

Suppose that $\mathcal{M}^{J}(\alpha,\emptyset)\neq \emptyset$. Choose $u \in \mathcal{M}^{J}(\alpha,\emptyset)$ and let $\{\gamma_{i}\}_{i=1,...k}$ denote the set consisting of all orbits to which the positive ends of $u$ are asymptotic. Note that $\gamma_{i}$ may not be simple. Since $u$ is embedded and of Fredholm index 1, we have
    \begin{equation}
        \mathrm{ind}(u)=1=-\chi(u)+\sum_{1\leq i \leq k}\mu(\gamma_{i})=2g-2+\sum_{1\leq i \leq k}(\mu(\gamma_{i})+1).
    \end{equation}
    Since $\mu(\gamma_{i})+1\geq 2$, we have $k=1$. Therefore we  have 
\begin{equation}
    1=2g+\mu(\gamma_{1})-1.
\end{equation}
This implies that $\mu(\gamma_{1})=2$ and $g=0$. It follows from Lemma \ref{conleyindex} that $\gamma_{1}$ is non-contractible and hence $[\alpha]\neq 0$. This contradicts the assumption. We complete the proof.
\end{proof}

\begin{lem}\label{Umap}
Assume that $(L(p,p-1),\lambda)$ is non-degenerate dynamically convex contact manifold with $\mathrm{Ker}\lambda=\xi_{\mathrm{std}}$.
    Let $\alpha$ be an ECH generator with $I(\alpha,\emptyset)=2$ and $[\alpha]=0$. If $\langle U_{J,z}\alpha,\emptyset \rangle \neq 0
$, then there exists an simple orbit  $\gamma$ with $\mu(\gamma)=1$ and $\mu(\gamma^{p})=3$ for which $\alpha=(\gamma,p)$, otherwise there exist  simple orbits $\gamma_{1},\gamma_{2}$ with $\mu(\gamma_{1})=\mu(\gamma_{2})=1$ for which $\alpha=(\gamma_{1},1)\cup (\gamma_{2},1)$. Moreover any element  $u \in \mathcal{M}^{J}(\alpha,\emptyset)$ is of genus $0$ and $\pi(u)$ is a global surface of section for $X_{\lambda}$.
\end{lem}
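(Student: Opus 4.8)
The plan is to extract a single $J$-holomorphic curve from the nonvanishing of the $U$-map count, pin down its topological type via the Fredholm index computed in the global trivialization $\tau_{0}$, and then read off the orbit structure from the partition conditions together with Lemma \ref{conleyindex}.

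First I would produce the curve. Since $\langle U_{J,z}\alpha,\emptyset\rangle\neq 0$, there is $u\in\mathcal{M}_{2}^{J}(\alpha,\emptyset)$ passing through $(0,z)$. Because the negative asymptotics are empty, there can be no $\mathbb{R}$-invariant cylinders in $u$, so $u=u_{1}$; by Proposition \ref{indexproperties}(4), $u$ is embedded, somewhere injective, with $\mathrm{ind}(u)=2$. I would then show $u$ is connected: a proper subcurve $u'$ is a curve from some orbit set $\alpha'$ to $\emptyset$, and $H_{2}(Y;\alpha',\emptyset)\neq\emptyset$ forces $[\alpha']=0$; since each component has $I\geq 1$ by Proposition \ref{indexproperties}(1), a splitting would produce a component with $I=1$ and $[\alpha']=0$, contradicting Lemma \ref{emptyindex1}. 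Computing the Fredholm index in $\tau_{0}$, where $c_{1}(\xi|_{[u]},\tau_{0})=0$ and there are no negative ends, gives $2=-\chi(u)+\sum_{k}\mu(\gamma_{k}^{+})$, i.e.
\begin{equation*}
4=2g+\sum_{k=1}^{N}\big(\mu(\gamma_{k}^{+})+1\big),
\end{equation*}
where $g$ is the genus and $\gamma_{1}^{+},\dots,\gamma_{N}^{+}$ are the positive ends. Since $\mu(\gamma_{k}^{+})\geq 1$ by Lemma \ref{conleyindex}(1), the only possibilities are $(g,N)=(0,1)$ with $\mu(\gamma_{1}^{+})=3$, $(0,2)$ with $\mu(\gamma_{1}^{+})=\mu(\gamma_{2}^{+})=1$, or $(1,1)$ with $\mu(\gamma_{1}^{+})=1$.

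Next I would rule out the genus-one case using the partition conditions. Here $\alpha=(\alpha_{1},m_{1})$ and the single end is $\alpha_{1}^{m_{1}}$ with $\mu(\alpha_{1}^{m_{1}})=1$; the subcase $m_{1}=1$ is impossible because a simple orbit of index $1$ is noncontractible (Lemma \ref{conleyindex}) while $[\alpha]=0$, and for $m_{1}\geq 2$ Lemma \ref{conleyindex} forces $\alpha_{1}$ elliptic with monodromy angle $\theta\in(0,1/m_{1})$ (as $\theta>0$ by Lemma \ref{conleyindex}(1)), whose outgoing partition is $(1,\dots,1)$, incompatible with a single end of multiplicity $m_{1}\geq 2$ by Proposition \ref{partitioncondition}. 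For the two-end case I would likewise exclude two ends over one simple orbit: an ECH generator cannot carry a hyperbolic orbit with multiplicity $2$, and if $\alpha_{1}$ is elliptic then $\alpha_{1}^{2}$ is contractible, so dynamical convexity gives $\mu(\alpha_{1}^{2})\geq 3$, hence $\theta\geq 1/2$, whose outgoing partition is $(2)$ and again forbids the partition $(1,1)$. Thus the two ends lie over distinct simple orbits and $\alpha=(\gamma_{1},1)\cup(\gamma_{2},1)$ with $\mu(\gamma_{i})=1$. In the remaining one-end case $u$ is an embedded plane asymptotic to the contractible orbit $\alpha_{1}^{m_{1}}$ with $\mu_{\mathrm{disk}}(\alpha_{1}^{m_{1}})=3$.

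It then remains to identify the simple orbits with members of $\mathcal{S}_{p}$ and to prove the section property. For the one-end case, $\pi(u)$ is an immersed disk, embedded on its interior, whose boundary is an $m_{1}$-fold cover of $\alpha_{1}$, exhibiting $\alpha_{1}$ as $m_{1}$-unknotted; the asymptotic and relative-adjunction analysis of the embedded end computes the self-linking number and, together with the fact that $\pi(u)$ is a disk-like section of $L(p,p-1)$, forces $m_{1}=p$ and $\mathrm{sl}(\alpha_{1})=-\tfrac{1}{p}$, so $\alpha_{1}\in\mathcal{S}_{p}$; then $\mu(\alpha_{1}^{p})=3$ yields $\mu(\alpha_{1})=1$ (elliptic with $\theta\in[1/p,2/p)$, or $p=3$ hyperbolic). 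The analogous analysis in the two-end case gives $\gamma_{1},\gamma_{2}\in\mathcal{S}_{p}$. Finally, transversality of $X_{\lambda}$ to $\pi(u)\setminus\partial$ is immediate from $d\lambda(\cdot,J\cdot)>0$, and the global-section property follows because the nonvanishing $U$-count persists through every generic basepoint, so the $\mathbb{R}$-translates of these $I=2$ curves, together with the cylinders over the binding, foliate $\mathbb{R}\times Y$ and every Reeb trajectory meets $\pi(u)$ in both forward and backward time; this is the ECH incarnation of the Hofer--Wysocki--Zehnder argument. I expect the genuine difficulty to be precisely this last geometric step together with the extraction of $p$-unknottedness and self-linking $-\tfrac{1}{p}$ from the asymptotics, since the index and partition bookkeeping is essentially formal.
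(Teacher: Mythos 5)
Your overall strategy (index bookkeeping in the global trivialization, then partition conditions plus Lemma \ref{conleyindex}, then a foliation argument) matches the paper's, and your connectedness argument via Lemma \ref{emptyindex1} is a point the paper leaves implicit. But there is a genuine gap in the two-end case, and it stems from your choice of identity: you use the Fredholm index, which constrains $\mu$ of each \emph{end} $\gamma_{i}^{q_{i}}$ separately, whereas the paper uses $I(u)-J_{0}(u)=\sum_{i}\mu(\gamma_{i}^{m_{i}})$ together with the $J_{0}$ equality, which constrains $\mu$ of the \emph{total} cover $\gamma_{i}^{m_{i}}$ and the number of ends per simple orbit. Concretely: (i) your exclusion of two ends over one simple orbit rests on the claim that $\alpha_{1}^{2}$ is contractible, which is false for general $p$ (the hypothesis only gives $m_{1}[\alpha_{1}]=0$ in $\mathbb{Z}/p\mathbb{Z}$), and it tacitly assumes both ends have multiplicity $1$; (ii) more importantly, when the two ends lie over \emph{distinct} simple orbits you simply assert $\alpha=(\gamma_{1},1)\cup(\gamma_{2},1)$, but your index count only yields $\alpha=(\gamma_{1},q_{1})\cup(\gamma_{2},q_{2})$ with one end at each and $\mu(\gamma_{i}^{q_{i}})=1$; nothing you have written rules out $q_{i}\geq 2$. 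The paper closes exactly this case with the partition condition: $q_{i}\geq 2$ forces $\gamma_{i}$ elliptic, $\mu(\gamma_{i}^{q_{i}})=2\lfloor q_{i}\theta\rfloor+1=1$ forces $\theta\in(0,1/q_{i})$, whose outgoing partition of $q_{i}$ is $(1,\dots,1)$, so there would have to be $q_{i}\geq 2$ ends at $\gamma_{i}$, contradicting that there is one. Running this for each simple orbit also repairs (i).

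The remaining soft spots are the step $m_{1}=p$ and the global-section property, which you flag but do not actually prove. The paper's route is: SFT compactness together with Lemma \ref{emptyindex1} (no curves to $\emptyset$ with $I=1$) shows $\mathcal{M}^{J}(\alpha,\emptyset)/\mathbb{R}$ is compact, hence a circle of pairwise disjoint embedded curves whose projections form, by \cite[Proposition 3.2, 3.3]{CHP}, a rational open book of global sections; $m_{1}=p$ then falls out because $\pi_{1}$ of the total space of that open book is $\mathbb{Z}/m_{1}\mathbb{Z}$. Your observation that the $U$-count persists through every generic basepoint gives a curve through every generic point, but not by itself the foliation, the pairwise disjointness, or the uniform forward and backward return times; compactness of the moduli space is the missing ingredient, and it is supplied by the same Lemma \ref{emptyindex1} you already invoked for connectedness.
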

\begin{proof}[\bf Proof of Lemma \ref{Umap}]
Write $\alpha=\{(\gamma_{i},m_{i})\}_{i=1,...k}$. Let $u \in \mathcal{M}^{J}(\alpha,\emptyset)$ be  a $J$-holomorphic curve counted by $U_{z,J}$. Then  we have
\begin{equation}
    I(u)-J_{0}(u)=\sum_{i=1,...k}\mu(\gamma_{i}^{m_{i}}).
\end{equation}
Therefore
\begin{equation}
    2=-\chi(u)+\sum_{i=1,...k}(n_{i}-1)+\sum_{i=1,...k}\mu(\gamma_{i}^{m_{i}})=2g-2+\sum_{i=1,...k}(\mu(\gamma_{i}^{m_{i}})+2n_{i}-1)
\end{equation}
where $n_{i}$ is the number of positive ends of $u$ asymptotic to $\gamma_{i}$ with some multiplicities and $g$ is the genus of $u$. Since $\mu(\gamma_{i}^{m_{i}})+2n_{i}-1\geq 2$ for $i=1,...k$, we have $k=1$ or $k=2$.

Suppose that $k=1$. Then we have $5=2g+\mu(\gamma_{1}^{m_{1}})+2n_{1}$. Note that $g\geq0$, $m_{1}, n_{1}\geq1$ and $\gamma_{1}$ is simple. 
If $n_{1}\geq 2$, we have $g=0$ and $1=\mu(\gamma_{1}^{m_{1}})$. Since $\lambda$ is dynamically convex, $\gamma_{1}^{m_{1}}$ is a non-contractible elliptic orbit and hence $[\alpha]\neq 0$. This contradicts the assumption. Therefore  $n_{1}=1$ and hence $3=2g+\mu(\gamma_{1}^{m_{1}})$. In the same way, we have $g=0$ and so $3=\mu(\gamma^{m})$.
 In order to complete the proof of Lemma \ref{Umap} in the case of $k=1$, we have to show $m=p$, but since additional discussion is needed, we would leave it for later.

Suppose that $k=2$. Then $6=2g+\mu(\gamma_{1}^{m_{1}})+\mu(\gamma_{2}^{m_{2}})+2n_{1}+2n_{2}$.  Since  $g\geq0$ and $m_{i}, n_{i}\geq1$, we have $n_{1}=n_{2}=1$, $g=0$ and $\mu(\gamma_{1}^{m_{1}})=\mu(\gamma_{2}^{m_{2}})=1$. We have to show $m_{1}=m_{2}=1$.
From Lemma \ref{conleyindex}, if $p=2$, $\gamma_{1}^{m_{1}}$ and $\gamma_{2}^{m_{2}}$ are simple elliptic orbits and hence we have $m_{1}=m_{2}=1$. Therefore we may assume that $p>2$. 
Without loss of generality, we consider $\gamma_{1}$ and $m_{1}$. Suppose  that $m_{1}\geq 2$. By the definition of ECH generator, $\gamma_{1}$ is elliptic. Therefore there is $\theta \in \mathbb{R}\backslash \mathbb{Q}$ such that $\mu(\gamma_{1}^{m})=2 \lfloor m\theta \rfloor+1$ for every $m\in \mathbb{Z}_{>0}$. 
Since $\mu(\gamma_{1}^{m_{1}})=2 \lfloor m_{1}\theta \rfloor+1=1$, we have  $2 \lfloor i\theta \rfloor+1=1$ for every $1\leq i \leq m_{1}$. Hence $\lfloor m_{1}\theta \rfloor=m_{1}\lfloor \theta \rfloor$. It follows from the partition condition (Proposition \ref{partitioncondition}) that each multiplicity of positive end of $u$ asymptotic to $\gamma_{1}$ is $1$ and hence the number of ends asymptotic to $\gamma_{1}$ is $m_{1}$. This implies that $n_{1}=m_{1} \geq 2$. This contradicts $n_{1}=1$. Therefore we have $m_{1}=1$. In the same way, we have $m_{2}=1$.

Next, we show that for any element  $u \in \mathcal{M}^{J}(\alpha,\emptyset)$,  $\pi(u)$ is a global surface of section for $X_{\lambda}$.
Since $I(u)=2$, $u\in \mathcal{M}^{J}(\alpha,\emptyset)$ is embedded.

According to \cite[Proposition 3.2]{CHP}, in order to prove that $\pi(u)$ is a global surface of section, it is sufficient to show that $\mathcal{M}^{J}(\alpha,\emptyset)/\mathbb{R}$ is compact. Indeed, if $\mathcal{M}^{J}(\alpha,\emptyset)/\mathbb{R}$ is compact, $u \in \mathcal{M}^{J}(\alpha,\emptyset)$ satisfies all of the  assumptions of  \cite[Proposition 3.2]{CHP}. 

Suppose that $\mathcal{M}^{J}(\alpha,\emptyset)/\mathbb{R}$ is not compact. Let $\overline{\mathcal{M}^{J}(\alpha,\emptyset)/\mathbb{R}}$ denote the compactified space  of $\mathcal{M}^{J}(\alpha,\emptyset)/\mathbb{R}$ in the sense of SFT compactness. Choose $\Bar{u}\in \overline{\mathcal{M}^{J}(\alpha,\emptyset)/\mathbb{R}} \backslash \mathcal{M}^{J}(\alpha,\emptyset)/\mathbb{R}$. $\Bar{u}$ consists of some $J$-holomorphic curves in several floors. Let $u_{0}$ be the component of $\Bar{u}$ in the lowest floor. Then there is an orbit set $\beta$ such that $u_{0}\in \mathcal{M}^{J}(\beta,\emptyset)/\mathbb{R}$. By the additivity of ECH index, we have $I(\beta,\emptyset)=1$. This contradicts Lemma \ref{emptyindex1}. So $\mathcal{M}^{J}(\alpha,\emptyset)/\mathbb{R}$ is compact.
    
Summarizing the discussion so far, under the assumption of Lemma \ref{Umap}, we have
\item[(1).]There is a contractible simple orbit $\gamma$ such that $\alpha=(\gamma,m)$ and $3=\mu(\gamma^{m})$. Moreover any $u\in \mathcal{M}^{J}(\alpha,\emptyset)$ is of genus $0$ and $\pi(u)$ is a global surface of section whose boundary is $\gamma^{m}$.

\item[(2).]There is non-contractible simple orbits $\gamma_{1},\gamma_{2}$ such that $\alpha=(\gamma_{1},1)\cup(\gamma_{2},1)$ and $1=\mu(\gamma_{1})=\mu(\gamma_{2})$. Moreover any $u\in \mathcal{M}^{J}(\alpha,\emptyset)$ is of genus $0$ and $\pi(u)$ is a global surface of section whose boundary is $\gamma_{1}\cup \gamma_{2}$.

    Now, it is sufficient to show that $m=p$ in (1) to complete the proof.  Take a section $s:\mathcal{M}^{J}(\alpha,\emptyset)/\mathbb{R}\to \mathcal{M}^{J}(\alpha,\emptyset)$. Then $\bigcup_{\tau\in \mathcal{M}^{J}(\alpha,\emptyset)/\mathbb{R}}\overline{\pi(s(\tau))} \to \mathcal{M}^{J}(\alpha,\emptyset)/\mathbb{R}$ is an (rational) open book decomposition of $L(p,p-1)$ (see \cite[Proposition 3.2, Proposition 3.3]{CHP}). Note that $\mathcal{M}^{J}(\alpha,\emptyset)/\mathbb{R}\cong S^{1}$. It is easy to compute that the fundamental group of $\bigcup_{\tau\in \mathcal{M}^{J}(\alpha,\emptyset)/\mathbb{R}}\overline{\pi(s(\tau))}$ is isomorphic to $\mathbb{Z}/m\mathbb{Z}$, which means $m=p$. This completes the proof.
\end{proof}
 Let $Z:L(p,p-1)\to \xi_{\mathrm{std}}$ denote a non-vanishing global section satisfying $\tau_{0}(Z(x))=(x,1)\in L(p,p-1)\times \mathbb{C}$ where $\tau_{0}$ is the fixed global trivialization.
 
 For a simple orbit $\gamma:\mathbb{R}/T_{\gamma}\mathbb{Z}\to L(p,p-1)$ (which is parameterized) and sufficiently small $\epsilon_{\gamma}>0$ there is an open neighborhood $\gamma(\mathbb{R}/T_{\gamma}\mathbb{Z})\subset U_{\gamma} \subset L(p,p-1)$, an open neighborhood $0 \in V_{\gamma}\subset \mathbb{R}^{2}$ and a diffeomorphism $\phi_{\gamma}:S^{1} \times V_{\gamma}\to U_{\gamma}$ satisfying  $\phi_{\gamma}(t,0)=\gamma_{i}(T_{\gamma}t)$ and $\phi_{\gamma}^{*}\lambda=f_{\gamma}\lambda_{0}$
where $f_{\gamma}$ is a positive map $f_{\gamma}:S^{1} \times V_{\gamma} \to \mathbb{R}$ with $f_{\gamma}(t,0,0)=T_{\gamma}$, $df_{\gamma}(t,0,0)=0$ and $\lambda_{0}$ denotes the standard contact structure on $\mathbb{R}^{3}$. In addition, we can take $\phi_{\gamma}$ satisfying $(\phi_{\gamma})_{*}(\partial_{x})=Z$ on $\gamma$. See \cite{HWZ1}. 

It is convenient to see $J$-holomorphic curves in this coordinate.

\begin{prp}\label{asymptics}\cite[cf.Theorem 1.3]{HWZ1}
    Let $(a,u):([0,\infty)\times S^{1},j_{0})\to (\mathbb{R}\times L(p,p-1),J)$ is a $J$-holomorphic curve such that $u(s,t)\to\gamma(pT_{\gamma}t)$ for $p>0$ where $j_{0}(\partial_{s})=\partial_{t}$. Take a coordinate  $\phi_{\gamma}:S^{1} \times V_{\gamma}\to U_{\gamma}$ defined above and write $\phi^{-1}_{\gamma}(u(s,t))=(\theta(s,t),z(s,t))$
    If $\gamma^{p}$ is non-degenerate, there is constant $c$ and $d>0$ such that
    \begin{equation*}
        |\partial^{\beta}(a(s,t)-T_{\gamma}s-c)|\leq Me^{-ds},\,\,|\partial^{\beta}(\theta(s,t)-pt)|\leq Me^{-ds}.
    \end{equation*}
    for all multi-indices $\beta$ with constants $M=M_{\beta}$.
    
    Moreover, there is negative eigenvalue $\lambda\in \sigma(L_{S_{\phi_{\gamma,\tau}\circ \rho_{k}}})$, eigenfunction $e_{\lambda}$ of $L_{S_{\phi_{\gamma,\tau}\circ \rho_{k}}}$ with eigenvalue $\lambda$, and  sufficiently small $\epsilon>0$  such that
    \begin{equation*}
        |\partial^{\beta}(z(s,t)-e^{\lambda s}e_{\lambda}(t))|\leq Me^{\lambda-\epsilon}.
    \end{equation*}
    where $L_{S_{\phi_{\gamma,\tau}\circ \rho_{k}}}$ is the operator defined in subsection 2.1.
\end{prp}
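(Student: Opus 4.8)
The plan is to reduce the statement to the asymptotic analysis of Hofer--Wysocki--Zehnder by rewriting the half-cylinder in the Martinet coordinates $\phi_{\gamma}$ fixed just above. First I would set $w(s,t):=\phi_{\gamma}^{-1}(u(s,t))=(\theta(s,t),z(s,t))$ and keep the $\mathbb{R}$-component $a(s,t)$, so that the $J$-holomorphic equation becomes a nonlinear Cauchy--Riemann system for the triple $(a,\theta,z)$, where I use $\phi_{\gamma}^{*}\lambda=f_{\gamma}\lambda_{0}$ together with the normalizations $f_{\gamma}(t,0)=T_{\gamma}$ and $df_{\gamma}(t,0)=0$. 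The convergence hypothesis $u(s,t)\to\gamma(pT_{\gamma}t)$, combined with interior elliptic estimates and bootstrapping, first yields that $z(s,t)\to 0$ in $C^{\infty}(S^{1})$ uniformly in $t$ with all derivatives bounded; this is what lets one treat the nonlinear part of the equation as a small, quadratically controlled perturbation of its linearization.

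The heart of the matter is the transverse component $z$. I would show that $\xi(s):=z(s,\cdot)\in L^{2}(S^{1},\mathbb{C})$ satisfies an evolution equation of the schematic form $\xi'(s)=-A\,\xi(s)+g(s)$, where $A:=L_{S_{\phi_{\gamma,\tau}\circ\rho_{p}}}$ is exactly the asymptotic operator from Subsection 2.1 and $g$ collects the nonautonomous and nonlinear error terms, with $\|g(s)\|$ dominated by $\|\xi(s)\|$ times quantities tending to $0$ as $s\to\infty$. Since $\gamma^{p}$ is nondegenerate, $0\notin\sigma(A)$, giving a spectral gap about the origin. Following the HWZ method, I would monitor the Rayleigh-type quotient $\langle A\xi,\xi\rangle/\|\xi\|^{2}$ and show that it converges as $s\to\infty$ to an element of $\sigma(A)$; because $\xi(s)\to 0$ rather than growing, this limiting eigenvalue must be negative, and the spectral gap forces it to be the largest negative one, $\lambda=\eta^{<}$. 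This simultaneously produces the decay $\|\xi(s)\|\le Ce^{\lambda s}$ and, after projecting onto the $\lambda$-eigenspace, the refined leading term $z(s,t)=e^{\lambda s}\bigl(e_{\lambda}(t)+o(1)\bigr)$ with remainder $O(e^{(\lambda-\epsilon)s})$ for any $\epsilon$ smaller than the gap to the next eigenvalue, which is the final estimate of the proposition; bootstrapping through the equation promotes these bounds to all derivatives.

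With $z$ controlled, the components $a$ and $\theta$ follow by feeding the exponential decay of $z$ into their equations: the $J$-holomorphic system expresses $\partial_{s}a-T_{\gamma}$ and the deviations $\partial_{s}\theta$, $\partial_{t}\theta-p$ through $z$ and exponentially small errors, using precisely $f_{\gamma}(t,0)=T_{\gamma}$ and $df_{\gamma}(t,0)=0$, so that integration in $s$ gives $a(s,t)=T_{\gamma}s+c+O(e^{-ds})$ and $\theta(s,t)=pt+O(e^{-ds})$ in every $C^{k}$. The identification of $A$ as the asymptotic operator of the $p$-fold cover is where I would invoke Proposition \ref{conleycovering}: the relevant symplectic path is $\phi_{\gamma,\tau}\circ\rho_{p}$, and the inclusion $\sigma(L_{S_{\phi_{\gamma,\tau}}})\subset\sigma(L_{S_{\phi_{\gamma,\tau}\circ\rho_{p}}})$ proved there pins down the spectrum governing the decay.

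The main obstacle is the nonlinear, nonautonomous analysis behind the decay step, namely controlling the remainder $g(s)$ uniformly and proving that the Rayleigh quotient actually converges to an eigenvalue of $A$ in the presence of the $s$-dependent errors; this is exactly the content of \cite[Theorem 1.3]{HWZ1}, and the nondegeneracy of $\gamma^{p}$ (equivalently $0\notin\sigma(A)$) is indispensable, since the resulting gap both drives the exponential decay and singles out the leading eigenfunction $e_{\lambda}$. In practice the cleanest route is therefore to verify that the coordinates $\phi_{\gamma}$ put the positive end of $u$ into the standard form required by \cite{HWZ1} and then apply their asymptotic theorem directly, rather than reproving the estimate from scratch.
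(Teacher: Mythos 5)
Your proposal matches the paper's treatment: the paper gives no proof of Proposition \ref{asymptics} beyond the citation \cite[Theorem 1.3]{HWZ1}, and your final recommendation---verify that the Martinet-type coordinates $\phi_{\gamma}$ put the positive end of $u$ into the standard form required by \cite{HWZ1} and then apply their asymptotic theorem directly---is exactly what the paper does, while your sketch of the internal Rayleigh-quotient argument is a faithful outline of that theorem's proof. One small caveat in the sketch: the limiting eigenvalue need not be the largest negative eigenvalue $\eta^{<}$ (nondegeneracy only provides a spectral gap around $0$, not a selection of $\eta^{<}$), but this is harmless here since the proposition asserts only that $\lambda$ is some negative element of $\sigma(L_{S_{\phi_{\gamma,\tau}\circ\rho_{p}}})$.
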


\begin{lem}\label{selflinkingproof1}
    Suppose that $\alpha$ in Lemma \ref{Umap}  can be written as $\alpha=(\gamma,p)$ for a simple orbit $\gamma$. Then $\gamma \in \mathcal{S}_{p}$. 
\end{lem}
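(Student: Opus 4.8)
The plan is to read off both the $p$-unknottedness of $\gamma$ and the value $\mathrm{sl}(\gamma)=-\tfrac1p$ directly from the curve $u\in\mathcal{M}^{J}((\gamma,p),\emptyset)$ furnished by Lemma \ref{Umap}, whose projection $\pi(u)$ is a disk-like global surface of section bounding $\gamma^{p}$. First, for $p$-unknottedness: by Lemma \ref{Umap} the curve $u$ has genus $0$, a single positive end asymptotic to $\gamma^{p}$, and no negative ends, so its domain compactifies to a closed disk $\overline{\mathbb{D}}$. The composition $\pi\circ u:\overline{\mathbb{D}}\to L(p,p-1)$ is then the required map: its restriction to $\mathrm{int}(\mathbb{D})$ is embedded because $\pi(u)$, as a global surface of section, is embedded away from its boundary, while $\pi\circ u|_{\partial\mathbb{D}}$ parametrizes $\gamma^{p}$ and is therefore a $p$-fold covering map onto $\gamma$. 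That $\pi\circ u$ is an immersion up to the boundary follows from Proposition \ref{asymptics}, which controls the approach to $\gamma^{p}$ by a nonvanishing asymptotic eigenfunction. Hence $\gamma$ is $p$-unknotted, with $\pi(u)$ as its spanning disk.

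Next I would compute the self-linking number using this rational Seifert disk together with a convenient section. Since $\mathrm{sl}(\gamma)$ is independent of the nonvanishing section of $u^{*}\xi$ used to form the push-off, I take $Z$ to be the restriction of the global section satisfying $\tau_{0}(Z(x))=(x,1)$; in the coordinates $\phi_{\gamma}$ the $Z$-direction is the constant direction $1$, so the push-off $\gamma_{\epsilon}$ is exactly the $p$-fold cover of a single embedded loop $\delta_{\epsilon}$ sitting at constant $\xi$-direction $1$. By Proposition \ref{asymptics}, near its end $u$ approaches $\gamma^{p}$ like $e^{\lambda s}e_{\lambda}(t)$, where $\lambda<0$ is the largest negative eigenvalue of the asymptotic operator $L_{S_{\phi_{\gamma,\tau}\circ\rho_{p}}}$, and the governing winding number is $w(S_{\phi_{\gamma,\tau}\circ\rho_{p}},\lambda)=w^{<}$. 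Because $\gamma^{p}$ is contractible with $\mu(\gamma^{p})=\mu_{\mathrm{disk}}(\gamma^{p})=3$, the two winding numbers of the asymptotic operator satisfy $w^{<}+w^{\geq}=3$ with $w^{\geq}-w^{<}\in\{0,1\}$, which forces $w^{<}=1$.

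Finally I would count intersections. For small $\epsilon$ the loop $\delta_{\epsilon}$ stays in the tubular neighborhood of $\gamma$ and meets $\pi(u)$ only near the binding, precisely at the parameters where $e_{\lambda}$ points in the direction $1$; since $e_{\lambda}$ is nonvanishing with winding $w^{<}=1$, there is exactly one such transverse crossing. Thus the algebraic intersection of $\delta_{\epsilon}$ with $u$ is $\pm1$, and because $\gamma_{\epsilon}$ covers $\delta_{\epsilon}$ with multiplicity $p$, the algebraic intersection number of $\gamma_{\epsilon}$ with $u$ is $\pm p$, so that $\mathrm{sl}(\gamma)=\tfrac{1}{p^{2}}(\pm p)=\pm\tfrac1p$.

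I expect the sign to be the genuine obstacle, since the magnitude $\tfrac1p$ is forced by the winding and covering data but the orientation of the crossing still has to be pinned down. I would settle it either via the standard orientation conventions for the spiraling of a positive holomorphic end relative to the contact framing, or—more robustly—by lifting through the universal cover $\rho:S^{3}\to L(p,p-1)$: there $\gamma$ lifts to a transverse unknot $\tilde{\gamma}$ bounding the lifted disk with $\mu_{\mathrm{disk}}(\tilde{\gamma})=\mu(\gamma^{p})=3$, whence $\mathrm{sl}(\tilde{\gamma})=-1$ by the $S^{3}$ theory, and the covering relation $\mathrm{sl}(\gamma)=\tfrac1p\,\mathrm{sl}(\tilde{\gamma})$ gives $\mathrm{sl}(\gamma)=-\tfrac1p$. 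Combined with the first paragraph, this yields $\gamma\in\mathcal{S}_{p}$.
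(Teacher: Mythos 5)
Your overall architecture coincides with the paper's: take the curve $u\in\mathcal{M}^{J}((\gamma,p),\emptyset)$ furnished by Lemma \ref{Umap}, use the embedded disk $\pi(u)$ both to certify $p$-unknottedness and as the rational Seifert surface, and reduce $\mathrm{sl}(\gamma)$ to the winding number of the asymptotic eigenfunction at the positive end. The unknottedness paragraph is fine. The genuine gap is in the winding computation. You correctly deduce $w^{<}=1$ from $\mu(\gamma^{p})=3$, but you then assert that the eigenvalue $\lambda$ governing the asymptotics of $u$ is the \emph{largest} negative eigenvalue of the asymptotic operator, so that the asymptotic winding equals $w^{<}$. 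Proposition \ref{asymptics} only provides \emph{some} negative eigenvalue; monotonicity of $w(S,\cdot)$ then yields only the upper bound $\mathrm{wind}\leq w^{<}=1$. A winding of $0$ would make the intersection count, and hence $\mathrm{sl}(\gamma)$, come out to $0$, so the upper bound alone proves nothing. What is missing is the matching lower bound. The paper obtains the equality $\mathrm{wind}=1$ by citing the proof of Lemma 3.3 in \cite{CHP}, where it follows from the nonnegativity of the algebraic count of zeros of the projection of $du$ to $\xi$ for a genus-zero curve with a single positive end (equivalently, from the fact that nearby leaves of the foliation by $I=2$ curves cannot intersect). Some such argument must be supplied; it does not follow from the index computation alone.

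On the sign: the paper simply asserts that the intersection number is $-p$, again on the strength of the \cite{CHP} computation, so your honesty about the sign being the delicate point is warranted. Your proposed fallback of lifting to $S^{3}$ and combining $\mathrm{sl}(\tilde{\gamma})=-1$ with a covering formula $\mathrm{sl}(\gamma)=\frac{1}{p}\,\mathrm{sl}(\tilde{\gamma})$ is plausible but, as stated, imports two further unproved facts (that the connected lift $\tilde{\gamma}$ with its lifted disk falls under the $S^{3}$ result, and the behavior of the rational self-linking number under the $p$-fold cover), so it does not by itself close the argument either.
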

\begin{proof}[\bf Proof of Lemma \ref{selflinkingproof1}]
Since  for $ u \in \mathcal{M}^{J}(\alpha,\emptyset)$ $\pi(u)\subset Y$ is embedded, $\gamma $ is $p$-unknotted and we can determine the self-linking number by $\pi(u)$. Let $\phi_{\gamma}:S^{1}\times \mathbb{D}_{\epsilon_{\gamma}}\to L(p,p-1)$ be a local coordinate defined above. According to \cite[Proof of Lemma 3.3]{CHP}, the winding number of the eigenfunction dominating $u$ as in Proposition \ref{asymptics} is $1$. This means that the algebraic intersection number of  $\mathrm{exp}_{\gamma(pT_{\gamma}t)}(\epsilon Z(\gamma(pT_{\gamma}t)))$ with $\pi(u)$ is $-p$.
Therefore by the definition of (rational) self linking number, we have $\mathrm{sl}(\gamma)=-\frac{1}{p}$. This completes the proof.
\end{proof}

\begin{lem}\label{muzukasiiyo2}
     Suppose that $\alpha$ in Lemma \ref{Umap}  can be written as  $\alpha=(\gamma_{1},1)\cup (\gamma_{2},1)$ for simple orbits $\gamma_{1},\gamma_{2}$. Then $\gamma_{1},\gamma_{2} \in \mathcal{S}_{p}$
\end{lem}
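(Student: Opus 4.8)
The plan is to run the same strategy as in Lemma \ref{selflinkingproof1}, but the new complication is that the embedded surface $\pi(u)$ produced in Lemma \ref{Umap} is a genus-$0$ global surface of section with two boundary components $\gamma_{1}$ and $\gamma_{2}$, i.e.\ an annulus, rather than a disk; so neither $\gamma_{1}$ nor $\gamma_{2}$ is directly seen to bound a spanning $p$-disk. First I would record what is already in hand: by Lemma \ref{Umap} any $u\in \mathcal{M}^{J}(\alpha,\emptyset)$ has genus $0$, $\pi(u)$ is embedded, and by \cite[Proposition 3.2, Proposition 3.3]{CHP} the family $\bigcup_{\tau}\overline{\pi(s(\tau))}\to \mathcal{M}^{J}(\alpha,\emptyset)/\mathbb{R}\cong S^{1}$ is a rational open book decomposition of $L(p,p-1)$ whose binding is $\gamma_{1}\cup\gamma_{2}$ and whose pages are these annuli. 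The goal is to promote each binding component to an element of $\mathcal{S}_{p}$.

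Next I would pin down the topology of this annular open book, mimicking the $\pi_{1}$-computation that gave $m=p$ at the end of Lemma \ref{Umap}. Computing the fundamental group of the total space from the open book data (an annular page together with its monodromy) and using $\pi_{1}(L(p,p-1))=\mathbb{Z}/p\mathbb{Z}$ forces the monodromy to be $p$ Dehn twists and forces each binding class $[\gamma_{i}]$ to have order exactly $p$; in particular $\gamma_{i}^{p}$ is contractible. This is also a consistency check, since the annular open book with $p$ Dehn twists is exactly the one whose total space is $L(p,p-1)$. To exhibit the immersed spanning $p$-disk required for $p$-unknottedness I would then pass to the $p$-fold cover $\rho:S^{3}\to L(p,p-1)$. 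Since $[\gamma_{i}]$ has order $p$, the preimage $\tilde\gamma_{i}=\rho^{-1}(\gamma_{i})$ is a single simple orbit with $\rho|_{\tilde\gamma_{i}}$ a $p$-fold cover, and the lift $\tilde u$ of $u$ is a genus-$0$ curve in $\mathbb{R}\times S^{3}$ whose projection is an embedded annulus giving a global surface of section for $(S^{3},\rho^{*}\lambda)$ with boundary $\tilde\gamma_{1}\cup\tilde\gamma_{2}$. An annular open book of $S^{3}$ has monodromy a single Dehn twist, as $\pi_{1}(S^{3})$ is trivial, so its binding $\tilde\gamma_{1}\cup\tilde\gamma_{2}$ is a Hopf link and each $\tilde\gamma_{i}$ is an unknot bounding an embedded disk $\tilde D_{i}$; pushing $\tilde D_{i}$ down by $\rho$, and checking via the deck action that its interior stays embedded, produces the desired immersion $\mathbb{D}\to L(p,p-1)$ wrapping $p$ times around $\gamma_{i}$, so $\gamma_{i}$ is $p$-unknotted.

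Finally, with $p$-unknottedness established, the self-linking number is computed exactly as in Lemma \ref{selflinkingproof1}: near each end $\gamma_{i}$ the asymptotics of Proposition \ref{asymptics} are controlled by an eigenfunction whose winding number is $1$ by \cite[Proof of Lemma 3.3]{CHP}, so the pushed-off orbit $\mathrm{exp}_{\gamma_{i}(pT_{\gamma_{i}}t)}(\epsilon Z(\gamma_{i}(pT_{\gamma_{i}}t)))$ meets $\pi(u)$ with algebraic intersection $-p$ and hence $\mathrm{sl}(\gamma_{i})=-\tfrac{1}{p}$; thus $\gamma_{1},\gamma_{2}\in\mathcal{S}_{p}$. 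The hard part will be the middle step: because the annulus bounds neither orbit, the $p$-unknottedness of the individual binding orbits is not automatic, in contrast to the disk case of Lemma \ref{selflinkingproof1} where the spanning disk is self-evident. One must genuinely extract the individual spanning $p$-disks from the open-book/covering-space structure and verify that the disk descended from the cover has embedded interior, which is where the $\mathbb{Z}/p\mathbb{Z}$ symmetry and the order-$p$ conclusion of the $\pi_{1}$-computation are essential.
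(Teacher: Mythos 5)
Your overall strategy diverges from the paper's: the paper does not pass to the cover $S^{3}\to L(p,p-1)$ at all, but instead proves Theorem \ref{familyofhol}, which upgrades $\mathcal{M}^{J}(\alpha,\emptyset)/\mathbb{R}\cong S^{1}$ to an explicit embedded $S^{1}$-family of cylinders $u_{\tau}$ whose positive asymptotic marker rotates $p$ times around $\gamma_{1}$ as $\tau$ traverses $S^{1}$; the rational Seifert disk for $\gamma_{1}$ is then built directly downstairs as $v(re^{2\pi\tau})=u_{\tau}(\log r,0)$, i.e.\ by sweeping a single ray through the family of annular pages, and its interior is embedded because the whole family is. Your route has a genuine gap exactly at the step you flag as ``the hard part'': after lifting the annular open book to $S^{3}$ and concluding that $\tilde\gamma_{i}$ is an unknot, you take \emph{some} embedded disk $\tilde D_{i}$ bounded by $\tilde\gamma_{i}$ and push it down by $\rho$. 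But a generic Seifert disk for $\tilde\gamma_{i}$ meets its deck translates in its interior, so $\rho(\tilde D_{i})$ will not have embedded interior; the $\mathbb{Z}/p\mathbb{Z}$-invariance of $\tilde\gamma_{i}$ does not by itself give you a disk whose interior is disjoint from its translates. Producing such a disk is precisely the content that Theorem \ref{familyofhol} supplies, and your proposal does not replace it with an actual argument.

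The final self-linking computation is also incorrect in detail. You quote the winding number $1$ from Lemma \ref{selflinkingproof1}, but that value pertains to a multiplicity-$p$ end at an orbit with $\mu(\gamma^{p})=3$; here each $u_{\tau}$ has a multiplicity-one end at $\gamma_{i}$ with $\mu(\gamma_{i})=1$, and the relevant asymptotic winding number is $0$ (this is what the paper uses). Consequently the boundary of the swept disk sits on $\partial(\text{tubular neighborhood of }\gamma_{1})$ as a $(p,1)$- or $(p,-1)$-torus knot, giving $\mathrm{sl}(\gamma_{1})=-\tfrac{1}{p}$ or $+\tfrac{1}{p}$ with an unresolved sign; the paper eliminates the $+\tfrac{1}{p}$ case by the rational Bennequin inequality for the universally tight $\xi_{\mathrm{std}}$, a step your proposal omits entirely. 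Moreover you compute the intersection of the pushoff with $\pi(u)$, the annular page, whereas $\mathrm{sl}(\gamma_{i})$ must be computed against a rational Seifert surface for $\gamma_{i}$ alone (the $p$-disk), not against a surface with two boundary components.
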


In order to prove Lemma \ref{muzukasiiyo2}, we fix the parameters $\gamma_{1}:\mathbb{R}/T_{1}\mathbb{Z}\to Y$, $\gamma_{2}:\mathbb{R}/T_{2}\mathbb{Z}\to Y$. Here we write the periods of $\gamma_{1}$, $\gamma_{2}$ as  $T_{1}$, $T_{2}$ for simplicity. 

The next theorem is used to construct Seifert surface to compute their self-linking number and will be roved in Section 5.

\begin{them}\label{familyofhol}
Assume that $(L(p,p-1),\lambda)$ is non-degenerate dynamically convex contact manifold with $\mathrm{Ker}\lambda=\xi_{\mathrm{std}}$.
    Let $\alpha$ be an ECH generator with $I(\alpha,\emptyset)=2$ and $[\alpha]=0$. Suppose that $\langle U_{J,z}\alpha,\emptyset \rangle\neq 0$ and $\alpha$   can be written as  $\alpha=(\gamma_{1},1)\cup (\gamma_{2},1)$ for simple orbits $\gamma_{1},\gamma_{2}$.
Then there is a smooth map $\Tilde{u}=(a,u):S^{1}\times \mathbb{R}\times S^{1}\to \mathbb{R}\times Y$ satisfying the following properties. 

\item[1.] $u:S^{1}\times \mathbb{R}\times S^{1}\to Y$ is  embedding.
\item[2.] For any $\tau\in S^{1}$, $\Tilde{u}(\tau,\cdot,\cdot)=\Tilde{u}_{\tau}=(a_{\tau},u_{\tau})\in \mathcal{P}^{J}$ with $u(s,t)\to\gamma_{1}(T_{1}(t+p\tau))$ as $s\to +\infty$ and $u(s,t)\to\gamma_{1}(-T_{2}t)$ as $s\to -\infty$
\item[3.]   $S^{1}\ni \tau \mapsto [\Tilde{u}_{\tau}]\in \mathcal{M}^{J}(\alpha,\emptyset)/\mathbb{R}$ is a diffeomoprhism map where $ [\Tilde{u}_{\tau}]$ is the equivalent class containing $\Tilde{u}_{\tau}$.
\end{them}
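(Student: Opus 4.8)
The plan is to identify $\mathcal{M}^{J}(\alpha,\emptyset)/\mathbb{R}$ with a single circle and then to build an explicit smooth parameterization of the curves it contains whose $Y$-components foliate $L(p,p-1)\setminus(\gamma_{1}\cup\gamma_{2})$. Recall from Lemma \ref{Umap} that every $u\in\mathcal{M}^{J}(\alpha,\emptyset)$ is embedded, has genus $0$, carries exactly one positive puncture simply covering $\gamma_{1}$ and one negative puncture simply covering $\gamma_{2}$, and projects to a global surface of section; in particular its domain is a twice-punctured sphere, i.e.\ a cylinder $\mathbb{R}\times S^{1}$, which is precisely what makes each such curve a candidate for some $\tilde{u}_{\tau}$. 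The map $\tilde{u}$ to be constructed is then nothing but a smooth choice of parameterized representatives $\tilde{u}_{\tau}=(a_{\tau},u_{\tau})$ as $\tau$ ranges over $\mathcal{M}^{J}(\alpha,\emptyset)/\mathbb{R}$, together with the assertion that the $Y$-components assemble into an embedding.

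First I would show that $\mathcal{M}^{J}(\alpha,\emptyset)$ is a smooth $2$-manifold on which the $\mathbb{R}$-translation acts freely, so that $\mathcal{M}^{J}(\alpha,\emptyset)/\mathbb{R}$ is a smooth $1$-manifold. Somewhere injectivity holds since all multiplicities are $1$ and $\gamma_{1}\neq\gamma_{2}$, so standard transversality applies for generic $J$; as these are index-$2$, genus-$0$, embedded curves in a three-dimensional symplectization, I would in addition invoke automatic transversality, which shows that the nearby curves foliate a neighborhood in $\mathbb{R}\times Y$ and that their projections foliate a neighborhood in $Y$. Compactness of $\mathcal{M}^{J}(\alpha,\emptyset)/\mathbb{R}$ was already obtained in the proof of Lemma \ref{Umap} from Lemma \ref{emptyindex1} and SFT compactness, so the quotient is a compact $1$-manifold, hence a finite disjoint union of circles.

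Next I would upgrade the local foliation to a global one and deduce connectedness. Using Hutchings' relative intersection theory (relative adjunction together with the writhe bound), two curves in $\mathcal{M}^{J}(\alpha,\emptyset)$ that are not $\mathbb{R}$-translates of one another have disjoint images under $\pi$, while the asymptotic analysis of Proposition \ref{asymptics} keeps their interiors off $\gamma_{1}\cup\gamma_{2}$. Hence the projections $\{\pi(u)\}$ partition the connected set $L(p,p-1)\setminus(\gamma_{1}\cup\gamma_{2})$ into embedded pages, and the resulting ``which page'' map $L(p,p-1)\setminus(\gamma_{1}\cup\gamma_{2})\to\mathcal{M}^{J}(\alpha,\emptyset)/\mathbb{R}$ is a continuous surjection from a connected space, forcing $\mathcal{M}^{J}(\alpha,\emptyset)/\mathbb{R}\cong S^{1}$. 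To produce $\tilde{u}$ itself, I would fix the domain identification of each curve by normalizing the negative end, requiring $u(s,t)\to\gamma_{2}(-T_{2}t)$ as $s\to-\infty$, and by choosing a smooth section of the trivial $\mathbb{R}$-bundle $\mathcal{M}^{J}(\alpha,\emptyset)\to S^{1}$ to fix the target translation; the exponential estimates of Proposition \ref{asymptics} (with smooth dependence of the asymptotic constants on the curve, as in \cite{HWZ1}) guarantee that these normalizations vary smoothly, yielding a smooth $\tilde{u}:S^{1}\times\mathbb{R}\times S^{1}\to\mathbb{R}\times Y$. That $u$ is an embedding then follows by combining embeddedness of each page (Lemma \ref{Umap}), disjointness of distinct pages, and surjectivity onto $L(p,p-1)\setminus(\gamma_{1}\cup\gamma_{2})$: the map is a bijection onto an open set which is a local diffeomorphism in foliation charts.

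The main obstacle I expect is the precise monodromy statement that the positive end rotates exactly $p$ times, i.e.\ that the asymptotic phase at $\gamma_{1}$ is $p\tau$ as $\tau$ traverses $S^{1}$. Once the $\gamma_{2}$-phase is normalized to $0$, the $\gamma_{1}$-phase defines a smooth map $S^{1}\to\mathbb{R}/\mathbb{Z}$ whose degree $d$ must be computed, and reparameterizing $\tau$ reduces the statement to $d=p$. I would pin down $d$ from the rational open book structure near the binding $\gamma_{1}$: the asymptotic eigenfunction controlling the positive end has winding number $1$ (the same computation that underlies Lemma \ref{selflinkingproof1} and \cite[Lemma 3.3]{CHP}), while the mapping torus description of $L(p,p-1)\setminus(\gamma_{1}\cup\gamma_{2})$ as an annulus bundle over $S^{1}$ must be diffeomorphic to the binding complement of $L(p,p-1)$, which is compatible only with monodromy equal to $p$ Dehn twists; equivalently $[\gamma_{1}]$ has order $p$ in $H_{1}(L(p,p-1))\cong\mathbb{Z}/p\mathbb{Z}$. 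This identification of $d$ with $p$, together with ensuring smoothness of the whole family up to and around the binding, is the delicate part of the argument, and both rest on the uniform exponential control provided by Proposition \ref{asymptics}.
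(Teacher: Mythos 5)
Your overall architecture (identify $\mathcal{M}^{J}(\alpha,\emptyset)/\mathbb{R}$ with a circle, pick a smooth family of parameterized representatives normalized at the $\gamma_{2}$-end, and compute the degree of the resulting $\gamma_{1}$-phase map topologically from the lens-space structure obtained by gluing the two solid tori around $\gamma_{1}$ and $\gamma_{2}$) matches the paper's Section 6 in outline, and your degree computation is essentially the paper's Proposition \ref{doublecove}. The genuine gap is at the step ``reparameterizing $\tau$ reduces the statement to $d=p$.'' A smooth degree-$p$ map $S^{1}\to S^{1}$ can be reparameterized at the source to $\tau\mapsto p\tau$ only if it is a covering map, i.e.\ only if its derivative never vanishes; a degree-$p$ map with critical points or folds cannot be normalized this way, and conclusions 2 and 3 of the theorem together demand exactly the covering form. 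Nothing in your proposal rules out critical points of the relative phase $\iota=p_{1}-p_{2}$.

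That non-vanishing is precisely the content of the paper's Proposition \ref{diffdouble}, and it is the technical heart of Section 6: one sets up the moduli problem \`a la Dragnev on a Banach manifold whose extra $\mathbb{R}^{4}$ of coordinates records the asymptotic phases and $\mathbb{R}$-shifts at the two ends, shows that the full linearization $F_{\Tilde{h}}$ has index $4$ while the restricted operator $D_{\Tilde{h}}$ has index $0$ (Proposition \ref{fredind}), so that for generic $J$ the projection $\mathrm{pr}:\mathrm{Ker}F_{\Tilde{h}}\to\mathbb{R}^{4}$ is an isomorphism; the three fiber directions (target $\mathbb{R}$-translation and the $s$- and $t$-translations of the domain) have images $(0,0,1,1)$, $(0,0,T_{1},-T_{2})$ and $(1,1,0,0)$, and linear independence of the fourth, section, direction from these forces $p_{1}'-p_{2}'\neq 0$. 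Your proposal contains no ingredient playing this role: automatic transversality, compactness and disjointness of pages give you a smooth compact connected $1$-dimensional quotient moduli space, but they say nothing about how the asymptotic marker at $\gamma_{1}$ moves relative to the one at $\gamma_{2}$ as you traverse that circle. Without this, your construction produces a family whose positive phase map has degree $p$ but need not be of the form $\gamma_{1}(T_{1}(t+p\tau))$, so the theorem as stated is not established. The rest of your sketch (the $S^{1}$ identification via connectedness, disjointness via intersection theory, embeddedness of the assembled map, and the topological identification of the degree with $p$) is consistent with the paper's argument.
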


\begin{proof}[\bf Proof of Lemma \ref{muzukasiiyo2}]
Without loss of generality, we consider $\gamma_{1}$.  Let $\Tilde{u}=(a,u):S^{1}\times \mathbb{R}\times S^{1}\to \mathbb{R}\times Y$  be as in Theorem \ref{familyofhol}. We define a continuous map $v:\mathbb{C} \to L(p,p-1)$ by $v(re^{2\pi\tau})=u_{\tau}(\mathrm{log}r,0)$ and $v(0)=\gamma_{2}(0)$. Then by the construction, we have $v(re^{2\pi\tau})\to \gamma_{1}(pT_{1}\tau)$ as $r\to +\infty$ and moreover this map is immersion and injective other than $0$ and  $\gamma_{1}$. In order to obtain Seifert surface to compute the self-linking number, we change $v$ near $\gamma_{2}(0)$ and $\gamma_{1}$

Take the local coordinates $\phi_{\gamma_{1}}:S^{1}\times \mathbb{D}_{\epsilon_{\gamma_{1}}}\to L(p,p-1)$ as before. Fix $\tau \in S^{1}$. According to \cite[Proof of Lemma 3.3]{CHP}, for any sufficiently small $\epsilon>0$, the winding number of the eigenfunction dominating $u_{\tau}$ as in Proposition \ref{asymptics} is $0$. Therefore, by moving $\tau \in S^{1}$, we can see that  for any sufficiently small $\epsilon>0$, $\phi_{\gamma_{1}}^{-1}(\phi_{\gamma_{1}}(S^{1}\times \partial \mathbb{D}_{\epsilon})\cap{\pi(v(\mathbb{C}))})$ is $(p,1)$-torus knot or $(p,-1)$-torus knot. 

Fix a sufficiently small $\epsilon>0$.  We consider a surface constructed by connecting $\phi_{\gamma_{1}}(S^{1}\times \partial \mathbb{D}_{\epsilon})\cap{\pi(v(\mathbb{C}))}$ with $\gamma_{1}$ by an isotopy. And we connect  it smoothly with the outer part of $v(\mathbb{C})$ together and change small neighborhood of $v(0)$ to be embedded. We write the surface as $F\subset L(p,p-1)$. By the construction, this is a (rational) Seifert surface of disk type binding $\gamma_{1}$ with multiplicity $p$. Moreover by the construction $\mathrm{sl}(\gamma_{1})=-\frac{1}{p}$ or $\frac{1}{p}$ which depends on whether $\phi_{\gamma_{1}}^{-1}(\phi_{\gamma_{1}}(S^{1}\times \partial \mathbb{D}_{\epsilon})\cap{\pi(v(\mathbb{C}))})$ is $(p,1)$-torus knot or $(p,-1)$-torus knot.  On the other hand, since $\xi_{\mathrm{std}}$ is universally tight and $\gamma_{1}$ is transversal knot, we can apply 
 (rational) Bennequin inequality to this (see \cite{BE}). Therefore we have $\mathrm{sl}(\gamma_{1})\leq -\frac{1}{p}\chi(F)=-\frac{1}{p}$. This implies $\mathrm{sl}(\gamma_{1})=-\frac{1}{p}$. We complete the proof.
\end{proof}

\section{Proof of Theorem \ref{maintheorem} in non-degenerate cases}
The purpose of this section is to prove Theorem \ref{maintheorem} under non-degeneracy condition.
It is easy to check that Theorem \ref{maintheorem}  under non-degenerate condition follows from the next propositions. We prove them respectively.
\begin{prp}\label{prp1}
    Assume that $(L(p,p-1),\lambda)$ is non-degenerate and dynamically convex with $\mathrm{Ker}\lambda=\xi_{\mathrm{std}}$. Then there exists  $\gamma \in \mathcal{S}_{p}$ satisfying $\mu(\gamma)=1$ and
\begin{equation}
    \int_{\gamma}\lambda\leq \frac{1}{2}\,c_{1}^{\mathrm{ECH}}(L(p,p-1),\lambda).
\end{equation}
\end{prp}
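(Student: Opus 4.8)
The plan is to locate the required orbit inside a minimal-action cycle representing the generator of $\mathrm{ECH}_{2}$, and to use the $U$-map together with Lemma \ref{emptyindex1} to force that generator to have nonzero pairing with $\emptyset$.

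\textbf{Identifying $c_1^{\mathrm{ECH}}$.} First I would make the spectral invariant concrete. By Proposition \ref{lenssp}, $\mathrm{ECH}_{2}(L(p,p-1),\lambda,0)\cong\mathbb{F}$; let $\sigma$ be its nonzero generator. Since $U\colon\mathrm{ECH}_{2}\to\mathrm{ECH}_{0}$ is an isomorphism and $c(\xi_{\mathrm{std}})$ generates $\mathrm{ECH}_{0}\cong\mathbb{F}$, we get $U\sigma=c(\xi_{\mathrm{std}})$, so by Definition \ref{echspect} the only class with $U\sigma=c(\xi_{\mathrm{std}})$ is this $\sigma$ and hence $c_{1}^{\mathrm{ECH}}(L(p,p-1),\lambda)=c_{\sigma}^{\mathrm{ECH}}(L(p,p-1),\lambda)$. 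Because $\lambda$ is non-degenerate, the set of actions of orbit sets is discrete, so the spectral value is realized by a cycle: there is $x\in\mathrm{ECC}_{2}(L(p,p-1),\lambda,0)$ representing $\sigma$ all of whose generators $\alpha$ satisfy $A(\alpha)\le c_{1}^{\mathrm{ECH}}$.

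\textbf{Forcing $\emptyset$ to survive under $U_{J,z}$.} Next I would apply $U_{J,z}$. The chain $U_{J,z}x$ is a cycle in $\mathrm{ECC}_{0}$ representing $U\sigma=c(\xi_{\mathrm{std}})=[\emptyset]$, so $U_{J,z}x=\emptyset+\partial_{J}y$ for some $y\in\mathrm{ECC}_{1}$. The crucial point is that $\emptyset$ cannot occur in any boundary: every generator of $\mathrm{ECC}_{1}$ has $I(\cdot,\emptyset)=1$, and by Lemma \ref{emptyindex1} its differential has zero coefficient on $\emptyset$, so $\langle\partial_{J}y,\emptyset\rangle=0$ and therefore $\langle U_{J,z}x,\emptyset\rangle=1$. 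Expanding $U_{J,z}x=\sum_{\alpha}U_{J,z}\alpha$ over the generators $\alpha$ occurring in $x$, at least one of them must satisfy $\langle U_{J,z}\alpha,\emptyset\rangle\neq0$; this $\alpha$ has $I(\alpha,\emptyset)=2$, $[\alpha]=0$ and $A(\alpha)\le c_{1}^{\mathrm{ECH}}$.

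\textbf{Extracting $\gamma$.} Finally I would feed this $\alpha$ into Lemma \ref{Umap}. In the first case $\alpha=(\gamma,p)$ with $\gamma$ simple, $\mu(\gamma)=1$, and $\gamma\in\mathcal{S}_{p}$ by Lemma \ref{selflinkingproof1}; then $p\int_{\gamma}\lambda=A(\alpha)\le c_{1}^{\mathrm{ECH}}$, and since $p\ge2$ we obtain $\int_{\gamma}\lambda\le\frac1p c_{1}^{\mathrm{ECH}}\le\frac12 c_{1}^{\mathrm{ECH}}$. In the second case $\alpha=(\gamma_{1},1)\cup(\gamma_{2},1)$ with $\gamma_{1},\gamma_{2}\in\mathcal{S}_{p}$ by Lemma \ref{muzukasiiyo2} and $\mu(\gamma_{1})=\mu(\gamma_{2})=1$; choosing $\gamma\in\{\gamma_{1},\gamma_{2}\}$ of smaller action gives $\int_{\gamma}\lambda\le\frac12\bigl(\int_{\gamma_{1}}\lambda+\int_{\gamma_{2}}\lambda\bigr)=\frac12 A(\alpha)\le\frac12 c_{1}^{\mathrm{ECH}}$. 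In either case $\gamma$ is the desired orbit.

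Given the structural lemmas already in hand, the substantive work of this proposition is bookkeeping rather than new geometry; the two points requiring care are the realization of $c_{\sigma}^{\mathrm{ECH}}$ by a genuinely finite-action cycle (using discreteness of the action spectrum in the non-degenerate case) and the use of Lemma \ref{emptyindex1} to guarantee that $\emptyset$ appears with nonzero coefficient in $U_{J,z}x$. The real difficulty sits upstream, in Lemmas \ref{Umap}, \ref{selflinkingproof1}, and \ref{muzukasiiyo2}, which are assumed here.
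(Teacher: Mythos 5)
Your proposal is correct and follows essentially the same route as the paper: the paper's Lemma \ref{existenceofhol} is exactly your "forcing $\emptyset$ to survive under $U_{J,z}$" step (a minimal-action cycle representing the generator of $\mathrm{ECH}_{2}$, the isomorphism $U\colon\mathrm{ECH}_{2}\to\mathrm{ECH}_{0}$, and Lemma \ref{emptyindex1} to rule out $\emptyset$ appearing in any boundary), after which Lemmas \ref{Umap}, \ref{selflinkingproof1}, and \ref{muzukasiiyo2} are invoked just as you do, with the same case split and the same $p\ge 2$ versus minimum-of-two-actions bookkeeping to get the factor $\tfrac12$. Your phrasing of the $U$-map step via $U_{J,z}x=\emptyset+\partial_{J}y$ is a slightly cleaner packaging of the same argument, not a different method.
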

\begin{prp}\label{prp2}
    Assume that $(L(2,1),\lambda)$ is non-degenerate and dynamically convex. Then
    \begin{equation}
    \frac{1}{2}\,c_{1}^{\mathrm{ECH}}(L(2,1),\lambda) \leq \inf_{\gamma\in \mathcal{S}_{2},\mu(\gamma)=1}\int_{\gamma}\lambda
\end{equation}
\end{prp}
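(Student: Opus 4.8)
The plan is to combine the computation of $\mathrm{ECH}(L(2,1),\lambda,0)$ in Proposition \ref{lenssp} with the geometric input of Theorem \ref{fundament}. First I would record that, by Proposition \ref{lenssp}, $\mathrm{ECH}_0(L(2,1),\lambda,0)=\mathbb{F}\langle c(\xi_{\mathrm{std}})\rangle$, $\mathrm{ECH}_2(L(2,1),\lambda,0)\cong\mathbb{F}$, and $U\colon\mathrm{ECH}_2\to\mathrm{ECH}_0$ is an isomorphism. Hence there is a unique nonzero class $\sigma$ with $U\sigma=c(\xi_{\mathrm{std}})$, and since $c(\xi_{\mathrm{std}})\neq0$ lives in degree $0$ while $U$ lowers the grading by $2$, Definition \ref{echspect} gives $c_1^{\mathrm{ECH}}(L(2,1),\lambda)=c_\sigma^{\mathrm{ECH}}(L(2,1),\lambda)$. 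Recalling that $c_\sigma^{\mathrm{ECH}}$ is the infimal $L$ at which $\sigma$ lies in the image of $\mathrm{ECH}^{L}\to\mathrm{ECH}$, the proposition reduces to the claim that for every $\gamma\in\mathcal{S}_2$ with $\mu(\gamma)=1$ the orbit set $(\gamma,2)$ is a cycle whose homology class is $\sigma$: granting this, $\sigma\in\mathrm{Im}(\mathrm{ECH}^{L}\to\mathrm{ECH})$ for every $L>A((\gamma,2))$, so $c_1^{\mathrm{ECH}}=c_\sigma^{\mathrm{ECH}}\le A((\gamma,2))=2\int_\gamma\lambda$, and taking the infimum over such $\gamma$ yields the stated inequality.

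Next I would check that $(\gamma,2)$ is a genuine degree-$2$ generator. By Lemma \ref{conleyindex}(2), $\gamma$ is a simple non-contractible elliptic orbit, so $(\gamma,2)$ is an admissible ECH generator, and since $[\gamma]$ is $2$-torsion in $H_1(L(2,1))$ we have $[(\gamma,2)]=0$. Writing the monodromy angle as $\theta$, the conditions $\mu(\gamma)=2\lfloor\theta\rfloor+1=1$ together with dynamical convexity for the contractible orbit $\gamma^2$ force $\theta\in(1/2,1)$, hence $\mu(\gamma^2)=3$. That $I((\gamma,2),\emptyset)=2$ then follows from a direct computation of $c_1(\xi|_Z,\tau_0)+Q_{\tau_0}(Z)$ against $\mu(\gamma)+\mu(\gamma^2)=4$ using the global trivialization $\tau_0$ (equivalently, from the index of the curve produced below). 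For the cycle condition I would argue as in Lemma \ref{emptyindex1}: any $\beta$ with $\langle\partial(\gamma,2),\beta\rangle\neq0$ carries an embedded, genus-$0$, index-$1$ curve whose unique positive end is the doubly covered $\gamma$ (the partition condition of Proposition \ref{partitioncondition} gives $P^{\mathrm{out}}_\theta(2)=(2)$ for $\theta\in(1/2,1)$), and the same $\chi$/index bookkeeping used in Lemmas \ref{emptyindex1} and \ref{Umap}, combined with $[\beta]=0$ and dynamical convexity, rules out the negative ends.

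The heart of the matter is to show $U[(\gamma,2)]=c(\xi_{\mathrm{std}})$, equivalently $[(\gamma,2)]=\sigma\neq0$. Here I would invoke Theorem \ref{fundament}: since $\gamma\in\mathcal{S}_2$, the orbit $\gamma^2$ is the binding of a rational open book whose pages are disk-like global surfaces of section. For an almost complex structure $J$ adapted to this open book, the pages should lift to a smooth $S^1$-family of embedded genus-$0$, index-$2$ $J$-holomorphic curves with a single positive end at $(\gamma,2)$, giving $\mathcal{M}^J((\gamma,2),\emptyset)/\mathbb{R}\cong S^1$ (the converse viewpoint to the one taken from \cite{CHP} in Lemma \ref{Umap}). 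Since the pages foliate $L(2,1)\setminus\gamma$, exactly one of them meets a generic base point, so $\langle U_{J,z}(\gamma,2),\emptyset\rangle=1$ and $U[(\gamma,2)]=c(\xi_{\mathrm{std}})$; by $J$-independence of $U$ on homology this is independent of the adapted choice, which closes the argument.

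I expect this last step to be the main obstacle: converting the dynamical global surface of section furnished by Theorem \ref{fundament} into an honest, transversally cut-out family of $J$-holomorphic curves of ECH index $2$, and computing the $U$-count through a point. Concretely one must select $J$ compatible with the open book, verify that the pages are $J$-holomorphic with the prescribed asymptotics at the doubly covered orbit via the asymptotic analysis of Proposition \ref{asymptics}, confirm that the Fredholm and ECH indices both equal $2$, and reconcile the possibly non-generic adapted $J$ with the generic $J$ defining $U$ through the $J$-invariance of ECH and of its spectral invariants. A secondary subtlety is the cycle condition for $(\gamma,2)$, which, although it should follow from the index estimates above, demands the same care as Lemma \ref{emptyindex1}.
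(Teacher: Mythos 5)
Your overall strategy coincides with the paper's (Lemmas \ref{index2}--\ref{nonzero}): show that $(\gamma,2)$ is a degree-$2$ cycle, detect that its class is the generator of $\mathrm{ECH}_{2}$ by pairing $U_{J,z}(\gamma,2)$ with $\emptyset$ via the holomorphic open book furnished by Theorem \ref{fundament}, and conclude $c_{1}^{\mathrm{ECH}}\leq A((\gamma,2))=2\int_{\gamma}\lambda$. The index computation, the partition $P^{\mathrm{out}}_{\theta}(2)=(2)$ for $\theta\in(1/2,1)$, and the $U$-count through a generic point all match the paper's Lemmas \ref{index2} and \ref{Umapnonzero}.

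The genuine gap is the cycle condition $\partial_{J}(\gamma,2)=0$, which you assert follows from ``the same $\chi$/index bookkeeping'' together with $[\beta]=0$ and dynamical convexity. It does not. The bookkeeping in Lemma \ref{emptyindex1} works only because there are no negative ends, so every puncture contributes $\mu+1\geq 2$ to $\mathrm{ind}$; a negative puncture contributes $1-\mu\leq 0$ and destroys the bound. Concretely, take $\beta=(h,1)\cup(e,1)$ with $h$ a non-contractible positive hyperbolic orbit of index $2$ and $e$ a non-contractible elliptic orbit of index $1$ (both permitted by Lemma \ref{conleyindex}, both representing the generator of $H_{1}(L(2,1))\cong\mathbb{Z}/2\mathbb{Z}$, so $[\beta]=0$): a genus-zero curve with one positive end at $\gamma^{2}$ and negative ends at $h$ and $e$ has $\mathrm{ind}=1+3-2-1=1$, satisfies every partition condition, and is excluded by nothing you invoke. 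The paper's Lemma \ref{cocycle} rules out all such curves by a global intersection argument: it pairs the curve with a page $v(\mathbb{D})$ of the open book binding $\gamma^{2}$, shows the contribution near the positive end is non-positive (the asymptotic eigenfunction has winding number at most $1$), the interior contribution is non-negative (positivity of intersections with $\mathbb{R}\times\gamma$), and the contribution of each negative end is strictly positive (because $v(\mathbb{D})$ is a global surface of section), which is a contradiction. This uses Theorem \ref{fundament} a second time, in an essentially different way from your step on the $U$-map, and cannot be dispensed with. A smaller omission: $\langle U_{J,z}(\gamma,2),\emptyset\rangle=1$ yields $U[(\gamma,2)]=c(\xi_{\mathrm{std}})$ only after checking that the coefficient-of-$\emptyset$ functional descends to $\mathrm{ECH}_{0}$, which again rests on Lemma \ref{emptyindex1} (this is the content of the paper's Lemmas \ref{index1bound} and \ref{nonzero}); you have the ingredient in hand but the step should be stated.
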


\subsection{Proof of Proposition \ref{prp1}}
For a sum of ECH generators $\alpha_{1}+...+\alpha_{k}$ with $\partial_{J}(\alpha_{1}+...+\alpha_{k})=0$, we write $\langle \alpha_{1}+...+\alpha_{k} \rangle$ as the equivalence class in $ECH(Y,\lambda)$.

\begin{lem}\label{existenceofhol}
Let $\alpha_{1},...,\alpha_{k}$ be ECH generators with $[\alpha_{i}]=0$ and $I(\alpha_{i},\emptyset)=2$ for $i=1,...k$. Suppose that $\partial_{J}(\alpha_{1}+...+\alpha_{k})=0$ and $0 \neq \langle \alpha_{1}+...+\alpha_{k} \rangle 
\in  ECH_{2}(Y,\lambda,0)$. Then there exists $i$ such that $\langle U_{J,z}\alpha_{i},\emptyset \rangle\neq  0$.
\end{lem}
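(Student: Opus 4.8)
The plan is to run the entire argument through the one-dimensionality of the ECH of $L(p,p-1)$ together with the invertibility of the $U$-map, and then to transfer the resulting homological identity down to the chain level by means of Lemma~\ref{emptyindex1}. The homological identity $U\sigma=\langle\emptyset\rangle$ is forced purely by grading considerations; the real content is showing that this survives as a statement about the $\emptyset$-coefficient of an explicit chain.

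First I would record the homological input. By Proposition~\ref{lenssp} we have $\mathrm{ECH}_{2}(L(p,p-1),\lambda,0)\cong\mathbb{F}$ and $\mathrm{ECH}_{0}(L(p,p-1),\lambda,0)\cong\mathbb{F}$, the latter generated by the contact invariant $c(\xi_{\mathrm{std}})=\langle\emptyset\rangle\neq0$; moreover the $U$-map $U\colon\mathrm{ECH}_{2}\to\mathrm{ECH}_{0}$ is an isomorphism. Set $\sigma:=\langle\alpha_{1}+\cdots+\alpha_{k}\rangle$. By hypothesis $0\neq\sigma\in\mathrm{ECH}_{2}$, so $U\sigma\neq0$, and since $\mathrm{ECH}_{0}=\mathbb{F}\cdot\langle\emptyset\rangle$ we must have $U\sigma=\langle\emptyset\rangle$.

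Next I would pass to the chain level. Put $c:=U_{J,z}(\alpha_{1}+\cdots+\alpha_{k})=\sum_{i}U_{J,z}\alpha_{i}\in\mathrm{ECC}_{0}(L(p,p-1),\lambda,0)$. Since $U_{J,z}$ commutes with $\partial_{J}$ and $\partial_{J}(\alpha_{1}+\cdots+\alpha_{k})=0$, the chain $c$ is a cycle, and by the previous paragraph it represents $\langle\emptyset\rangle$. Hence, working over $\mathbb{F}=\mathbb{Z}/2$, there is $d\in\mathrm{ECC}_{1}(L(p,p-1),\lambda,0)$ with $c+\emptyset=\partial_{J}d$.

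The crux, and the step I expect to carry the weight, is to argue that $\emptyset$ cannot appear in the image of $\partial_{J}$. Every generator $\beta$ of $\mathrm{ECC}_{1}(L(p,p-1),\lambda,0)$ satisfies $[\beta]=0$ and $I(\beta,\emptyset)=1$, so Lemma~\ref{emptyindex1} gives $\mathcal{M}^{J}(\beta,\emptyset)=\emptyset$ and therefore $\langle\partial_{J}\beta,\emptyset\rangle=0$; by linearity $\langle\partial_{J}d,\emptyset\rangle=0$. Comparing the $\emptyset$-coefficients on the two sides of $c+\emptyset=\partial_{J}d$ then yields $\langle c,\emptyset\rangle=1$, i.e. $\sum_{i}\langle U_{J,z}\alpha_{i},\emptyset\rangle=1\neq0$ in $\mathbb{F}$. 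Consequently at least one summand $\langle U_{J,z}\alpha_{i},\emptyset\rangle$ is nonzero, which is exactly the desired conclusion.
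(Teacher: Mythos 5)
Your proof is correct and follows essentially the same route as the paper's: both arguments combine the isomorphism $U\colon\mathrm{ECH}_{2}\to\mathrm{ECH}_{0}$, the fact that $\mathrm{ECH}_{0}\cong\mathbb{F}$ is generated by $\langle\emptyset\rangle$, and Lemma~\ref{emptyindex1} to conclude that the $\emptyset$-coefficient of the degree-$0$ cycle $U_{J,z}(\alpha_{1}+\cdots+\alpha_{k})$ is a homological invariant equal to $1$. Your version of the final step (directly comparing $\emptyset$-coefficients in $c+\emptyset=\partial_{J}d$) is a cleaner packaging of the paper's linear-independence argument, but it is not a genuinely different proof.
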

\begin{proof}[\bf Proof of Lemma \ref{existenceofhol}]
Since $\mathrm{Ker}\lambda=\xi_{\mathrm{std}}$, $\emptyset+\mathrm{Im}(\partial_{J}|_{ECC_{1}(Y,\lambda,0)})$ is not zero in $ECH_{0}(Y,\lambda,0)$. It follows from Lemma \ref{emptyindex1} that if non zero element in $ECH_{0}(Y,\lambda,0)$ is represented as $\beta_{1}+...+\beta_{j}+\mathrm{Im}(\partial_{J}|_{ECC_{1}(Y,\lambda,0)})$ for some ECH generators $\beta_{1},...,\beta_{j}$, there is $k$ such that $\beta_{k}=\emptyset$ and $\sum_{i\neq k}\beta_{i}$ is in $\mathrm{Im}(\partial_{J}|_{ECC_{1}(Y,\lambda,0)})$. Indeed, if $\beta_{k}\neq \emptyset$ for every $1\leq k \leq j$, $\emptyset+\mathrm{Im}(\partial_{J}|_{ECC_{1}(Y,\lambda,0)})$ and $\beta_{1}+...+\beta_{j}+\mathrm{Im}(\partial_{J}|_{ECC_{1}(Y,\lambda,0)})$ are linearly independent in $ECH_{0}(Y,\lambda,0)$. Otherwise since $ECH_{2}(Y,\lambda,0)\cong \mathbb{F}$, $\emptyset+\beta_{1}+...+\beta_{j}+\mathrm{Im}(\partial_{J}|_{ECC_{1}(Y,\lambda,0)})$ is zero in $ECH_{0}(Y,\lambda,0)$ and so $\emptyset+\beta_{1}+...+\beta_{j}\in \mathrm{Im}(\partial_{J}|_{ECC_{1}(Y,\lambda,0)})$. This implies that there is an ECH generator $\sigma$ with $I(\sigma,\emptyset)=1$ such that $\mathcal{M}^{J}(\sigma,\emptyset)\neq \emptyset$, but this contradicts Lemma \ref{emptyindex1}.

Since $U:ECH_{2}(Y,\lambda,0)\to ECH_{0}(Y,\lambda,0)$ is isomorphism, we can see that $U_{J,z}(\alpha_{1}+...+\alpha_{k})+\mathrm{Im}(\partial_{J}|_{ECC_{1}(Y,\lambda,0)})$ is not zero in $ECH_{0}(Y,\lambda,0)$. Summarizing these arguments, if $\langle U_{J,z}\alpha_{i},\emptyset \rangle= 0$ for any $i$, we have $U_{J,z}(\alpha_{1}+...+\alpha_{k})\in \mathrm{Im}(\partial_{J}|_{ECC_{1}(Y,\lambda,0)})$. Therefore there is $i$ such that $\langle U_{J,z}\alpha_{i},\emptyset \rangle\neq  0$. We complete the proof.
\end{proof}

\begin{proof}
    Now we prove Proposition \ref{prp1}.  From the definition of $c_{1}^{\mathrm{ECH}}$, there are ECH generators $\alpha_{1},...,\alpha_{k}$ with $[\alpha_{i}]=0$, $I(\alpha_{i},\emptyset)=2$ and $A(\alpha_{i})\leq c_{1}^{\mathrm{ECH}}(L(p.p-1),\lambda)$ for $i=1,...k$ such that $\partial_{J}(\alpha_{1}+...+\alpha_{k})=0$ and $0 \neq \langle \alpha_{1}+...+\alpha_{k} \rangle 
\in  ECH_{2}(Y,\lambda,0)$. By Lemma \ref{existenceofhol}, we can find $i$ such that $\langle U_{J,z}\alpha_{i},\emptyset \rangle\neq  0$.  By combining with Lemma \ref{Umap} and Lemma \ref{muzukasiiyo2}, there are either $\gamma\in \mathcal{S}_{p}$ with $\mu(\gamma)=1$ for which $\alpha_{i}=(\gamma,p)$ or $\gamma_{1},\gamma_{2}\in \mathcal{S}_{p}$ with $\mu(\gamma_{1})=\mu(\gamma_{2})=1$ for which $\alpha_{i}=(\gamma_{1},1)\cup(\gamma_{2},1)$. In any cases, this implies that there is $\gamma\in \mathcal{S}_{p}$ with $\mu(\gamma)=1$ such that $ c_{1}^{\mathrm{ECH}}(L(p,p-1),\lambda)\geq 2\int_{\gamma}\lambda$. So we have $ \frac{1}{2}\,c_{1}^{\mathrm{ECH}}(L(p,p-1),\lambda)\geq \int_{\gamma}\lambda$. This completes the proof.
\end{proof}

\subsection{Proof of Proposition \ref{prp2}}
In this subsection, we focus on the case of $p=2$.
\begin{lem}\label{index2}
    For any $\gamma \in \mathcal{S}_{2}$ with $\mu(\gamma)=1$, $I((\gamma,2),\emptyset)=2$.
\end{lem}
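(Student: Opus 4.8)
The plan is to compute the ECH index $I((\gamma,2),\emptyset)$ directly from its definition, working throughout in the global trivialization $\tau_0$. Since $H_2(L(2,1))=0$, the relative class $Z\in H_2(L(2,1);(\gamma,2),\emptyset)$ is unique, so $I((\gamma,2),\emptyset)$ is unambiguous and
\begin{equation*}
I((\gamma,2),\emptyset)=c_1(\xi|_Z,\tau_0)+Q_{\tau_0}(Z)+\mu(\gamma)+\mu(\gamma^2).
\end{equation*}
I would handle the four terms in turn.

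First, the Conley--Zehnder terms. By Lemma \ref{conleyindex}(2), $\mu(\gamma)=1$ forces $\gamma$ to be a simple, non-contractible elliptic orbit, so there is an irrational $\theta$ with $\mu(\gamma^n)=2\lfloor n\theta\rfloor+1$. From $\mu(\gamma)=1$ we get $\lfloor\theta\rfloor=0$, i.e.\ $\theta\in(0,1)$; since $\gamma^2$ is contractible, dynamical convexity gives $\mu(\gamma^2)=\mu_{\mathrm{disk}}(\gamma^2)\ge 3$, which forces $\lfloor 2\theta\rfloor=1$ and hence $\mu(\gamma^2)=3$. Thus $\mu(\gamma)+\mu(\gamma^2)=4$. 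Next, because $\tau_0$ is a global symplectic trivialization of $\xi_{\mathrm{std}}$ over all of $L(2,1)$, the global section $Z(\cdot)$ with $\tau_0(Z(x))=(x,1)$ is a nowhere-vanishing section of $\xi|_Z$ having winding number $0$ along the boundary with respect to $\tau_0$; counting its zeros (there are none) gives $c_1(\xi|_Z,\tau_0)=0$.

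The heart of the matter is showing $Q_{\tau_0}(Z)=-2$, which I would do by identifying the relative self-intersection number with the intersection number defining the rational self-linking number. Represent $Z$ by the immersed disk $u:\mathbb{D}\to L(2,1)$ witnessing that $\gamma$ is $2$-unknotted, with $u|_{\mathrm{int}(\mathbb{D})}$ embedded and $u|_{\partial\mathbb{D}}$ a double cover of $\gamma$. Pushing the boundary braid off in the $\tau_0$-direction produces exactly the curve $\gamma_\epsilon(t)=\mathrm{exp}_{u(e^{2\pi i t})}(\epsilon Z(u(e^{2\pi i t})))$ used to define $\mathrm{sl}(\gamma)$, since $\tau_0$ is the framing associated to the section $Z$. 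Computing $Q_{\tau_0}(Z)$ from the $\tau_0$-pushoff of $u$ then reduces (no interior contribution arises because $u$ is embedded on the interior) to the algebraic intersection number of $\gamma_\epsilon$ with $u$, which by the definition of the rational self-linking number equals $p^2\,\mathrm{sl}(\gamma)=4\cdot(-\tfrac12)=-2$. Combining the four terms yields $I((\gamma,2),\emptyset)=0+(-2)+1+3=2$.

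The main obstacle is precisely this third step: carefully matching the four-dimensional relative self-intersection pairing $Q_{\tau_0}$ with the three-dimensional self-linking count, i.e.\ verifying that the writhe/linking contribution of the connected double-covering boundary braid in the $\tau_0$-framing is exactly $p^2\,\mathrm{sl}(\gamma)$ with no stray corrections. A useful check on the normalization is the analogous computation in $(S^3,\xi_{\mathrm{std}})$: a $1$-unknotted orbit with $\mathrm{sl}=-1$ and $\mu_{\mathrm{disk}}=3$ gives $Q_{\tau_0}(Z)=1^2\cdot(-1)=-1$ and $I((\gamma,1),\emptyset)=0+(-1)+3=2$, consistent with the grading of $\mathrm{ECH}(S^3,\xi_{\mathrm{std}})$ and with the role of such a disk as a global surface of section.
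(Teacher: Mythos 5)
Your proposal is correct and follows the same overall scheme as the paper's proof of Lemma \ref{index2}: work in the global trivialization $\tau_0$, so the relative Chern term vanishes; get $\mu(\gamma)+\mu(\gamma^2)=1+3=4$ (the paper asserts $\mu(\gamma^2)=3$, you derive it from Lemma \ref{conleyindex} and dynamical convexity, which is a welcome addition); and obtain $Q_{\tau_0}=-2$ from $\mathrm{sl}(\gamma)=-\tfrac12$. The one place where you genuinely diverge is the justification of $Q_{\tau_0}=-2$. The paper evaluates $Q_{\tau_0}(Z,Z)$ on two representatives given by two pages of the rational open book binding $\gamma$, which have disjoint interiors, so the whole count localizes at the boundary and is read off from the self-linking number; this implicitly leans on Theorem \ref{fundament} to produce the two disjoint pages. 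You instead use a single immersed disk $u$ witnessing $2$-unknottedness and its $\tau_0$-pushoff, so your argument needs only the definition of $\mathcal{S}_2$ and not the global-surface-of-section structure. The cost is that your parenthetical ``no interior contribution arises because $u$ is embedded on the interior'' is not quite right as stated: the pushoff of $u$ in the $Z$-direction is a section of a bundle that need not be the normal bundle of $u$, and the interior intersections compute a relative Euler number that is not automatically zero; what is true is that the \emph{total} count (interior plus boundary) equals the algebraic intersection number of $\gamma_\epsilon$ with $u$, i.e.\ $p^2\,\mathrm{sl}(\gamma)=-2$. This is exactly the normalization issue you flag as the main obstacle, and your $(S^3,\xi_{\mathrm{std}})$ sanity check confirms the sign; the paper's own treatment of this step is no more detailed than yours, so I would count the proposal as a complete proof at the same level of rigor, reached by a slightly more self-contained route.
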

\begin{proof}[\bf Proof of Lemma \ref{index2}]
    Consider ECH index $I((\gamma,2),\emptyset)$ by the global trivialization $\tau_{0}:\mathrm{Ker}\lambda=\xi_{\mathrm{std}}\to L(2.1)\times \mathbb{C}$. It easily follows from the global trivialization, the term of the relative first chern number in $I((\gamma,2),\emptyset)$ is $0$.
    
    Next consider the term $Q_{\tau_{0}}$. Refer to \cite[Lemma 8.5]{H1} for the definition of $Q_{\tau_{0}}$.   Since we can take two global surface of sections binding $\gamma$ in $L(2,1)$ which have no intersection point in their interior and moreover the rational self linking number is $-\frac{1}{2}$,  we have $Q_{\tau_{0}}=-2$.
    
    Finally, the term of Conley Zehnder index is 4 because $\mu(\gamma)=1$ and $\mu(\gamma^{2})=3$.

    Summarizing, it follows that $I((\gamma,2),\emptyset)=0+(-2)+4=2$.
\end{proof}

\begin{lem}\label{cocycle}
 For $\gamma\in \mathcal{S}_{2}$ with $\mu(\gamma)=1$, let $\alpha_{\gamma}=(\gamma,2)$.
     Then for any ECH generator $\beta$ with $I(\alpha_{\gamma},\beta)=1$, $\mathcal{M}^{J}(\alpha_{\gamma},\beta)=\emptyset$. In particular,  $\partial_{J}\alpha_{\gamma}=0$.
\end{lem}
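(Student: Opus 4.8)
The plan is to argue by contradiction. Suppose some $u\in\mathcal{M}^{J}(\alpha_{\gamma},\beta)$ exists. Since $H_{2}(L(2,1))=0$ the relative class is unique, so $I(u)=I(\alpha_{\gamma},\beta)=1$ and Proposition \ref{indexproperties}(3) applies: writing $u=u_{0}\cup u_{1}$, the nontrivial part $u_{1}$ satisfies $\mathrm{ind}(u_{1})=1$, is embedded, and is disjoint from $u_{0}$. For the input dynamics, recall from Lemma \ref{conleyindex}(2) and its proof that $\gamma$ is simple elliptic with monodromy angle $\theta\in(\tfrac12,1)$, that $\mu(\gamma)=1$ and $\mu(\gamma^{2})=3$, and that $[\gamma]\neq 0$ in $H_{1}(L(2,1))=\mathbb{Z}/2$. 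Note also that the existence of $u$ forces $[\beta]=[\alpha_{\gamma}]=2[\gamma]=0$.

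The first key step is to pin down the positive ends via the partition condition (Proposition \ref{partitioncondition}). For $\theta\in(\tfrac12,1)$ one checks directly that $S_{-\theta}\supseteq\{1,2\}$, hence $P_{\theta}^{\mathrm{out}}(2)=(2)$; thus the positive partition $P_{\gamma}^{+}$ of $u_{1}$ is an initial segment of $(2)$, i.e. either $\emptyset$ or $(2)$. If $P_{\gamma}^{+}=\emptyset$, then all the multiplicity $2$ at $\gamma$ lies in $u_{0}$ as a trivial cylinder, forcing $\gamma^{2}\subset\beta$ and leaving $u_{1}$ with only negative ends; positivity of $\int u_{1}^{*}d\lambda$ then forces $u_{1}=\emptyset$, whence $u=u_{0}$ and $I(u)=0$, a contradiction. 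Therefore $P_{\gamma}^{+}=(2)$, so $u_{0}$ contains no trivial cylinder over $\gamma$ (the only orbit appearing positively), giving $u=u_{1}$; moreover $u$ is connected, since any extra component would carry only negative ends, again violating positivity of area.

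It remains a bookkeeping computation for the connected curve $u$, with a single positive end at $\gamma^{2}$ and negative ends asymptotic to orbits $\gamma_{l}^{-}$ at $\beta$. Because $\tau_{0}$ is a global trivialization, the relative Chern term vanishes exactly as in Lemma \ref{index2}. Writing $g$ for the genus and $N^{-}$ for the number of negative ends, and using $-\chi(u)=2g-2+(1+N^{-})$ together with $\mu_{\tau_{0}}(\gamma^{2})=3$, the identity $\mathrm{ind}(u)=1$ rearranges to $\sum_{l}\mu(\gamma_{l}^{-})=3-2g-N^{-}$. Since $\mu\geq 1$ for every orbit by Lemma \ref{conleyindex}(1), we get $N^{-}\leq\sum_{l}\mu(\gamma_{l}^{-})=3-2g-N^{-}$, i.e. $2N^{-}+2g\leq 3$; the only integer solution compatible with the (non)emptiness of the sum is $g=0$, $N^{-}=1$, forcing $\mu(\gamma_{1}^{-})=2$. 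By Lemma \ref{conleyindex}(2) a single orbit of index $2$ must be simple, non-contractible and positive hyperbolic, so $\beta=(\gamma_{1}^{-},1)$ with $[\beta]=[\gamma_{1}^{-}]\neq 0$ in $H_{1}(L(2,1))$. This contradicts $[\beta]=0$. Hence $\mathcal{M}^{J}(\alpha_{\gamma},\beta)=\emptyset$ for every such $\beta$, and summing over $\beta$ yields $\partial_{J}\alpha_{\gamma}=0$. The main obstacle is the partition-condition step: everything downstream is linear index bookkeeping, but correctly computing $P_{\theta}^{\mathrm{out}}(2)=(2)$ and excluding the trivial-cylinder configuration is what makes the single positive end $\gamma^{2}$ available, which in turn pins the unique negative end to index $2$ and produces the homological contradiction.
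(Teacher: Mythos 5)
Your argument has a fatal sign error at the index bookkeeping step, and the approach does not recover even after the error is fixed. From $\mathrm{ind}(u)=-\chi(u)+2c_{1}+\mu(\gamma^{2})-\sum_{l}\mu(\gamma_{l}^{-})$ with $-\chi(u)=2g-2+(1+N^{-})$, $c_{1}=0$ and $\mu(\gamma^{2})=3$, the identity $\mathrm{ind}(u)=1$ rearranges to $\sum_{l}\mu(\gamma_{l}^{-})=2g+N^{-}+1$, not $3-2g-N^{-}$ (you effectively used $+\chi$ in place of $-\chi$). With the correct identity, the bound $\mu\geq 1$ on each negative end only gives $N^{-}\leq 2g+N^{-}+1$, which is vacuous, so the conclusion $g=0$, $N^{-}=1$ does not follow. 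Worse, there is a configuration that passes every constraint you impose: a genus-zero curve with one positive end at $\gamma^{2}$ and two negative ends, one at a non-contractible elliptic orbit $e$ with $\mu(e)=1$ and one at a non-contractible positive hyperbolic orbit $h$ with $\mu(h)=2$. It has $\mathrm{ind}=(0-2+3)+3-(1+2)=1$, $\beta=(e,1)\cup(h,1)$ is an ECH generator, and $[\beta]=[e]+[h]=0$ in $H_{1}(L(2,1))=\mathbb{Z}/2\mathbb{Z}$ since both classes are the nonzero element. So Fredholm-index accounting plus the homological constraint cannot prove the lemma; some further geometric input is required. (Your first half — the partition computation $P_{\theta}^{\mathrm{out}}(2)=(2)$ for $\theta\in(\tfrac12,1)$, the exclusion of trivial cylinders, and connectedness of $u$ — is correct and agrees with what the paper uses.)

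The paper closes the gap with an intersection-theoretic argument rather than index counting. Since $\gamma\in\mathcal{S}_{2}$, Theorem \ref{fundament} provides a disk-like global surface of section $v(\mathbb{D})$ bound by $\gamma^{2}$. For $R\gg 0$ one has the topological identity
\begin{equation*}
\#\bigl(\pi(u\cap\{R\}\times L(2,1))\cap v(\mathbb{D})\bigr)=\#\bigl(\pi(u\cap[-R,R]\times L(2,1))\cap\gamma\bigr)+\#\bigl(\pi(u\cap\{-R\}\times L(2,1))\cap v(\mathbb{D})\bigr),
\end{equation*}
and the three terms are controlled separately: the asymptotic winding bound at the positive end (winding at most $\lfloor\mu(\gamma^{2})/2\rfloor=1$) makes the left-hand side non-positive; positivity of intersections of $J$-holomorphic curves makes the first term on the right non-negative; and the defining property of a global surface of section (every orbit, hence every loop sufficiently close to an orbit, meets $v(\mathbb{D})$ positively) makes the last term strictly positive. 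This contradiction is what actually kills the curve, including the pair-of-pants configuration above that your argument cannot exclude.
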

\begin{proof}[\bf Proof of Lemma \ref{cocycle}]
     Suppose $\mathcal{M}^{J}(\alpha_{\gamma},\beta)\neq \emptyset$. Take $u\in \mathcal{M}^{J}(\alpha_{\gamma},\beta)$ and regard it as embedded $u\subset \mathbb{R}\times L(2,1)$ and consider $\pi(u\cap[-R,R]\times L(2,1)) \subset L(2,1)$ for large $R>>0$ where $\pi$ is the projection to $L(2,1)$.  Take  $v:\mathbb{D}\to L(2,1)$  a global surface of section  binding $\gamma^{2}$. Note that $H_{1}(L(2,1)\backslash \gamma)\cong \mathbb{Z}$. It follows easily from the topological observations that
     \begin{equation}\label{intersectioncounting}
        \#( \pi(u\cap\{R\}\times L(2,1))\cap v(\mathbb{D}))= \#( \pi(u\cap[-R,R]\times L(2,1)) \cap \gamma )+\# ( \pi(u\cap\{-R\}\times L(2,1))\cap v(\mathbb{D})).
     \end{equation}
      where $\#$ counts  intersection points algebraically.
Considering the partition condition Proposition \ref{partitioncondition}, it follows that the positive end asymptotic to $\gamma$ of $u$ is exactly one and the multiplicity is $2$. Therefore $u$ has no trivial cylinder.
    Consider in  a coordinate  $\phi_{\gamma}:S^{1} \times V_{\gamma}\to U_{\gamma}$ defined before Proposition \ref{asymptics}. Since it follows from the monotonicity of  winding numbers of the eigenfunctions with respect to eigenvalues, the winding number of the eigenfunction dominating the positive end asymptotic to $\gamma$ of $u$ as in Proposition \ref{asymptics} is at most $1$. Hence  $\#( \pi(u\cap\{R\}\times L(2,1))\cap v(\mathbb{D}))$ is not positive. 
    
    Next, consider $\# ( \pi(u\cap\{-R\}\times L(2,1))\cap v(\mathbb{D}))$. Assume that $\beta$ contains $\gamma$. By considering $(s,t)\mapsto (-s,t)$ in Proposition \ref{asymptics}, we can apply Proposition \ref{asymptics} to  negative ends asymptotic to $\gamma$ with some multiplicities. In addition, the number of the negative ends of $u$ asymptotic to $\gamma$ is $1$ and the multiplicity is $1$. Otherwise $\int u^{*}d\lambda=A(\alpha_{\gamma})-A(\beta)\leq A(\alpha_{\gamma})-2A(\gamma)\leq 0$. This is contradiction. Now it follows from Proposition \ref{asymptics} that  the winding number of the eigenfunction dominating the negative end asymptotic to $\gamma$ of $u$ as in Proposition \ref{asymptics} is at least $1$ and hence $\pi(u\cap\{-R\}\times L(2,1))$ rotates one while during one circle in the direction of the orbit. On the other hand, since $v(\mathbb{D})$ is a global surface of section, the intersection number of  any periodic orbit with $v(\mathbb{D})$ is positive.  This implies that $\# ( \pi(u\cap\{-R\}\times L(2,1))\cap v(\mathbb{D}))$ is always positive.

    Finally, consider $\#( \pi(u\cap[-R,R]\times L(2,1)) \cap \gamma )$. it follows from positivity of intersection numbers of $J$-holomorphic curves in 4-dimension that $\#( \pi(u\cap[-R,R]\times L(2,1)) \cap \gamma )$ is not negative.

    Summarizing, the left hand side of (\ref{intersectioncounting}) is not positive but the right hand side is positive. This is a contradiction. Therefore we have $\mathcal{M}^{J}(\alpha_{\gamma},\beta)=\emptyset$.
\end{proof}
\begin{lem}\label{Umapnonzero}
For $\gamma\in \mathcal{S}_{2}$ with $\mu(\gamma)=1$, let $\alpha_{\gamma}=(\gamma,2)$. Then  $\langle U_{J,z} \alpha_{\gamma},\emptyset \rangle\neq 0 $  for generic $z\in L(2,1)$.
\end{lem}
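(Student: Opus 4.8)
The plan is to compute the chain-level coefficient $\langle U_{J,z}\alpha_{\gamma},\emptyset\rangle$ directly as a count of $J$-holomorphic curves rather than passing through homology. By definition this coefficient equals $\#\{\,u\in\mathcal{M}_{2}^{J}(\alpha_{\gamma},\emptyset)/\mathbb{R}\mid (0,z)\in u\,\}$ reduced mod $2$, so it suffices to show that for generic $z$ exactly one such curve passes through $(0,z)$. First I would record that $\alpha_{\gamma}$ is a legitimate object for this count: by Lemma \ref{conleyindex}(2) the orbit $\gamma$ is elliptic, so $(\gamma,2)$ is an ECH generator, and by Lemma \ref{index2} together with Lemma \ref{cocycle} it satisfies $I(\alpha_{\gamma},\emptyset)=2$ and $\partial_{J}\alpha_{\gamma}=0$.

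Next I would analyze the moduli space $\mathcal{M}_{2}^{J}(\alpha_{\gamma},\emptyset)/\mathbb{R}$. It is compact: any SFT-limit leaving it would have a lowest floor lying in some $\mathcal{M}^{J}(\beta,\emptyset)$ with $I(\beta,\emptyset)=1$ by additivity of the ECH index, contradicting Lemma \ref{emptyindex1}. I emphasize that this compactness argument uses only $I(\alpha_{\gamma},\emptyset)=2$ and Lemma \ref{emptyindex1}, not the conclusion we are trying to prove, so it is not circular. Exactly as in the index computation of Lemma \ref{Umap}, every curve $u$ in this space (being counted by the $U$-map) is embedded, of genus $0$, and $\pi(u)$ is a disk-like global surface of section with boundary $\gamma^{2}$. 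Applying \cite[Proposition 3.2, Proposition 3.3]{CHP} to this compact family shows that the curves assemble into a rational open book decomposition of $L(2,1)$ with binding $\gamma$, whose pages are precisely the $\pi(u)$. Since the interiors of the pages foliate $L(2,1)\setminus\gamma$, a generic base point $z$ lies on exactly one page $\pi(u)$, and because $\pi(u)$ is embedded in its interior there is a unique $\mathbb{R}$-translate of $u$ meeting $(0,z)$. Hence the count is exactly $1$, which gives $\langle U_{J,z}\alpha_{\gamma},\emptyset\rangle\neq 0$.

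The hard part will be to guarantee that $\mathcal{M}_{2}^{J}(\alpha_{\gamma},\emptyset)$ is \emph{nonempty} for the generic $J$ used to define ECH; the compactness and foliation arguments above only describe the family once it is known to be nonempty. I expect to obtain nonemptiness by invoking Theorem \ref{fundament}, which produces a rational open book with binding $\gamma$ and disk-like pages directly from $\gamma\in\mathcal{S}_{2}$, together with the $J$-independence of the $U$-map: computing the coefficient with an almost complex structure adapted to the associated finite-energy foliation realizes the pages as the desired $J$-holomorphic curves. An alternative, purely homological route is to note that, since $U:\mathrm{ECH}_{2}(L(2,1),\lambda,0)\to\mathrm{ECH}_{0}(L(2,1),\lambda,0)$ is an isomorphism and the $\emptyset$-coefficient of $\partial_{J}$ on index-$1$ generators vanishes by Lemma \ref{emptyindex1} (as in the proof of Lemma \ref{existenceofhol}), the statement $\langle U_{J,z}\alpha_{\gamma},\emptyset\rangle\neq 0$ is equivalent to $[\alpha_{\gamma}]\neq 0$ in $\mathrm{ECH}_{2}$; one would then identify $[\alpha_{\gamma}]$ with the generator, which again reduces to the same existence input.
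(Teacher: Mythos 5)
Your proposal is correct and follows essentially the same route as the paper: the paper's proof likewise obtains nonemptiness by recalling that the pages of the rational open book from \cite[Theorem 1.7, Corollary 1.8]{HrS} (Theorem \ref{fundament}) are projections of $J$-holomorphic planes, notes compactness of $\mathcal{M}^{J}(\alpha_{\gamma},\emptyset)/\mathbb{R}$ and disjointness of distinct curves, and concludes that exactly one curve passes through $(0,z)$ for $z$ off $\gamma$. You correctly isolate nonemptiness as the crux and resolve it with the same input, so the two arguments coincide in substance.
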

\begin{proof}[\bf Proof of Lemma \ref{Umapnonzero}]
Recall that each page of the rational open book decomposition constructed in \cite[Theorem 1.7, Corollary 1.8]{HrS} is the projection of $J$-holomorphic curve from $(\mathbb{C},i)$ to $L(2,1)$. Moreover in this case,  $:\mathcal{M}^{J}(\alpha_{\gamma},\emptyset)/\mathbb{R}$ is compact and  any two distinct elements $u_{1},u_{2}\in \mathcal{M}^{J}(\alpha_{\gamma},\emptyset)$ has no intersection point. 
Hence $:\mathcal{M}^{J}(\alpha_{\gamma},\emptyset)/\mathbb{R}\cong S^{1}$ and
     for a section $s:\mathcal{M}^{J}(\alpha_{\gamma},\emptyset)/\mathbb{R}\to \mathcal{M}^{J}(\alpha_{\gamma},\emptyset)$, $\bigcup_{\tau\in \mathcal{M}^{J}(\alpha_{\gamma},\emptyset)/\mathbb{R}}\overline{\pi(s(\tau))} \to \mathcal{M}^{J}(\alpha_{\gamma},\emptyset)/\mathbb{R}$ is an (rational) open book decomposition of $L(2,1)$. This implies that for $z\in L(2,1)$ not on $\gamma$, there is exactly one $J$-holomorphic curve in $ \mathcal{M}^{J}(\alpha_{\gamma},\emptyset)$ through $(0,z)\in \mathbb{R}\times L(2,1)$. Therefore we have $\langle U_{J,z} \alpha_{\gamma},\emptyset \rangle\neq 0 $.
\end{proof}

Define a set $\mathcal{G}$ consisting of ECH generators as
\begin{equation}
    \mathcal{G}:=\{\alpha=(\gamma_{1},1)\cup (\gamma_{2},1)|\gamma_{1},\gamma_{2}\in \mathcal{S}_{2},I(\alpha,\emptyset)=2,\mathcal{M}^{J}(\alpha,\emptyset)\neq 0\}\cup \{(\gamma,2)|\gamma \in \mathcal{S}_{2},\mu(\gamma)=1\}.
\end{equation}
Note that $\langle U_{J,z} \alpha,\emptyset \rangle\neq 0 $  if and only if  $\alpha \in \mathcal{G}$.
\begin{lem}\label{index1bound}
Suppose that $\beta$ is an ECH generator with $I(\beta,\alpha)=1$.
Then 
\begin{equation}
    \sum_{\alpha\in \mathcal{G}}\langle \partial_{J}\beta,\alpha \rangle=0
\end{equation}
\end{lem}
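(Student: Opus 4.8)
The plan is to deduce the identity from the chain-level commutation $U_{J,z}\partial_J = \partial_J U_{J,z}$ recorded in the construction of the $U$-map, combined with the vanishing supplied by Lemma \ref{emptyindex1}. First I would record that every $\alpha \in \mathcal{G}$ satisfies $[\alpha]=0$ and $I(\alpha,\emptyset)=2$, and that (working over $\mathbb{F}=\mathbb{Z}/2$) the coefficient $\langle U_{J,z}\alpha,\emptyset\rangle$ equals $1$ precisely when $\alpha\in\mathcal{G}$ and $0$ otherwise, exactly the characterization noted just before the statement. Since $\partial_J$ preserves the homology class, if $[\beta]\neq 0$ then $\langle\partial_J\beta,\alpha\rangle=0$ for every $\alpha\in\mathcal{G}$ and the sum is trivially $0$; hence I may assume $[\beta]=0$. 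The hypothesis $I(\beta,\alpha)=1$ against index-$2$ targets then forces $I(\beta,\emptyset)=3$, which fixes all the gradings below.

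Next I would rewrite the sum as a single $U$-map coefficient. Expanding $\partial_J\beta$ in the basis of index-$2$ generators and applying $U_{J,z}$ gives
\begin{equation*}
\langle U_{J,z}\partial_J\beta,\emptyset\rangle=\sum_{\gamma:I(\gamma,\emptyset)=2}\langle\partial_J\beta,\gamma\rangle\,\langle U_{J,z}\gamma,\emptyset\rangle=\sum_{\alpha\in\mathcal{G}}\langle\partial_J\beta,\alpha\rangle,
\end{equation*}
where the second equality uses that $\langle U_{J,z}\gamma,\emptyset\rangle=1$ iff $\gamma\in\mathcal{G}$. The chain-level commutation relation then yields $\langle U_{J,z}\partial_J\beta,\emptyset\rangle=\langle\partial_J U_{J,z}\beta,\emptyset\rangle$, so it suffices to show that the right-hand side vanishes.

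Finally I would kill the right-hand side with Lemma \ref{emptyindex1}. Since $U_{J,z}$ lowers the ECH index by $2$, the chain $U_{J,z}\beta$ is a sum of generators $\delta$ with $I(\delta,\emptyset)=1$ and $[\delta]=[\beta]=0$, and expanding gives
\begin{equation*}
\langle\partial_J U_{J,z}\beta,\emptyset\rangle=\sum_{\delta:I(\delta,\emptyset)=1}\langle U_{J,z}\beta,\delta\rangle\,\langle\partial_J\delta,\emptyset\rangle.
\end{equation*}
Each coefficient $\langle\partial_J\delta,\emptyset\rangle$ counts $\mathcal{M}_{1}^{J}(\delta,\emptyset)/\mathbb{R}$, which is empty because $I(\delta,\emptyset)=1$ and $[\delta]=0$ put us exactly in the situation of Lemma \ref{emptyindex1}. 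Hence the right-hand side is $0$, and chaining the two displays produces $\sum_{\alpha\in\mathcal{G}}\langle\partial_J\beta,\alpha\rangle=0$. I expect no serious obstacle here: the only geometric input is Lemma \ref{emptyindex1}, and everything else is formal bookkeeping with the $\mathbb{Z}$-grading, the homology-class constraints, and the identity $U_{J,z}\partial_J=\partial_J U_{J,z}$; the one point to be careful about is confirming that the gradings and homology classes line up so that Lemma \ref{emptyindex1} genuinely applies to each $\delta$ occurring in $U_{J,z}\beta$.
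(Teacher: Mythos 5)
Your proposal is correct and follows essentially the same route as the paper: rewrite $\sum_{\alpha\in\mathcal{G}}\langle\partial_J\beta,\alpha\rangle$ as $\langle U_{J,z}\partial_J\beta,\emptyset\rangle$ using that $\langle U_{J,z}\gamma,\emptyset\rangle=1$ exactly for $\gamma\in\mathcal{G}$, then apply the chain-level identity $U_{J,z}\partial_J=\partial_J U_{J,z}$ and kill $\langle\partial_J U_{J,z}\beta,\emptyset\rangle$ via Lemma \ref{emptyindex1}. Your write-up is in fact slightly more careful than the paper's at the final step, where you explicitly expand $U_{J,z}\beta$ over index-$1$ generators before invoking the vanishing of $\mathcal{M}^J(\delta,\emptyset)$.
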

\begin{proof}[\bf Proof of Lemma \ref{index1bound}]
Write
\begin{equation}
    \partial_{J}\beta=\sum_{\alpha\in \mathcal{G}}\langle \partial_{J}\beta,\alpha \rangle \alpha+\sum_{I(\beta,\sigma)=1,\sigma\notin \mathcal{G}}\langle \partial_{J}\beta,\sigma \rangle \sigma.
\end{equation}
Then we have
\begin{equation}
   \langle U_{J,z}\partial_{J}\beta,\emptyset \rangle=\sum_{\alpha\in \mathcal{G}}\langle \partial_{J}\beta,\alpha \rangle \langle U_{J,z} \alpha,\emptyset \rangle +\sum_{I(\beta,\sigma)=1,\sigma\notin \mathcal{G}}\langle \partial_{J}\beta,\sigma \rangle \langle \sigma,\emptyset \rangle=\sum_{\alpha\in \mathcal{G}}\langle \partial_{J}\beta,\alpha \rangle
\end{equation}
Here we use that for $\alpha\in \mathcal{G}$, $\langle U_{J,z} \alpha,\emptyset \rangle=1$ and for $\sigma$ with $\sigma \notin \mathcal{G}$, $\langle U_{J,z} \sigma,\emptyset \rangle=0$.

Since $U_{J,z}\partial_{J}=\partial_{J}U_{J,z}$, we have $\langle U_{J,z}\partial_{J}\beta,\emptyset \rangle=\langle \partial_{J}U_{J,z}\beta,\emptyset \rangle=0$ (Here we use  Lemma \ref{emptyindex1}). This completes the proof.
\end{proof}

\begin{lem}\label{nonzero}
     For any $\gamma \in \mathcal{S}_{2}$ with $\mu(\gamma)=1$,  $0\neq \langle \alpha_{\gamma} \rangle = \langle(\gamma,2) \rangle \in ECH_{2}(Y,\lambda,0)$.
\end{lem}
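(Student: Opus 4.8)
The plan is to reduce the claim to a computation in $ECH_0(L(2,1),\lambda,0)$ via the $U$-map. By Proposition \ref{lenssp} the map $U:ECH_2(L(2,1),\lambda,0)\to ECH_0(L(2,1),\lambda,0)$ is an isomorphism, and $ECH_0\cong\mathbb{F}$ is generated by $\langle\emptyset\rangle$. Hence it suffices to show that $U\langle\alpha_\gamma\rangle\neq 0$, where $\alpha_\gamma=(\gamma,2)$. This matches the overall philosophy of the paper, in which the $U$-map does the detecting.

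First I would check that $\langle\alpha_\gamma\rangle$ and then $U\langle\alpha_\gamma\rangle$ are honestly defined. By Lemma \ref{index2}, $\alpha_\gamma$ lies in $ECC_2(L(2,1),\lambda,0)$, and by Lemma \ref{cocycle} we have $\partial_J\alpha_\gamma=0$, so $\langle\alpha_\gamma\rangle\in ECH_2(L(2,1),\lambda,0)$ is well-defined. Since $U_{J,z}$ commutes with $\partial_J$ on the chain level, the chain $U_{J,z}\alpha_\gamma$ satisfies $\partial_J(U_{J,z}\alpha_\gamma)=U_{J,z}\partial_J\alpha_\gamma=0$, so it is a cycle in $ECC_0$ representing $U\langle\alpha_\gamma\rangle$.

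Next I would pin down the $\emptyset$-component of this cycle. By Lemma \ref{Umapnonzero} we have $\langle U_{J,z}\alpha_\gamma,\emptyset\rangle\neq 0$, so over $\mathbb{F}=\mathbb{Z}/2$ one may write $U_{J,z}\alpha_\gamma=\emptyset+\sum_j\beta_j$ with each $\beta_j$ an ECH generator satisfying $[\beta_j]=0$, $I(\beta_j,\emptyset)=0$ and $\beta_j\neq\emptyset$. The decisive observation, which is exactly the mechanism already used in the proof of Lemma \ref{existenceofhol}, is that the $\emptyset$-term cannot be cancelled by a boundary: Lemma \ref{emptyindex1} gives $\mathcal{M}^J(\sigma,\emptyset)=\emptyset$ for every ECH generator $\sigma$ with $I(\sigma,\emptyset)=1$, so the coefficient of $\emptyset$ in $\partial_J\eta$ vanishes for all $\eta\in ECC_1(L(2,1),\lambda,0)$. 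Thus every element of $\mathrm{Im}(\partial_J|_{ECC_1})$ has zero $\emptyset$-coefficient, whereas $U_{J,z}\alpha_\gamma$ has $\emptyset$-coefficient $1$. Therefore $U_{J,z}\alpha_\gamma\notin\mathrm{Im}(\partial_J)$, and since it is a cycle we conclude $U\langle\alpha_\gamma\rangle=\langle U_{J,z}\alpha_\gamma\rangle\neq 0$ in $ECH_0$; invoking that $U$ is an isomorphism then yields $\langle\alpha_\gamma\rangle\neq 0$.

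I expect this lemma to be the easiest step in the chain, since all the geometric work is packaged into Lemmas \ref{cocycle}, \ref{Umapnonzero} and \ref{emptyindex1}. The only point requiring care is the bookkeeping in the third paragraph, namely that $U_{J,z}\alpha_\gamma$ is genuinely a cycle and that its $\emptyset$-coefficient survives to homology; both rest on the chain-level identity $\partial_J U_{J,z}=U_{J,z}\partial_J$ together with the non-cancellation forced by Lemma \ref{emptyindex1}. No new compactness or index input beyond the earlier results should be needed.
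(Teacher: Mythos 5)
Your proof is correct, and all the inputs you cite (Lemmas \ref{index2}, \ref{cocycle}, \ref{Umapnonzero}, \ref{emptyindex1} and the chain-level identity $U_{J,z}\partial_J=\partial_J U_{J,z}$) are exactly the ones available at this point in the paper. The route is, however, packaged differently from the paper's. The paper argues by contradiction one degree up: assuming $\alpha_\gamma=\partial_J(\beta_1+\dots+\beta_j)$ with $I(\beta_i,\alpha_\gamma)=1$, it applies the functional $\sum_{\alpha\in\mathcal{G}}\langle\,\cdot\,,\alpha\rangle$ (which equals $\langle U_{J,z}(\cdot),\emptyset\rangle$, since $\langle U_{J,z}\alpha,\emptyset\rangle=1$ exactly for $\alpha\in\mathcal{G}$) and uses Lemma \ref{index1bound} to see that this functional kills $\partial_J(\mathrm{ECC}_3)$ while taking the value $1$ on $\alpha_\gamma\in\mathcal{G}$. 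You instead push the class down to $\mathrm{ECH}_0$ and detect it there: $U_{J,z}\alpha_\gamma$ is a cycle with $\emptyset$-coefficient $1$, and Lemma \ref{emptyindex1} shows boundaries out of $\mathrm{ECC}_1$ have $\emptyset$-coefficient $0$, so $U\langle\alpha_\gamma\rangle\neq 0$. These are the same mechanism transposed by one degree (your step ``boundaries in $\mathrm{ECC}_0$ have zero $\emptyset$-coefficient'' is the degree-$0$ analogue of Lemma \ref{index1bound}), so neither buys anything the other lacks; yours is arguably slightly cleaner in that it avoids introducing the set $\mathcal{G}$. One small remark: you do not need the full strength of Proposition \ref{lenssp} that $U:\mathrm{ECH}_2\to\mathrm{ECH}_0$ is an isomorphism --- since $U$ is a homomorphism, $U\langle\alpha_\gamma\rangle\neq 0$ already forces $\langle\alpha_\gamma\rangle\neq 0$; all you need is that $U$ descends to homology.
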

\begin{proof}[\bf Proof of Lemma \ref{nonzero}]
Suppose that $0=\langle \alpha_{\gamma} \rangle  \in ECH_{2}(Y,\lambda,0)$.
Then there are ECH generators $\beta_{1},...\beta_{j}$ with $I(\beta_{i},\alpha_{\gamma})=1$ for any $i$ such that $\partial_{J}(\beta_{1}+...+\beta_{j})=\alpha_{\gamma}$. From Lemma \ref{index1bound}, we have 
\begin{equation}
    \sum_{1\leq i \leq j} \sum_{\alpha\in \mathcal{G}}\langle \partial_{J}\beta_{i},\alpha \rangle=\sum_{\alpha\in \mathcal{G}}\langle \alpha_{\gamma},\alpha \rangle=0.
\end{equation}
But since $\alpha_{\gamma}\in \mathcal{G}$,  $\sum_{\alpha\in \mathcal{G}}\langle \alpha_{\gamma},\alpha \rangle =1$. This is a contradiction. We complete the proof.
\end{proof}

\begin{proof}
Now, we prove Proposition \ref{prp2}. From Lemma \ref{nonzero} and  the definition of $c_{1}^{\mathrm{ECH}}(L(2,1),\lambda)$, we have $2\int_{\gamma} \lambda \geq c_{1}^{\mathrm{ECH}}(L(2,1),\lambda)$ for any $\gamma \in \mathcal{S}_{2}$ with $\mu(\gamma)=1$. This implies $\inf_{\gamma\in \mathcal{S}_{2},\mu(\gamma)=1}\int_{\gamma}\lambda\geq \frac{1}{2}c_{1}^{\mathrm{ECH}}(L(2,1),\lambda)$. This completes the proof.
\end{proof}

\section{Extend the results to degenerate cases}
\subsection{Case of $p=2$}
In this subsection, we prove Theorem \ref{maintheorem} (A) under degenerate strictly convex as a limiting case of non-degenerate result. At first, we show;
\begin{prp}\label{degeneratel2}
     Assume that $(L(2,1),\lambda)$ is strictly convex. Then there exists a simple orbit $\gamma\in \mathcal{S}_{2}$ such that $\mu(\gamma)=1$ and $\int_{\gamma}\lambda = \frac{1}{2}\,c_{1}^{\mathrm{ECH}}(L(2,1),\lambda)$. In particular,
    \begin{equation}
   \inf_{\gamma\in \mathcal{S}_{2},\mu(\gamma)=1}\int_{\gamma}\lambda \leq \frac{1}{2}\,c_{1}^{\mathrm{ECH}}(L(2,1),\lambda).
\end{equation}
\end{prp}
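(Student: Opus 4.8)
The plan is to realize the degenerate strictly convex form as a $C^{\infty}$-limit of non-degenerate approximations and push the non-degenerate conclusion to the limit. Since $\lambda$ is strictly convex, Remark \ref{mainrem}(1) lets me choose non-degenerate dynamically convex contact forms $\lambda_{n}=f_{n}\lambda$ with $\mathrm{Ker}\lambda_{n}=\xi_{\mathrm{std}}$ and $f_{n}\to 1$ in $C^{\infty}$. For each $n$ the non-degenerate case of Theorem \ref{maintheorem}(A), namely Propositions \ref{prp1} and \ref{prp2} combined, produces a simple orbit $\gamma_{n}\in\mathcal{S}_{2}(L(2,1),\lambda_{n})$ with $\mu(\gamma_{n})=1$ and $\int_{\gamma_{n}}\lambda_{n}=\tfrac{1}{2}c_{1}^{\mathrm{ECH}}(L(2,1),\lambda_{n})$. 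By Lemma \ref{conleyindex}(2) each $\gamma_{n}$ is elliptic with monodromy angle $\theta_{n}\in(\tfrac12,1)$, so $\mu(\gamma_{n}^{2})=3$.

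Next I would extract a limit. Because the contact action of a Reeb orbit equals its period, the periods $T_{\gamma_{n}}=\int_{\gamma_{n}}\lambda_{n}=\tfrac12 c_{1}^{\mathrm{ECH}}(\lambda_{n})$ are uniformly bounded, using continuity of the ECH spectrum (Proposition \ref{properties1}(4)) to conclude $c_{1}^{\mathrm{ECH}}(\lambda_{n})\to c_{1}^{\mathrm{ECH}}(\lambda)$. Since $X_{\lambda_{n}}\to X_{\lambda}$ in $C^{\infty}$ with bounded periods, reparametrizing on a fixed circle and applying Arzelà--Ascoli gives, after passing to a subsequence, $\gamma_{n}\to\gamma$ in $C^{\infty}$, where $\gamma$ is a periodic orbit of $X_{\lambda}$ with $\int_{\gamma}\lambda=\lim T_{\gamma_{n}}=\tfrac12 c_{1}^{\mathrm{ECH}}(\lambda)$. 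A priori $\gamma=\gamma_{0}^{k}$ for a simple orbit $\gamma_{0}$ and some $k\geq 1$, so the content of the proposition reduces to showing $k=1$, that $\gamma\in\mathcal{S}_{2}$, and that $\mu(\gamma)=1$.

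For simplicity I would argue via mean indices. Lower semicontinuity of $\mu_{CZ}$ gives $\mu(\gamma)\leq\liminf\mu(\gamma_{n})=1$, hence for the mean index $\widehat{\mu}$ one has $\widehat{\mu}(\gamma)\leq\mu(\gamma)+1\leq 2$. Since $[\gamma_{n}]$ is the nontrivial class of $H_{1}(L(2,1))=\mathbb{Z}/2$, also $[\gamma_{0}]\neq 0$ and $k$ is odd; moreover $\gamma_{0}^{2}$ is contractible, so dynamical convexity of the (still dynamically convex) form $\lambda$ gives $\mu(\gamma_{0}^{2})\geq 3$ and therefore $\widehat{\mu}(\gamma_{0})\geq 1$. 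From $\widehat{\mu}(\gamma)=k\,\widehat{\mu}(\gamma_{0})\leq 2$ I then get $k\leq 2$, so $k=1$ and $\gamma$ is embedded. Knowing $\gamma$ is embedded, for large $n$ the orbit $\gamma_{n}$ is $C^{1}$-close to $\gamma$ and hence transversely isotopic to it in the transverse knots to $\xi_{\mathrm{std}}$; as $2$-unknottedness and the rational self-linking number are transverse-isotopy invariants, $\gamma\in\mathcal{S}_{2}$ with $\mathrm{sl}(\gamma)=-\tfrac12$. Finally $\mu(\gamma^{2})=3$ (lower semicontinuity gives $\leq 3$, dynamical convexity gives $\geq 3$), and combined with $\mu(\gamma)\leq 1$ this pins down $\mu(\gamma)=1$. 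The desired inequality $\inf_{\gamma\in\mathcal{S}_{2},\mu(\gamma)=1}\int_{\gamma}\lambda\leq\tfrac12 c_{1}^{\mathrm{ECH}}(L(2,1),\lambda)$ is then immediate.

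The main obstacle is the control of the limit orbit, and it has two faces. First, excluding a multiply covered limit is precisely the difficulty flagged in Remark \ref{mainrem}(2); here it is defeated by pairing the $\mathbb{Z}/2$ homological constraint (forcing $k$ odd) with the mean-index bound $\widehat{\mu}(\gamma_{0})\geq 1$ from dynamical convexity, an argument that works only because $p=2$ keeps $k\leq 2$. Second, and most technically, one must rule out $\mu(\gamma)=0$ in the boundary case $\theta_{n}\to 1$, where the monodromy of the limit degenerates to rotation number one; excluding this requires the finer Hofer--Wysocki--Zehnder index analysis of degenerate orbits, and I expect this to be the delicate step rather than the soft compactness and continuity inputs.
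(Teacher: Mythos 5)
Your overall strategy coincides with the paper's proof of Proposition \ref{degeneratel2}: approximate $\lambda$ by non-degenerate strictly convex forms $\lambda_{n}\to\lambda$, apply the non-degenerate statement (Propositions \ref{prp1} and \ref{prp2}) to get $\gamma_{n}\in\mathcal{S}_{2}(L(2,1),\lambda_{n})$ with $\mu(\gamma_{n})=1$ and $\int_{\gamma_{n}}\lambda_{n}=\frac{1}{2}c_{1}^{\mathrm{ECH}}(\lambda_{n})$, use continuity of $c_{1}^{\mathrm{ECH}}$ and Arzel\`a--Ascoli to extract a limit orbit $\gamma=\gamma_{0}^{k}$, and then show $k=1$, $\gamma\in\mathcal{S}_{2}$, $\mu(\gamma)=1$. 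The one place you genuinely deviate is the exclusion of a multiply covered limit: you combine the $\mathbb{Z}/2$ homological constraint (forcing $k$ odd) with the mean-index facts $\widehat{\mu}-1\leq\mu\leq\widehat{\mu}+1$ and $\widehat{\mu}(\gamma_{0}^{k})=k\,\widehat{\mu}(\gamma_{0})$ to force $k\leq 2$. This is correct, but it imports standard properties of the mean index that the paper never introduces; the paper's Claim \ref{limittingorbit} argues instead with Proposition \ref{conleycovering}(2): lower semicontinuity gives $\mu(\gamma_{0}^{2k})\leq 3$, while contractibility of $\gamma_{0}^{2}$ and dynamical convexity give $\mu(\gamma_{0}^{2})\geq 3$ and hence $\mu((\gamma_{0}^{2})^{k})\geq 2k+1\geq 5$ for $k\geq 2$. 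The two arguments encode the same index growth; the paper's version is self-contained. The identification of $\gamma\in\mathcal{S}_{2}$ via transverse isotopy for large $n$ is identical to the paper's.

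The one real defect is your final step. You assert that $\mu(\gamma^{2})=3$ together with $\mu(\gamma)\leq 1$ ``pins down'' $\mu(\gamma)=1$, and then concede in your closing paragraph that ruling out $\mu(\gamma)=0$ is a delicate unfinished point requiring finer degenerate index analysis. It is not delicate: Proposition \ref{conleycovering}(1), proved in Section 2 for possibly degenerate symplectic paths, says that an even index $\mu(\gamma)=2n$ forces $\mu(\gamma^{2})=4n$, which for $n=0$ contradicts $\mu(\gamma^{2})\geq 3$. (One should also observe that $\mu(\gamma)<0$ is impossible; this follows from the same winding-number description of $\mu_{CZ}$, since $\mu(\gamma)\leq 0$ forces $w(S_{\varphi},\eta^{\geq})\leq 0$ and hence $\mu(\gamma^{2})\leq 0$.) So the step you flag as the hard one is exactly the step the paper closes with a short appeal to Proposition \ref{conleycovering}; you should invoke it rather than leave the point open.
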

\begin{proof}[\bf Proof of Proposition \ref{degeneratel2}]
    Let $L=c^{\mathrm{ECH}}_{1}(L(2,1),\lambda)$. Take a sequence of strictly convex contact forms $\lambda_{n}$ such that $\lambda_{n} \to \lambda$ in $C^{\infty}$-topology and $\lambda_{n}$ is non-degenerate for each $n$. 
    Therefore we have
    \begin{equation}
        \inf_{\gamma\in \mathcal{S}_{2},\mu(\gamma)=1}\int_{\gamma}\lambda_{n} = \frac{1}{2}\,c_{1}^{\mathrm{ECH}}(L(2,1),\lambda_{n})
    \end{equation}
    Note that  $c_{1}^{\mathrm{ECH}}(L(2,1),\lambda_{n})\to L$ as $n\to+\infty$. This means that there is a sequence of $\gamma_{n}\in \mathcal{S}_{2}(L(2,1),f_{n}\lambda)$ with $\mu(\gamma_{n})=1$ such that $\int_{\gamma_{n}}\lambda_{n}\to \frac{1}{2}L$. By Arzelà–Ascoli theorem, we can find a subsequence which converges to a periodic orbit $\gamma$ of $\lambda$  in $C^{\infty}$-topology. 
    \begin{cla}\label{limittingorbit}
       $\gamma$ is simple. In particular, $\gamma\in \mathcal{S}_{2}(L(2,1),\lambda)$ and $\mu(\gamma)=1$.
    \end{cla}
    \begin{proof}[\bf Proof of Claim \ref{limittingorbit}]
        By the argument so far, there is a sequence of $\gamma_{n}\in \mathcal{S}_{2}(L(2,1),\lambda_{n})$ with $\mu(\gamma_{n})=1$ which converges to $\gamma$ in $C^{\infty}$. Note that $\mu(\gamma
        ^{2})=3$. Suppose that $\gamma$ is not simple, that is, there is a simple orbit $\gamma'$ and $k\in \mathbb{Z}_{>0}$ with $\gamma'^{k}=\gamma$. Form the lower semi-continuity of $\mu$, we have $\mu(\gamma_{n}^{2})\to\mu(\gamma'^{2k})=\mu((\gamma'^{2})^{k})=3$. Note that here we use the fact that $\gamma'^{2k}$ is contractible and $\mu((\gamma'^{2})^{k})\geq3$. But since $\gamma'^{2}$ is also contractible, we have $\mu(\gamma'^{2})\geq3$. Hence it follows from Proposition \ref{conleycovering} that $\mu((\gamma'^{2})^{k})\geq2k+1\geq5$. This is a contradiction. Therefore $\gamma$ is simple. This implies that for sufficiently large $n$, $\gamma_{n}$ is transversally isotopic to $\gamma$. This implies that $\gamma$ is 2-unknotted and  has self-linking number $-\frac{1}{2}$.
        
        At last, we prove $\mu(\gamma)=1$. Form the lower semi-continuity of $\mu$, we have $\mu(\gamma_{n})\to\mu(\gamma)=1$ or $0$. Suppose $\mu(\gamma)=0$. Then from Proposition \ref{conleycovering}, we have $\mu(\gamma^{2})=0$. This contradicts the assumption of dynamical convexity. Hence we have $\mu(\gamma)=1$. We complete the proof.
    \end{proof}
    As discussion so far, there is a sequence of $\gamma_{n}\in \mathcal{S}_{2}(L(2,1),f_{n}\lambda)$ with $\mu(\gamma_{n})=1$ and $\gamma\in \mathcal{S}_{2}(L(2,1),\lambda)$ with $\mu(\gamma)=1$ such that $\int_{\gamma_{n}}f_{n}\lambda\to \frac{1}{2}L$ and $\gamma_{n}$ converges to  $\gamma$ of $\lambda$  in $C^{\infty}$-topology. Therefore we have $\int_{\gamma}\lambda = \frac{1}{2}\,c_{1}^{\mathrm{ECH}}(L(2,1),\lambda)$ in $C^{\infty}$-topology. we complete the proof of Proposition \ref{degeneratel2}.
\end{proof}

Now, we have Proposition \ref{degeneratel2}. Therefore in order to complete the proof of Theorem \ref{maintheorem} (A), it is sufficient to show the next proposition.
\begin{prp}\label{upperbound}
     Assume that $(L(2,1),\lambda)$ is strictly convex. Then
    \begin{equation}
   \frac{1}{2}\,c_{1}^{\mathrm{ECH}}(L(2,1),\lambda) \leq \inf_{\gamma\in \mathcal{S}_{2},\mu(\gamma)=1}\int_{\gamma}\lambda .
\end{equation}
\end{prp}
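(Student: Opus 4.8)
The plan is to deduce this lower bound from the non-degenerate case (Proposition \ref{prp2}) by an approximation argument, the point being to approximate the degenerate strictly convex $\lambda$ by non-degenerate dynamically convex forms for which a \emph{prescribed} orbit $\gamma$ survives. It suffices to fix an arbitrary $\gamma\in\mathcal{S}_{2}(L(2,1),\lambda)$ with $\mu(\gamma)=1$ and prove $2\int_{\gamma}\lambda\geq c_{1}^{\mathrm{ECH}}(L(2,1),\lambda)=:L$; the claimed inequality then follows by taking the infimum over such $\gamma$.

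First I would construct a family $\lambda_{\epsilon}=f_{\epsilon}\lambda$ with $f_{\epsilon}\to 1$ in $C^{\infty}$ as $\epsilon\to 0$, each $\lambda_{\epsilon}$ non-degenerate and dynamically convex, such that $\gamma$ remains a Reeb orbit of $\lambda_{\epsilon}$ lying in $\mathcal{S}_{2}(L(2,1),\lambda_{\epsilon})$ with $\mu_{\lambda_{\epsilon}}(\gamma)=1$ and $\int_{\gamma}\lambda_{\epsilon}=\int_{\gamma}\lambda$. The mechanism is to take $f_{\epsilon}\equiv 1$ and $df_{\epsilon}\equiv 0$ along $\gamma$: then $f_{\epsilon}\lambda=\lambda$ and $d(f_{\epsilon}\lambda)=d\lambda$ on $\gamma$, so $X_{f_{\epsilon}\lambda}=X_{\lambda}$ along $\gamma$ and $\gamma$ persists with unchanged period and action. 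Since $\ker(f_{\epsilon}\lambda)=\ker\lambda=\xi_{\mathrm{std}}$, the transverse knot $\gamma$ is unchanged and its rational self-linking number and $2$-unknottedness are unchanged, so automatically $\gamma\in\mathcal{S}_{2}(L(2,1),\lambda_{\epsilon})$. The transverse $2$-jet of $f_{\epsilon}$ along $\gamma$ is free to prescribe, and I would use it to make $\gamma$ and its iterates non-degenerate; a further generic perturbation supported away from $\gamma$ makes $\lambda_{\epsilon}$ non-degenerate everywhere. That these perturbations can simultaneously be kept dynamically convex is exactly where strict convexity of $\lambda$ is used (cf. Remark \ref{mainrem}(1)). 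Finally, to see $\mu_{\lambda_{\epsilon}}(\gamma)=1$, I would combine the lower semicontinuity of $\mu_{CZ}$ (which gives $\mu_{\lambda_{\epsilon}}(\gamma)\geq\mu_{\lambda}(\gamma)=1$ for small $\epsilon$) with dynamical convexity and Proposition \ref{conleycovering}, choosing the transverse $2$-jet so that the monodromy angle of $\gamma$ stays in $(0,1)$, exactly as in Claim \ref{limittingorbit}.

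Granting the construction, the conclusion is immediate: applying Proposition \ref{prp2} to the non-degenerate dynamically convex form $\lambda_{\epsilon}$ gives
\begin{equation*}
\tfrac{1}{2}\,c_{1}^{\mathrm{ECH}}(L(2,1),\lambda_{\epsilon})\leq\inf_{\gamma'\in\mathcal{S}_{2}(L(2,1),\lambda_{\epsilon}),\,\mu(\gamma')=1}\int_{\gamma'}\lambda_{\epsilon}\leq\int_{\gamma}\lambda_{\epsilon}=\int_{\gamma}\lambda.
\end{equation*}
Letting $\epsilon\to 0$ and using the $C^{0}$-continuity of the ECH spectrum (Proposition \ref{properties1}(4)), we have $c_{1}^{\mathrm{ECH}}(L(2,1),\lambda_{\epsilon})\to c_{1}^{\mathrm{ECH}}(L(2,1),\lambda)=L$, whence $\tfrac{1}{2}L\leq\int_{\gamma}\lambda$. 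Taking the infimum over $\gamma\in\mathcal{S}_{2}$ with $\mu(\gamma)=1$ yields the desired inequality.

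The main obstacle is the perturbation in the second paragraph: arranging in one stroke that $\lambda_{\epsilon}$ be globally non-degenerate and dynamically convex while pinning $\gamma$ as a non-degenerate orbit of Conley--Zehnder index exactly $1$. In particular, controlling $\mu_{\lambda_{\epsilon}}(\gamma)$ when the degenerate orbit $\gamma$ has monodromy angle at an integer requires choosing the direction of the transverse perturbation with care; all the remaining steps are soft.
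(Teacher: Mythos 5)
Your proposal is logically sound as a skeleton, but it takes a genuinely different route from the paper, and the step you yourself flag as ``the main obstacle'' is precisely the step the paper goes out of its way to avoid. The paper argues by contradiction: assuming some $\gamma_{\lambda}\in\mathcal{S}_{2}$ with $\mu(\gamma_{\lambda})=1$ has action strictly below $\tfrac{1}{2}c_{1}^{\mathrm{ECH}}(L(2,1),\lambda)$, it invokes Lemma \ref{seqsmooth} (i.e.\ \cite[Lemmas 6.8, 6.9]{HWZ4}) to pin $\gamma_{\lambda}$ as a non-degenerate index-one orbit of some $f_{N}\lambda$ that is non-degenerate and dynamically convex only \emph{up to period} $N$; it then uses a neck-stretching argument (Lemma \ref{riginalchange}, adapted from \cite[Proposition 3.1]{HrS} together with \cite[Proposition 6.8]{HrLS}) to transfer the small action of $\gamma_{\lambda}$ down to an honest non-degenerate dynamically convex form $f\lambda<f_{N}\lambda$, producing an orbit in $\mathcal{S}_{2}(L(2,1),f\lambda)$ of index one with action $<\int_{\gamma_{\lambda}}\lambda$, which contradicts Proposition \ref{prp2} applied to $f\lambda$. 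You instead pin $\gamma$ inside a \emph{globally} non-degenerate, \emph{globally} dynamically convex perturbation $f_{\epsilon}\lambda$ and apply Proposition \ref{prp2} directly, with no holomorphic curves and no contradiction; if your construction works, the argument is shorter and more transparent, and the limiting step via Proposition \ref{properties1}(4) is unobjectionable.

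The load-bearing claim you defer is, however, strictly stronger than what Lemma \ref{seqsmooth} delivers, and you cannot simply quote it from the literature the paper cites: those lemmas give non-degeneracy and index control only up to a finite period, which is exactly why the paper needs the detour through Lemma \ref{riginalchange}. To close your argument you must actually prove: for strictly convex $\lambda$ and a fixed $\gamma\in\mathcal{S}_{2}$ with $\mu(\gamma)=1$, there exist $f_{\epsilon}\to1$ with $f_{\epsilon}\equiv1$ and $df_{\epsilon}\equiv0$ along $\gamma$ such that $f_{\epsilon}\lambda$ is globally non-degenerate, globally dynamically convex, and $\mu_{f_{\epsilon}\lambda}(\gamma)=1$. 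The ingredients are available but each needs to be written down: (i) global dynamical convexity should come from the $C^{2}$-openness of strict convexity (the form $f\lambda$ corresponds to the equivariant hypersurface $x\mapsto\sqrt{f(\rho(x))}\,i(x)$, which remains strictly convex for $f$ $C^{2}$-close to $1$), not from a period-by-period index estimate; (ii) global non-degeneracy with the $1$-jet of $f_{\epsilon}$ pinned along $\gamma$ is a Baire-genericity statement that must be verified in the constrained class (covers of $\gamma$ are handled by the transverse Hessian, all other orbits spend time off $\gamma$ where $f_{\epsilon}$ is free); (iii) for the index, pushing the zero eigenvalue(s) of the asymptotic operator to the positive side realizes the lower-semicontinuous value $\mu_{f_{\epsilon}\lambda}(\gamma)=1$, after which $\mu_{f_{\epsilon}\lambda}(\gamma^{2k})\geq3$ for all $k$ follows from semicontinuity applied to $\gamma^{2}$ together with Lemma \ref{conleyindex} and Proposition \ref{conleycovering}, since the perturbed monodromy angle is forced into $[\tfrac{1}{2},1)$. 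Until (i)--(iii) are carried out, your proof has a genuine gap at its central step; the paper's Lemma \ref{riginalchange} is the alternative if you prefer not to prove them.
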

\begin{proof}[\bf Proof of Proposition \ref{upperbound}]
    We prove this by contradiction. Suppose that there exists $\gamma_{\lambda}\in \mathcal{S}_{2}(L(2,1),\lambda)$ with $\mu(\gamma_{\lambda})=1$ such that $\frac{1}{2}\,c_{1}^{\mathrm{ECH}}(L(2,1),\lambda)>\int_{\gamma_{\lambda}}\lambda$.

    \begin{lem}\label{seqsmooth}
         There exists a sequence of smooth functions $f_{n}:L(2,1)\to \mathbb{R}_{>0}$  such that $f_{n}\to 1$ in $C^{\infty}$-topology and satisfying $f_{n}|_{\gamma_{\lambda}}=1$ and $df_{n}|_{\gamma_{\lambda}}=0$. Moreover, all periodic orbits of $X_{f_{n}\lambda}$ of periods $<n$ are non-degenerate and all contractible orbits of periods $<n$ have Conley-Zehnder index $\geq 3$. In addition, $\gamma_{\lambda}$ is a non-degenerate periodic orbit of $X_{f_{n}\lambda}$ with $\mu(\gamma_{\lambda})=1$ for every $n$.
    \end{lem}
    \begin{proof}[\bf Proof of Lemma \ref{seqsmooth}]
        See \cite[Lemma 6.8, 6.9]{HWZ4}
    \end{proof}

For a sequence of smooth functions $f_{n}:L(2,1)\to \mathbb{R}_{>0}$ in Lemma \ref{seqsmooth}, fix $N>>0$ sufficient large so that $c_{1}^{\mathrm{ECH}}(L(2,1),f_{N}\lambda)>\int_{\gamma_{\lambda}}\lambda$ and $N>2c_{1}^{\mathrm{ECH}}(L(2,1),f_{N}\lambda)$. We can take such $f_{N}$ because $c_{1}^{\mathrm{ECH}}$ is continuous in $C^{0}$-topology.

\begin{lem}\label{riginalchange}
    Let $f:L(2,1)\to \mathbb{R}_{>0}$ be a smooth function such that $f(x)<f_{N}(x)$ for any $x\in L(2,1)$. SUppose that $f\lambda$ is non-degenerate dynamically convex. Then there exists a simple periodic orbit $\gamma \in \mathcal{S}_{2}(L(2,1),f\lambda)$ 
with $\mu(\gamma)=1$ such that $\int_{\gamma}f\lambda<\int_{\gamma_{\lambda}}\lambda$.

\end{lem}
\begin{proof}[\bf Outline of the proof of 
Lemma \ref{riginalchange}]
See \cite[Proposition 3.1]{HrS}. In the proof and statement of \cite[Proposition 3.1]{HrS}, ellipsoids are used instead of $(L(2,1),f_{N}\lambda)$, but the important point in the proof is to find $2$-unknotted self-linking number $-\frac{1}{2}$ orbit with Conley-Zehnder index $1$ and construct a suitable $J$-holomorphic curve from \cite[Proposition 6.8]{HrLS}. Now, we have $\gamma_{\lambda} \in \mathcal{S}_{2}(L(2,1),f_{N}\lambda)$ 
with $\mu(\gamma_{\lambda})=1$ and hence  by applying \cite[Proposition 6.8]{HrLS}, we can construct a suitable $J$-holomorphic curve. By using this curves instead of ones in the original proof, we can show Proposition \ref{riginalchange}. Here we note that the discussion in the proof of Lemma \ref{cocycle} is needed to prove the same result of \cite[Theorem 3.15]{HrS}
\end{proof}
Now, we would complete the proof of Proposition.  Let $f:L(2,1)\to \mathbb{R}_{>0}$ be a smooth function such that $f(x)<f_{N}(x)$ for any $x\in L(2,1)$, $f\lambda$ be non-degenerate strictly convex and $\int_{\gamma_{\lambda}}\lambda<c_{1}^{\mathrm{ECH}}(L(2,1),f\lambda)<c_{1}^{\mathrm{ECH}}(L(2,1),f_{N}\lambda)$. We can check easily that it is possible to take such $f$.  Due to Lemma \ref{riginalchange}, there exists a simple periodic orbit $\gamma \in \mathcal{S}_{2}(L(2,1),f\lambda)$ 
with $\mu(\gamma)=1$ such that $\int_{\gamma}f\lambda<\int_{\gamma_{\lambda}}\lambda$. Since  $\inf_{\gamma\in \mathcal{S}_{2},\mu(\gamma)=1}\int_{\gamma}f\lambda = \frac{1}{2}\,c_{1}^{\mathrm{ECH}}(L(2,1),f\lambda)$, we have $\int_{\gamma_{\lambda}}\lambda<c_{1}^{\mathrm{ECH}}(L(2,1),f\lambda)\leq \int_{\gamma}f\lambda$. This is a contradiction. We complete the proof.
\end{proof}
\subsection{Case of $p=3,4,6$}
At first, we consider general $p$ and after that, we focus on $p=3,4,6$.

Suppose that $(L(p,p-1),\lambda)$ is strictly convex. Then we can take a sequence of strictly convex non-degenerate contact form with  $\lambda_{n}\to \lambda$.  Hence from Proposition \ref{prp1}, we can find $\gamma_{n}\in \mathcal{S}_{p}(L(p,p-1),\lambda_{n})$ with $\mu(\gamma_{n})=1$ such that $ \int_{\gamma_{n}}\lambda_{n}\leq \frac{1}{2}\,c_{1}^{\mathrm{ECH}}(L(p,p-1),\lambda_{n})$. 
If $\gamma_{n}$ converges to a simple orbit $\gamma$, It follows  from the same mathod of Proposition \ref{degeneratel2} and Claim \ref{limittingorbit} that $\gamma\in \mathcal{S}_{p}(L(p,p-1),\lambda)$ and $\mu(\gamma)=1$. But since $\mu(\gamma^{p})$ may be larger than $3$ for $p\geq 3$, the limiting orbit $\gamma$ of $\gamma_{n}$ may not be simple and hence we can't say $\gamma\in \mathcal{S}_{p}(L(p,p-1),\lambda)$ directly. In this subsection, we observe the limiting behavior of $\gamma_{n}$ and find a subsequence converging to a simple orbit $\gamma$ in the case of $p=3,4,6$.

At first, we recall how to find $\gamma_{n}\in \mathcal{S}_{p}$ in $(L(p,p-1),\lambda_{n})$. Due to Section 3,4, from the algenbraic structure of ECH and the behaviors of $J$-holomorphic curves, we can find an ECH generator $\alpha_{n}$ satisfying $\langle U_{J_{n},z}\alpha_{n},\emptyset \rangle \neq 0$ and $A(\alpha_{n})\leq c_{1}^{\mathrm{ECH}}(L(p,p-1),\lambda_{n})$. In addition, we can see that if $\langle U_{J_{n,z}}\alpha_{n},\emptyset \rangle \neq 0$, $\alpha_{n}$ is described as either
\begin{itemize}
    \item[(1).] $\alpha_{n}=(\gamma_{n},p)$ with $\gamma_{n}\in \mathcal{S}_{p}$, $\mu(\gamma_{n}^{p})=3$ and $\mu(\gamma_{n})=1$, or
    \item[(2).]$\alpha_{n}=(\gamma_{n,1},1)\cup{(\gamma_{n,2},1)}$ with $\gamma_{n,1}, \gamma_{n,2}\in \mathcal{S}_{p}$, $\mu(\gamma_{n,1})=\mu(\gamma_{n,2})=1$.
\end{itemize}
In any cases, we obtain exactly what we want.

Now, for $\lambda_{n}\to \lambda$, we pick and fix a sequence of ECH
generators $\alpha_{n}$ satisfying  $\langle U_{J_{n},z}\alpha_{n},\emptyset \rangle \neq 0$ and $A(\alpha_{n})\leq c_{1}^{\mathrm{ECH}}(L(p,p-1),\lambda_{n})$.

\begin{lem}\label{simpleconver}
    Suppose that $\{\alpha_{n}\}$ consists of $\alpha_{n}=(\gamma_{n},p)$ with $\gamma_{n}\in \mathcal{S}_{p}$, $\mu(\gamma_{n}^{p})=3$ and $\mu(\gamma_{n})=1$. Then there exist a simple orbit $\gamma$ of $(L(p,p-1),\lambda)$ and a subsequence $\{\gamma_{n_{k}}\}$ such that $\gamma_{n_{k}}\to \gamma$. In particular, $\gamma\in \mathcal{S}_{p}$, $\mu(\gamma^{p})=3$ and $\mu(\gamma)=1$.
\end{lem}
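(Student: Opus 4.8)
The plan is to run the same compactness scheme that was used for $p=2$ in Claim \ref{limittingorbit}, the only genuinely new ingredient being an index estimate from Proposition \ref{conleycovering} that excludes multiple covering in the limit. The decisive feature of this case, as opposed to the case $\alpha_n=(\gamma_{n,1},1)\cup(\gamma_{n,2},1)$, is that here we carry the extra hypothesis $\mu(\gamma_n^p)=3$, and this is exactly what will pin the covering multiplicity to $1$.

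First I would extract the limit. Since $A(\alpha_n)=p\int_{\gamma_n}\lambda_n=pT_{\gamma_n}\le c_1^{\mathrm{ECH}}(L(p,p-1),\lambda_n)$ and $c_1^{\mathrm{ECH}}$ is continuous in the $C^0$-topology (Proposition \ref{properties1}(4)), the numbers $c_1^{\mathrm{ECH}}(L(p,p-1),\lambda_n)$ converge to $c_1^{\mathrm{ECH}}(L(p,p-1),\lambda)<\infty$, so the periods $T_{\gamma_n}$ are uniformly bounded. As $\lambda_n\to\lambda$ in $C^\infty$ the Reeb fields $X_{\lambda_n}$ converge, and the minimal periods of their orbits are uniformly bounded below by a positive constant; hence, reparametrising over $\mathbb{R}/\mathbb{Z}$ and applying Arzel\`a--Ascoli, a subsequence $\gamma_{n_k}$ converges in $C^\infty$ to a nonconstant periodic orbit $\gamma$ of $\lambda$. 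By lower semicontinuity of the Conley--Zehnder index together with Lemma \ref{conleyindex}(1) we obtain $\mu(\gamma)=1$, and since $\gamma^p$ is contractible, lower semicontinuity together with dynamical convexity gives $\mu(\gamma^p)=3$.

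The heart of the argument, and the main obstacle, is to show that $\gamma$ is simple. Suppose instead $\gamma=\gamma'^m$ for a simple orbit $\gamma'$ with $m\ge 2$. For $k$ large $\gamma_{n_k}$ is $C^0$-close, hence homotopic as a loop, to $\gamma$, so $[\gamma]=[\gamma_{n_k}]$ generates $H_1(L(p,p-1))\cong\mathbb{Z}/p\mathbb{Z}$ (as $\gamma_{n_k}\in\mathcal{S}_{p}$). From $[\gamma]=m[\gamma']$ one deduces that $[\gamma']$ also generates and $\gcd(m,p)=1$; in particular $\gamma'^p$ is contractible, so $\mu(\gamma'^p)\ge 3$ by dynamical convexity. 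Applying Proposition \ref{conleycovering}(2) to the symplectic path of $\gamma'^p$ then gives $\mu(\gamma^p)=\mu\bigl((\gamma'^p)^m\bigr)\ge 2m+1\ge 5$, contradicting $\mu(\gamma^p)=3$. Hence $m=1$ and $\gamma$ is simple. Finally, as $\gamma$ is simple, embedded and transverse to $\xi$, for $k$ large $\gamma_{n_k}$ is transversally isotopic to $\gamma$, so $\gamma$ inherits $p$-unknottedness and $\mathrm{sl}(\gamma)=-\tfrac{1}{p}$, i.e.\ $\gamma\in\mathcal{S}_{p}$, with $\mu(\gamma)=1$ and $\mu(\gamma^p)=3$. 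I would stress that the simplicity step is exactly where the input $\mu(\gamma_n^p)=3$ is indispensable: dropping it, one can only bound $m$ rather than force $m=1$, which is precisely the difficulty that necessitates the restriction $p=3,4,6$ in the companion case $\alpha_n=(\gamma_{n,1},1)\cup(\gamma_{n,2},1)$.
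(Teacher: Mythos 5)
Your proposal is correct and follows essentially the same route as the paper, which proves Lemma \ref{simpleconver} by reducing to the argument of Claim \ref{limittingorbit}: extract a $C^{\infty}$-limit via Arzel\`a--Ascoli (periods being bounded by the convergent spectra), use that $\gamma'^{p}$ is contractible together with Proposition \ref{conleycovering}(2) to show a putative multiple cover $\gamma=\gamma'^{m}$, $m\geq 2$, would force $\mu(\gamma^{p})\geq 2m+1\geq 5$ against $\mu(\gamma^{p})=3$, and then transfer $p$-unknottedness and the self-linking number by transversal isotopy. The only nitpick is your appeal to Lemma \ref{conleyindex}(1) for the limit orbit: that lemma is stated for non-degenerate forms, so to exclude $\mu(\gamma)=0$ one should instead argue as in Claim \ref{limittingorbit}, via Proposition \ref{conleycovering}(1) and dynamical convexity of the contractible orbit $\gamma^{p}$.
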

\begin{proof}[\bf Proof of Lemma \ref{simpleconver}]
    The proof is the same as the one of Claim \ref{limittingorbit}.
\end{proof}

Next, suppose that  $\{\alpha_{n}\}$ consists of $\alpha_{n}=(\gamma_{n,1},1)\cup{(\gamma_{n,2},1)}$ with $\gamma_{n,1}, \gamma_{n,2}\in \mathcal{S}_{p}$, $\mu(\gamma_{n,1})=\mu(\gamma_{n,2})=1$. By  Arzel`a–Ascoli theorem, we may assume that there are simple orbits $\gamma_{\infty,1}$ and $\gamma_{\infty,2}$ such that $\gamma_{n,1}\to \gamma_{\infty,1}^{k_{1}}$ and $\gamma_{n,2} \to \gamma_{\infty,2}^{k_{2}}$ for some $k_{1},k_{2}\in \mathbb{Z}_{>0}$.

\begin{lem}\label{hoplinks}
 If $\gamma_{\infty,1}\neq \gamma_{\infty,2}$, then $k_{1}=k_{2}=1$. In particular, $\gamma_{\infty,1},\gamma_{\infty,2}\in \mathcal{S}_{p}$ and $\mu(\gamma_{\infty,1})=\mu(\gamma_{\infty,2})=1$.
\end{lem}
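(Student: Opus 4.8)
The plan is to combine an iteration/semicontinuity estimate for the Conley--Zehnder index with the SFT limit of the index-$2$ curves produced by the $U$-map, using the hypothesis $\gamma_{\infty,1}\neq\gamma_{\infty,2}$ precisely to guarantee that the limiting object is an honest orbit set on which the bookkeeping of Lemma \ref{Umap} can be replayed. Write $\beta_i=\gamma_{\infty,i}$. First I would record that, since $\gamma_{n,i}\in\mathcal{S}_p$ is $p$-unknotted, $[\gamma_{n,i}]$ generates $H_{1}(L(p,p-1))\cong\mathbb{Z}/p\mathbb{Z}$; as $[\gamma_{n,i}]=k_i[\beta_i]$ for $n$ large, both $[\beta_i]$ and $k_i$ are units modulo $p$, and in particular $\beta_i^{p}$ is contractible.

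Next I would pin down $k_i$ by index estimates insensitive to degeneracy. Lower semicontinuity of $\mu_{CZ}$ together with $\mu(\gamma_{n,i})=1$ gives $\mu(\beta_i^{k_i})\le 1$, while dynamical convexity applied to the contractible orbit $\beta_i^{p}$ gives $\mu(\beta_i^{p})\ge 3$. Writing $\rho_i$ for the mean rotation number of $\beta_i$, so that $2m\rho_i-1\le\mu(\beta_i^{m})\le 2m\rho_i+1$ with Proposition \ref{conleycovering} controlling the iterates, these two facts read $k_i\rho_i\le 1$ and $p\rho_i\ge 1$, whence $k_i\le p$. Since $k_i$ is a unit modulo $p$ and there are exactly two units in $\{1,\dots,p\}$ when $p\in\{3,4,6\}$, namely $1$ and $p-1$, I conclude $k_i\in\{1,p-1\}$.

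It remains to exclude $k_i=p-1$, and here I would pass to the holomorphic curves. Because $\langle U_{J_n,z}\alpha_n,\emptyset\rangle\neq 0$, Lemma \ref{Umap} furnishes embedded genus-$0$ curves $u_n\in\mathcal{M}_{2}^{J_n}(\alpha_n,\emptyset)$, and by SFT compactness a subsequence converges to a building whose top level is asymptotic to $\alpha_\infty=(\beta_1,k_1)\cup(\beta_2,k_2)$ over $\emptyset$. The assumption $\gamma_{\infty,1}\neq\gamma_{\infty,2}$ is exactly what makes $\alpha_\infty$ a legitimate orbit set built from two \emph{distinct} simple orbits, so the genus/index computation of Lemma \ref{Umap} applies orbit by orbit: with $\mu(\beta_i^{k_i})=1$ it forces the number $n_i$ of positive ends at $\beta_i$ to equal $1$. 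On the other hand, for an elliptic $\beta_i$ with $k_i\rho_i<1$ the partition condition of Proposition \ref{partitioncondition} forces the outgoing partition at $\beta_i$ to be $(1,\dots,1)$, i.e. $n_i=k_i$ ends of multiplicity $1$. For $k_i=p-1\ge 2$ these are incompatible, so $k_i=1$; the conclusions $\beta_i\in\mathcal{S}_p$ and $\mu(\beta_i)=1$ then follow as in Claim \ref{limittingorbit} and Lemma \ref{muzukasiiyo2}.

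The main obstacle is the borderline configuration $\rho_i=1/(p-1)$, where $k_i\rho_i=1$ makes $\beta_i^{k_i}$ \emph{degenerate}, so the partition condition of Proposition \ref{partitioncondition} is not directly available, together with the attendant need to rule out nontrivial lower levels in the SFT limit (the degenerate analogue of Lemma \ref{emptyindex1}). This is precisely why the statement is restricted to $p\in\{3,4,6\}$: there only the single value $k_i=p-1$ survives the arithmetic of the second step, so one is left with a single degenerate configuration to defeat, which I would handle by perturbing $\lambda$ once more to split $\beta_i^{p-1}$ and reapplying the partition analysis, or equivalently by the self-linking computation of Lemma \ref{muzukasiiyo2} applied to the limiting curve. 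For general $p$ the abundance of admissible units $k_i$ makes this final step fail, in agreement with Remark \ref{mainrem}(2).
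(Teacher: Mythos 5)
Your proposal does not close the argument: the decisive step, excluding $k_{i}=p-1$, is never actually carried out. You narrow $k_{i}$ down to $\{1,p-1\}$ by combining lower semicontinuity of $\mu_{CZ}$ with dynamical convexity of $\beta_{i}^{p}$ and the arithmetic of units mod $p$, which is fine as far as it goes, but the proposed exclusion of $k_{i}=p-1$ rests on replaying the index/partition bookkeeping of Lemma \ref{Umap} and Proposition \ref{partitioncondition} on the SFT limit of the $U$-map curves. That machinery requires a non-degenerate contact form and non-degenerate asymptotic orbits, whereas the limit form $\lambda$ is degenerate and $\beta_{i}$ may well be a degenerate orbit; you acknowledge exactly this obstacle (the borderline rotation number) and then wave at ``perturbing $\lambda$ once more,'' which reintroduces the very convergence problem the lemma is trying to solve. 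In addition, your argument is confined from the outset to $p\in\{3,4,6\}$, but Lemma \ref{hoplinks} is stated and used for general $p$ --- the restriction to $p=3,4,6$ in the paper enters only in the other case, $\gamma_{\infty,1}=\gamma_{\infty,2}$ (Lemma \ref{mutuallyprime}).

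The paper's proof is purely topological and much shorter, and it is precisely where the hypothesis $\gamma_{\infty,1}\neq\gamma_{\infty,2}$ earns its keep: since the two limit orbits are distinct simple orbits, they admit disjoint tubular neighborhoods $V_{1},V_{2}$, and for $n$ large each $\gamma_{n,i}$ lies in $V_{i}$. Fixing a large $n_{0}$, the orbit $\gamma_{n,2}$ is then isotopic to $\gamma_{\infty,2}^{k_{2}}$ inside $L(p,p-1)\setminus\gamma_{n_{0},1}$. Because $[\gamma_{n,2}]$ generates $H_{1}(L(p,p-1)\setminus\gamma_{n_{0},1})\cong\mathbb{Z}$ (a consequence of the linked global-surface-of-section structure from Lemma \ref{Umap}) while also being equal to $k_{2}[\gamma_{\infty,2}]$ in that group, a generator of $\mathbb{Z}$ is $k_{2}$ times some class, which forces $k_{2}=1$; the same argument gives $k_{1}=1$, and the remaining conclusions follow as in Claim \ref{limittingorbit}. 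If you want to salvage your route, you would need a degenerate-limit substitute for the partition conditions, which is a substantially harder problem than the one the lemma poses; the homological argument in the knot complement avoids it entirely.
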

\begin{proof}[\bf Proof of Lemma \ref{hoplinks}]
Without loss of generality, we focus on $i=2$. Note that $H_{1}(L(p,p-1)\backslash \gamma_{n,1})\cong \mathbb{Z}$ and $[\gamma_{n,2}]$ generates $H_{1}(L(p,p-1)\backslash \gamma_{n,1})$.
    Take small neighborhoods $\gamma_{\infty,i}\subset V_{i}$ for $i=1,2$ satisfying $V_{1}\cap{V_{2}}=\emptyset$. Then, $\gamma_{n,i}\subset V_{i}$ for sufficiently large $n$. This means that when we fix large $n_{0}$, $\gamma_{n,2}$ is isotopic to $\gamma_{\infty,2}^{k}$ in $L(p,p-1)\backslash \gamma_{n_{0},1}$ for any $n\geq n_{0}$. Hence  $[\gamma_{n,2}]$ generates $H_{1}(L(p,p-1)\backslash \gamma_{n_{0},1})$ and $[\gamma_{n,2}]=k_{2}[\gamma_{\infty,2}]$. Therefore we have $k_{2}=1$. In particular, by the same method with Claim \ref{limittingorbit}, we have $\gamma_{\infty,2}\in \mathcal{S}_{p}$ and $\mu(\gamma_{\infty,2})=1$.  We complete the proof.
\end{proof}

\begin{lem}\label{mutuallyprime}
  If $\gamma_{\infty,1}= \gamma_{\infty,2}$, then $k_{1}+k_{2}=p$. In addition, both $k_{1}$ and $k_{2}$ are mutually prime with $p$.
\end{lem}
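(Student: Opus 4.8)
The plan is to extract both statements from the homology of $L(p,p-1)$, with the equality $k_{1}+k_{2}=p$ requiring in addition the rational open book structure. Throughout I take $n$ large, so that $\gamma_{n,1}$ and $\gamma_{n,2}$ lie in a fixed tubular neighborhood $N\cong S^{1}\times \mathbb{D}^{2}$ of the common simple limit $\gamma_{\infty}:=\gamma_{\infty,1}=\gamma_{\infty,2}$, being $C^{\infty}$-close to $\gamma_{\infty}^{k_{1}}$ and $\gamma_{\infty}^{k_{2}}$ respectively.

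First I would treat the coprimality assertion, which is purely homological. Since $\gamma_{n,i}$ is $C^{0}$-close to $\gamma_{\infty}^{k_{i}}$ it is freely homotopic to it, so $[\gamma_{n,i}]=k_{i}[\gamma_{\infty}]$ in $H_{1}(L(p,p-1))\cong \mathbb{Z}/p\mathbb{Z}$. As $\gamma_{n,i}\in \mathcal{S}_{p}$ is $p$-unknotted with $\mathrm{sl}(\gamma_{n,i})=-\tfrac{1}{p}$, its class $[\gamma_{n,i}]$ generates $\mathbb{Z}/p\mathbb{Z}$ (as for any orbit in $\mathcal{S}_{p}$). Writing $d$ for the order of $[\gamma_{\infty}]$, the element $k_{i}[\gamma_{\infty}]$ lies in the cyclic group $\langle [\gamma_{\infty}]\rangle$ of order $d$ and yet generates the whole group of order $p$; hence $d=p$ and $\gcd(k_{i},p)=1$, which is the mutually prime assertion. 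Substituting into $[\alpha_{n}]=[\gamma_{n,1}]+[\gamma_{n,2}]=(k_{1}+k_{2})[\gamma_{\infty}]=0$ and using $\mathrm{ord}\,[\gamma_{\infty}]=p$ gives $p\mid (k_{1}+k_{2})$.

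It remains to upgrade $p\mid (k_{1}+k_{2})$ to the equality $k_{1}+k_{2}=p$, that is, to rule out $k_{1}+k_{2}\in\{2p,3p,\dots\}$; this is the substantive part and cannot be seen by homology alone. Here I would use the genus-$0$ global surface of section $\Sigma_{n}=\pi(u_{n})$ supplied by Lemma \ref{Umap}: it is an embedded page of a rational open book of $L(p,p-1)$ with binding $\gamma_{n,1}\cup \gamma_{n,2}$, each component carrying multiplicity $1$. The induced fibration of $M_{n}:=L(p,p-1)\setminus(\gamma_{n,1}\cup \gamma_{n,2})$ over $S^{1}$ gives a page-crossing homomorphism $\psi_{n}\colon H_{1}(M_{n})\to \mathbb{Z}$, $c\mapsto c\cdot \Sigma_{n}$, with $\psi_{n}$ of a meridian of each binding component equal to $1$. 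Evaluating $\psi_{n}$ on the core $\gamma_{\infty}$, pushed slightly off the binding, in two ways—once as the total linking $\mathrm{lk}(\gamma_{\infty},\gamma_{n,1})+\mathrm{lk}(\gamma_{\infty},\gamma_{n,2})$, which the winding data in $N$ expresses through $k_{1}+k_{2}$ and the order $p$, and once using that $\gamma_{\infty}$ is essentially the Reeb direction along $N$ and so crosses the pages a fixed bounded number of times—should pin $k_{1}+k_{2}$ to exactly $p$.

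I expect this last step to be the main obstacle. Homologically everything is only visible modulo $p$, so the whole difficulty is the exact count: I must control the geometry as the two binding components $\gamma_{n,1},\gamma_{n,2}$ coalesce onto the single orbit $\gamma_{\infty}$ and verify that no spurious factor enters the page-crossing normalization, equivalently that in the coalescing limit the rational open book has binding $\gamma_{\infty}$ with total multiplicity exactly $k_{1}+k_{2}$, so that the fundamental-group computation in the proof of Lemma \ref{Umap} (yielding a cyclic group of order $k_{1}+k_{2}$, forced to equal $\pi_{1}(L(p,p-1))\cong \mathbb{Z}/p\mathbb{Z}$) gives $k_{1}+k_{2}=p$ rather than a proper multiple of $p$.
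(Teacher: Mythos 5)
Your homological part is exactly the paper's: closeness of $\gamma_{n,i}$ to $\gamma_{\infty}^{k_{i}}$ gives $[\gamma_{n,i}]=k_{i}[\gamma_{\infty}]$, the fact that $[\gamma_{n,i}]$ generates $H_{1}(L(p,p-1))\cong\mathbb{Z}/p\mathbb{Z}$ forces $[\gamma_{\infty}]$ to generate and $\gcd(k_{i},p)=1$, and $[\alpha_{n}]=0$ gives $p\mid(k_{1}+k_{2})$. That much is correct. The problem is the step you yourself flag as ``the main obstacle'': upgrading $p\mid(k_{1}+k_{2})$ to $k_{1}+k_{2}=p$. Your proposed route through the rational open book and a page-crossing/linking count is not carried out, and as sketched it does not close. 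The pages $\Sigma_{n}$ are global surfaces of section for $X_{\lambda_{n}}$, not for $X_{\lambda}$, so $\gamma_{\infty}$ (a $\lambda$-orbit) has no a priori positive or even transverse intersection with them; the normalization of your homomorphism $\psi_{n}$ on meridians lives in $L(p,p-1)\setminus(\gamma_{n,1}\cup\gamma_{n,2})$, whose topology is not uniformly controlled as the two binding components coalesce onto multiples of a single curve; and nothing in the sketch excludes, say, $(k_{1},k_{2})=(p+1,p-1)$, which passes both the coprimality and the divisibility tests. ``Crosses the pages a fixed bounded number of times'' is precisely the quantitative statement that would need proof, and it is not supplied.

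The paper closes this gap by a purely index-theoretic argument, which you should compare with. If $k_{1}+k_{2}\geq 2p$ then (using coprimality) some $k_{i}\geq p+1$, say $k_{1}$. Dynamical convexity gives $\mu(\gamma_{\infty}^{p})\geq 3$, hence by Proposition \ref{conleycovering}(2), $\mu\bigl((\gamma_{\infty}^{p})^{k_{1}}\bigr)\geq 2k_{1}+1\geq 2p+3$. On the other hand, non-degeneracy together with $\mu(\gamma_{n,1})=1$ forces $\mu(\gamma_{n,1}^{p})\leq 2p-1$ for every $n$ (hyperbolic: $\mu(\gamma_{n,1}^{p})=p$; elliptic: the monodromy angle satisfies $0<\theta_{n}<1$, so $\mu(\gamma_{n,1}^{p})=2\lfloor p\theta_{n}\rfloor+1\leq 2p-1$). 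Since $\gamma_{n,1}^{p}\to\gamma_{\infty}^{pk_{1}}$ and $\mu$ is lower semi-continuous, $\mu\bigl((\gamma_{\infty}^{p})^{k_{1}}\bigr)\leq 2p-1$, a contradiction. This uses only data you already have in hand ($\mu(\gamma_{n,i})=1$, dynamical convexity, the iteration formulas) and avoids any control of the degenerating surfaces of section. You should replace your final paragraph with an argument of this type, or else supply the missing coalescing-limit analysis, which is substantially harder.
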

\begin{proof}[\bf Proof of Lemma \ref{mutuallyprime}]

For simplicity, write $\gamma_{\infty}:=\gamma_{\infty,1}=\gamma_{\infty,2}$. Since $\gamma_{n,1}\to \gamma_{\infty}^{k_{1}}$ and $\gamma_{n,2} \to \gamma_{\infty}^{k_{2}}$, we have  $[\gamma_{n,1}]=k_{1}[\gamma_{\infty}]$ and $[\gamma_{n,2}]=k_{2}[\gamma_{\infty}]$ in $H_{1}(L(p,p-1))\cong \mathbb{Z}/p\mathbb{Z}$. Since $[\gamma_{n,1}]+[\gamma_{n,2}]=0$ and $[\gamma_{n,1}]$  generates $H_{1}(L(p,p-1))$, $[\gamma_{\infty}]$ also generates $H_{1}(L(p,p-1))$ and $k_{1}, k_{2}$ are mutually prime with $p$. Moreover, $k_{1}+k_{2}=kp$ for some $k\in \mathbb{Z}_{>0}$.

Suppose that $k\geq 2$. Then $k_{1}\geq p+1$ or $k_{2}\geq p+1$. Without loss of generality, we may assume $k_{1}\geq p+1$.  

 Since $\gamma_{\infty}^{p}$ is contractible, we have $\mu(\gamma_{\infty}^{p})\geq 3$ and hence $\mu((\gamma_{\infty}^{p})^{k_{1}})\geq 2k_{1}+1\geq2p+3$ (Proposition \ref{conleycovering}). 

On the other hand, for any $n$,  $2p-1\geq \mu(\gamma_{n,1}^{p})$. Indeed, if $\gamma_{n,1}$ is hyperbolic, $\mu(\gamma_{n,1}^{p})=p\mu(\gamma_{n,1})=p$. If $\gamma_{n,1}$ is elliptic, since  $\gamma_{n,1}$ is non-degenerate, there is $\theta_{n}\in \mathbb{R}\backslash \mathbb{Q}$ such that $\mu(\gamma_{n,1}^{m})=2\lfloor m\theta_{n} \rfloor+1$ for every $m\in \mathbb{Z}_{>0}$. Note that $0<\theta_{n}<1$ because $\mu(\gamma_{n,1})=1$. Hence we have $\mu(\gamma_{n,1}^{p})= 2\lfloor p\theta_{n} \rfloor+1\leq 2p-1$.

Now, we have $2p-1\geq \mu(\gamma_{n,1}^{p})$. Since $\gamma_{n,1}^{p}\to \gamma_{\infty}^{pk_{1}}$ and $\mu$ is lower semi-continuous, we have $2p-1\geq \mu(\gamma_{\infty}^{pk_{1}})=\mu((\gamma_{\infty}^{p})^{k_{1}})$, but this contradicts $\mu((\gamma_{\infty}^{p})^{k_{1}})\geq 2k_{1}+1\geq2p+3$.  Therefore, we have $k=1$ and complete the proof.
\end{proof}
\begin{proof}[\bf Complete the proof in the case of $p=3,4,6$]
For $\lambda_{n}\to \lambda$, consider a sequence of ECH generators $\alpha_{n}$ satisfying  $\langle U_{J_{n},z}\alpha_{n},\emptyset \rangle \neq 0$ and $A(\alpha_{n})\leq c_{1}^{\mathrm{ECH}}(L(p,p-1),\lambda_{n})$. 

If $\{\alpha_{n}\}$ contains an infinity subsequence consisting of  $\alpha_{n}=(\gamma_{n,1},p)$ with $\gamma_{n}\in \mathcal{S}_{p}$, $\mu(\gamma_{n}^{p})=3$, we can apply Lemma \ref{simpleconver} and hence we obtain $\gamma \in \mathcal{S}_{p}$ satisfying $\mu(\gamma)=1$ and $p\int_{\gamma}\lambda \leq c_{1}^{\mathrm{ECH}}(L(p,p-1),\lambda)$.

If $\{\alpha_{n}\}$ contains an infinity subsequence consisting of $\alpha_{n}=(\gamma_{n,1},1)\cup{(\gamma_{n,2},1)}$ with $\gamma_{n,1}, \gamma_{n,2}\in \mathcal{S}_{p}$, $\mu(\gamma_{n,1})=\mu(\gamma_{n,2})=1$. In addition, suppose that $\gamma_{n,1}$ and $\gamma_{n,2}$ converge to different orbits, then we can apply Lemma \ref{hoplinks} and hence we obtain two simple orbits $\gamma_{\infty,1} \gamma_{\infty,2} \in \mathcal{S}_{p}$ with $\mu(\gamma_{\infty,1})=\mu(\gamma_{\infty,2})=1$ satisfying $\gamma_{n,1}\to \gamma_{\infty,1}$, $\gamma_{n,2} \to \gamma_{\infty,2}$. Hence we have $ \int_{\gamma_{\infty,1}}\lambda + \int_{\gamma_{\infty,2}}\lambda\leq c_{1}^{\mathrm{ECH}}(L(p,p-1),\lambda)$.

If $\{\alpha_{n}\}$ contains an infinity subsequence consisting of $\alpha_{n}=(\gamma_{n,1},1)\cup{(\gamma_{n,2},1)}$ with $\gamma_{n,1}, \gamma_{n,2}\in \mathcal{S}_{p}$ and $\mu(\gamma_{n,1})=\mu(\gamma_{n,2})=1$. In addition, suppose that $\gamma_{n,1}$ and $\gamma_{n,2}$ converge to the same orbit $\gamma_{\infty}$ with some multiplicities $k_{1},k_{2}$. Then we can apply Lemma \ref{mutuallyprime}. If $p=3,4,6$, any pairs $(k_{1},k_{2})$ satisfying Lemma \ref{mutuallyprime} contain $1$. Therefore $(k_{1},k_{2})=(1,p-1)$ or $(p-1,1)$. This implies that $\gamma_{n,1}\to \gamma_{\infty}$, $\gamma_{n,2}\to \gamma_{\infty}^{p-1}$ or $\gamma_{n,1}\to \gamma_{\infty}^{p-1}$, $\gamma_{n,2}\to \gamma_{\infty}$ and hence we have $\gamma_{\infty}\in \mathcal{S}_{p}$ and $\mu(\gamma_{\infty})=1$ by the same way as Claim \ref{limittingorbit}. Moreover, we have $ p\int_{\gamma_{\infty}}\lambda\leq c_{1}^{\mathrm{ECH}}(L(p,p-1),\lambda)$.

In any cases, we complete the proof.
\end{proof}

\section{Construction of a family of $J$-holomorphic curves}
The purpose of this section is to prove Theorem \ref{familyofhol}.

Recall the conditions.

Let $\gamma_{1}:\mathbb{R}/T_{1}\mathbb{Z}\to Y$ and $\gamma_{2}:\mathbb{R}/T_{2}\mathbb{Z}\to Y$ be simple periodic orbits with $\mu(\gamma_{1})=\mu(\gamma_{2})=1$. Here we fix their parametrizations. Let $\mathcal{P}^{J}$ denote the set of $J$-holomorphic curves $(a,u):(\mathbb{R}\times S^{1},j_{0})\to (\mathbb{R}\times Y,J)$ with $u(s,t)\to\gamma_{1}(T_{1}t+e_{1})$ as $s\to +\infty$ and $u(s,t)\to\gamma_{1}(-T_{2}t+e_{2})$ as $s\to -\infty$ for some $e_{i}\in \mathbb{R}$ where $j_{0}(\partial_{s})=\partial_{t}$. Note that $a(s,t)\to +\infty$ as $s\to \pm \infty$. 

In order to prove the theorem, we recall the description of $P^{J}$ as a finite dimensional submanifold in a suitable infinite dimensional Banach manifold according to \cite{Dr}.  

For $\gamma_{i}$, there is an open neighborhood $\gamma_{i}(\mathbb{R}/T_{i}\mathbb{Z})\subset U_{i} \subset Y$, an open neighborhood $0 \in V_{i}\subset \mathbb{R}^{2}$ and a diffeomorphism $\phi_{i}:S^{1} \times V_{i}\to U_{i}$ satisfying  $\phi_{i}(t,0)=\gamma_{i}(T_{i}t)$ and $\phi_{i}^{*}\lambda=f_{i}\lambda_{0}$
where $f_{i}$ is a positive map $f_{i}:S^{1} \times V_{i} \to \mathbb{R}$ with $f_{i}(t,0,0)=T_{i}$, $df_{i}(t,0,0)=0$ and $\lambda_{0}$ denotes the standard contact structure on $\mathbb{R}^{3}$ \cite{HWZ1}. 

\begin{dfn}\cite[cf. Definiton 4]{Dr}
    Let $\delta>0$ and $q>2$. $(a,u)\in C^{\infty}(\mathbb{R}\times S^{1},\mathbb{R}\times Y)$ is $(\delta,1,q)$–convergence to $\gamma_{1}$, $-\gamma_{2}$ if it satisfies the following properties.
    \item[1.] there is a sufficient large number $R>>$ such that $u([R,+\infty)\times S^{1})\subset U_{1}$ and $u( (-\infty,-R]\times S^{1})\subset U_{2}$.
    \item[2.] Let $\phi^{-1}_{1}(u(s,t))=(\theta_{1}(s,t),z_{1}(s,t))$ and $\phi^{-1}_{2}(u(s,t))=(\theta_{2}(s,t),z_{2}(s,t))$. Then there are $d_{i}\in \mathbb{R}$ and $c_{i}\in \mathbb{R}$ such that 
\begin{equation}
       e^{\delta s}(a_{i}(\epsilon_{i} s,t)-\epsilon_{i} T_{i}s-d_{i}),\,\,e^{\delta s}(\theta_{i}(\epsilon_{i} s,t)-\epsilon_{i} t-c_{i})\in W^{1,q}([R,+\infty)\times S^{1},\mathbb{R})
   \end{equation}
   and 
\begin{equation}
    e^{\delta s}z_{i}(\epsilon_{i}s,t)\in W^{1,q}([R,+\infty)\times S^{1},\mathbb{R}^{2})
\end{equation}
where $\epsilon_{1}=+1$, $\epsilon_{2}=-1$.
\end{dfn}

Define $\mathcal{B}^{\infty}_{\delta} \subset   C^{\infty}(\mathbb{R}\times S^{1},\mathbb{R}\times Y)$ as the set consisting of $(\delta,1,q)$–convergence elements to $\gamma_{1}$, $-\gamma_{2}$ of $ C^{\infty}(\mathbb{R}\times S^{1},\mathbb{R}\times Y)$.

Note that for sufficiently small $\delta$, any element in $P^{J}$ is $(\delta,1,q)$–convergence and hence $P^{J} \subset \mathcal{B}^{\infty}_{\delta}$. See Proposition \ref{asymptics} or \cite[cf.Theorem 1.3]{HWZ1}.

Next we complete $\mathcal{B}^{\infty}_{\delta}$ to a suitable Banach manifold. Fix a Riemannian metric $g_{J}$ on $\mathbb{R}\times Y$ where
\begin{equation}
    g_{J}(a\partial_{s}+h,b\partial_{s}+k)=ab+\lambda(h)\lambda(k)+d\lambda(h,Jk).
\end{equation}

Let $(a,u)\in \mathcal{B}^{J}$ and $\Tilde{\gamma}:\mathbb{R}\times S^{1}\to T(\mathbb{R}\times Y)$ such that $\Tilde{\gamma}(s,t)\in T_{(a(s,t),u(s,t))}(\mathbb{R}\times Y)$.

\begin{dfn} \cite[cf. Definition 5.]{Dr}
For $\Tilde{\gamma}\in W^{1,q}_{\mathrm{loc}}((a,u)^{*}T(\mathbb{R}\times Y))$,
write 
\begin{equation*}
    \Tilde{\gamma}(s,t)=(b(s,t)\partial_{t},h(s,t)X_{\lambda}(u(s,t))+Q(u(s,t)))
\end{equation*}
where $Q(u(s,t))\in \xi_{u(s,t)}$.
If $b,h,Q$ satisfies, 
\begin{equation*}
    e^{\delta s}(b,h)\in W^{1,q}([R,\infty)]\times S^{1},\mathbb{R}^{2}),\,\,\, e^{-\delta s}(b,h)\in W^{1,q}((-\infty,-R]\times S^{1},\mathbb{R}^{2})
    \end{equation*}
    and 
     \begin{equation*}
     e^{\delta s} Q\in  W^{1,q}((u|_{[R,\infty)]\times S^{1}})^{*}\xi),\,\,e^{-\delta s} Q\in  W^{1,q}((u|_{(-\infty,-R]\times S^{1}})^{*}\xi)
\end{equation*}
for a sufficiently large $R>>0$. Then we say $\Tilde{\gamma}\in W^{1,q}_{\delta}((a,u)^{*}T(\mathbb{R}\times Y))$.
\end{dfn}

Let $\Tilde{h}=(a,u)\in \mathcal{B}^{\infty}$ and suppose that $\Tilde{h}([R, +\infty)\times S^{1})\subset U_{1}$, $\Tilde{h}( (-\infty,-R]\times S^{1})\subset U_{2}$ for a sufficient large $R>>0$. Let $\phi^{-1}_{1}(u(s,t))=(\theta_{1}(s,t),z_{1}(s,t))$ and $\phi^{-1}_{2}(u(s,t))=(\theta_{2}(s,t),z_{2}(s,t))$. Consider a smooth function $\kappa:\mathbb{R}\to [0,1]$ such that $\kappa(s)=0$ for $|s|<R+\frac{1}{2}$ and $\kappa(s)=1$ for $|s|>R+1$. For $d=(d_{1},d_{2}),c=(c_{1},c_{2})\in \mathbb{R}^{2}$,  define $\Tilde{h}_{(c,d)}$  as $\Tilde{h}_{(c,d)}=h$ on $[-R,R]\times S^{1}$ and 
\begin{equation}
\Tilde{h}_{(c,d)}(s,t)=(a(s,t)+\kappa(s)d_{i},\phi_{i}(\theta_{i}(s,t)+\kappa(s)c_{i},z_{i}(s,t)))
\end{equation}
on $\mathbb{R}\backslash [-R,R]\times S^{1}$

\begin{dfn}\cite[cf. Definition 6]{Dr}
    Fix small $\epsilon>0$ so that $2\epsilon$ is smaller than the injective radius with respect to $g_{J}$. We define
    \begin{equation}
        \mathcal{B}^{1,q}_{\delta}:=\{\mathrm{exp}_{\Tilde{h}_{(c,d)}}\circ \Tilde{\gamma}|\,\,\Tilde{\gamma}\in W^{1,q}_{\delta}(\Tilde{h}_{(c,d)}^{*}T(\mathbb{R}\times Y)),\,\, (c,d) \in \mathbb{R}^{4},\,\, |\Tilde{\gamma}|_{C^{0}}<\epsilon,\,\, |c_{i}|,\,|d_{i}|<\epsilon,\,i=1,2 \}
    \end{equation}
    where $\Tilde{h}\in \mathcal{B}^{\infty}_{\delta}$.
\end{dfn}

\begin{them}
    $\mathcal{B}^{1,q}_{\delta}$ is endowed with the differentiable structure of an infinite-dimensional, separable Banach manifold.
\end{them}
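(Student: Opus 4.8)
The plan is to equip $\mathcal{B}^{1,q}_{\delta}$ with an explicit atlas whose charts are modeled on the Banach space $W^{1,q}_{\delta}(\Tilde{h}_{(c,d)}^{*}T(\mathbb{R}\times Y)) \oplus \mathbb{R}^{4}$, and then to verify that the transition maps between overlapping charts are smooth. For each $\Tilde{h}\in \mathcal{B}^{\infty}_{\delta}$ and each small $(c,d)\in \mathbb{R}^{4}$, the assignment
\[
\Phi_{\Tilde{h},(c,d)}:\ \Tilde{\gamma} \longmapsto \exp_{\Tilde{h}_{(c,d)}}\circ\, \Tilde{\gamma},
\]
defined on the $\epsilon$-ball of $W^{1,q}_{\delta}(\Tilde{h}_{(c,d)}^{*}T(\mathbb{R}\times Y))$, together with the record of the parameters $(c,d)$, gives a local parametrization. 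By the very definition of $\mathcal{B}^{1,q}_{\delta}$ these charts cover the whole space, so what remains is to check that each model space is an infinite-dimensional separable Banach space and that the overlaps are smooth.

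First I would settle the functional analysis of the model. The weighted space $W^{1,q}_{\delta}$ is complete for its weighted norm, hence Banach and plainly infinite-dimensional; since $2<q<\infty$ and the weight $e^{\delta|s|}$ is a fixed smooth positive function, separability of the unweighted $W^{1,q}(\mathbb{R}\times S^{1})$ transports across the isometry $\Tilde{\gamma}\mapsto e^{\delta|s|}\Tilde{\gamma}$, so $W^{1,q}_{\delta}$ is separable. Adjoining the finite-dimensional factor $\mathbb{R}^{4}$ carrying the asymptotic shift parameters keeps the model a separable Banach space. At this stage I also fix, for base points $\Tilde{h},\Tilde{h}'$ that are $C^{0}$-close, an identification of the fibers $\Tilde{h}_{(c,d)}^{*}T(\mathbb{R}\times Y)$ and $\Tilde{h}'^{\,*}_{(c',d')}T(\mathbb{R}\times Y)$ by parallel transport along the short $g_{J}$-geodesic joining them pointwise; because $2\epsilon$ lies below the injectivity radius this is well defined, and it intertwines the two weighted Sobolev spaces.

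The substantive step is the smoothness of the transition maps. Given an element $\exp_{\Tilde{h}_{(c,d)}}\Tilde{\gamma}=\exp_{\Tilde{h}'_{(c',d')}}\Tilde{\gamma}'$ in the overlap, I must solve for $(\Tilde{\gamma}',c',d')$ as a smooth function of $(\Tilde{\gamma},c,d)$. The shift parameters are recovered from the asymptotic data of the common image through the fixed cutoff $\kappa$, and this dependence is smooth because the relevant asymptotic constants depend linearly on the exponentially decaying tails. The residual map $\Tilde{\gamma}\mapsto \Tilde{\gamma}'$ is then built from the smooth target transition $\Psi=\exp_{\Tilde{h}'_{(c',d')}}^{-1}\circ\exp_{\Tilde{h}_{(c,d)}}$ followed by the parallel transport above, i.e.\ it is a Nemytskii (composition) operator determined by the smooth fiber-preserving map $\Psi$. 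The decisive analytic input is the Sobolev embedding $W^{1,q}(\mathbb{R}\times S^{1})\hookrightarrow C^{0}$, valid since $q>2$ and the domain is two-dimensional; this forces elements of $W^{1,q}_{\delta}$ to take values in a compact coordinate neighborhood and lets the classical $\Omega$-lemma apply, so that composition with $\Psi$ is smooth between the weighted Sobolev balls.

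I expect this last verification to be the main obstacle: one must check that the nonlinear composition operators respect the exponential weights and depend smoothly on the shift parameters $(c,d)$, rather than merely on compactly supported perturbations, so that the genuinely decaying part stays in $W^{1,q}_{\delta}$ while the asymptotic constants remain confined to the finite-dimensional factor. All of the required estimates are, however, precisely those established in \cite{Dr}, and the argument is a direct adaptation of Dragnev's construction to the present asymptotic orbits $\gamma_{1},-\gamma_{2}$; once the transitions are shown smooth, the charts assemble into the asserted separable Banach manifold structure.
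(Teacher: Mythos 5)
Your outline is correct and coincides with the route the paper itself relies on: the paper states this theorem without proof, importing it directly from Dragnev's construction in \cite{Dr}, and your atlas of exponential charts modeled on $W^{1,q}_{\delta}\oplus\mathbb{R}^{4}$, with smoothness of transitions via the embedding $W^{1,q}(\mathbb{R}\times S^{1})\hookrightarrow C^{0}$ for $q>2$ and the $\Omega$-lemma, is exactly that argument adapted to the asymptotics at $\gamma_{1}$ and $-\gamma_{2}$. The only cosmetic point is that multiplication by the weight $e^{\delta|s|}$ is a Banach space isomorphism rather than an isometry of $W^{1,q}$ (the derivative of the weight contributes a bounded extra term), but separability transports all the same.
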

For a map $\Tilde{h}\in \mathcal{B}^{1,q}_{\delta}$, let 
\begin{equation}
    U=\{ (\Tilde{\gamma},(c,d)) \in  W^{1,q}_{\delta}(\Tilde{h}^{*}T(\mathbb{R}\times Y))\times \mathbb{R}^{4}|\,\,|\Tilde{\gamma}|_{C^{0}}<\epsilon,\,\, |c_{i}|,\,|d_{i}|<\epsilon,\,i=1,2 \}.
\end{equation}
Then by the construction, we can describe a local chart around $\Tilde{h}\in \mathcal{B}^{1,q}_{\delta}$ as 
\begin{equation}\label{localchart}
    E_{\Tilde{h}}:U\to \{\mathrm{exp}_{\Tilde{h}_{(c,d)}}\circ \Pi_{(c,d)}\Tilde{\gamma}|\,\,(\Tilde{\gamma},(c,d))\in U \}\subset \mathcal{B}^{1,q}_{\delta}
\end{equation}
where $E_{\Tilde{h}}(\Tilde{\gamma},(c,d))=\mathrm{exp}_{\Tilde{h}_{(c,d)}}\circ \Pi_{(c,d)}\Tilde{\gamma}$ and $\Pi_{(c,d)}:\Tilde{h}^{*}T(\mathbb{R}\times Y)\to \Tilde{h}_{(c,d)}^{*}T(\mathbb{R}\times Y)$  is the parallel transport along the shortest geodesic from a point of $\Tilde{h}$ to a point of $\Tilde{h}_{(c,d)}$. This implies that there is a natural identification
\begin{equation}
    T_{\Tilde{h}}\mathcal{B}^{1,q}_{\delta} \cong W^{1,q}_{\delta}(\Tilde{h}^{*}T(\mathbb{R}\times Y))\oplus
\mathbb{R}^{4}
\end{equation}
where $T_{\Tilde{h}}\mathcal{B}^{1,q}_{\delta}$ is the tangent space at $\Tilde{h}\in \mathcal{B}^{1,q}_{\delta}$.

Next, we consider the tangent space at $\Tilde{h}\in P^{J}$ as a subspace of $T_{\Tilde{h}}\mathcal{B}^{1,q}_{\delta}$.

From the standard argument, $T_{\Tilde{h}}P^{J}$ can be identified with the kernel of
\begin{equation}
    F_{\Tilde{h}}:T_{\Tilde{h}}\mathcal{B}^{1,q}_{\delta} \cong W^{1,q}_{\delta}(\Tilde{h}^{*}T(\mathbb{R}\times Y))\oplus
\mathbb{R}^{4}\to L_{\delta}^{q}(\wedge^{0,1}T^{*}(\mathbb{R}\times S^{1})\otimes \Tilde{h}^{*}T(\mathbb{R}\times Y)).
\end{equation}
Here, $F_{\Tilde{h}}$ is the linearization of the Cauchy-Riemann operator at $\Tilde{h}$ and described as follows.

Define 
\begin{equation}
    P_{\Tilde{h}}(\Tilde{\gamma},(c,d))=\Pi_{(c,d)}\circ \Phi_{\Tilde{h}}^{(c,d)}(\Pi_{(c,d)}\Tilde{\gamma})^{-1}\circ \Bar{\partial}_{J}\mathrm{exp}_{\Tilde{h}_{(c,d)}}\Pi_{(c,d)}\Tilde{\gamma}
\end{equation}
where $\Phi_{\Tilde{h}}^{(c,d)}(\Tilde{\xi}):T_{\Tilde{h}_{(c,d)}}(\mathbb{R}\times Y)\to T_{\mathrm{exp}_{\Tilde{h}_{(c,d)}}(\Tilde{\xi})}(\mathbb{R}\times Y)$ is the parallel transport for $\Tilde{\xi}\in T_{\Tilde{h}_{(c,d)}}(\mathbb{R}\times Y)$.

Then $F_{\Tilde{h}}$ is given by
\begin{equation}
    F_{\Tilde{h}}(\Tilde{\gamma},(c,d))=\frac{d}{d\lambda}P_{\Tilde{h}}(\lambda\Tilde{\gamma},\lambda(c,d))|_{\lambda=0}.
\end{equation}

Moreover, there is a natural identification
\begin{equation}
    T_{\Tilde{h}}P^{J}\cong \mathrm{Ker}F_{\Tilde{h}}
\end{equation}

From this construction, it can be described as
\begin{equation}
    F_{\Tilde{h}}(\Tilde{\gamma},(c,d))=D_{\Tilde{h}}(\Tilde{\gamma})+K(c,d)
\end{equation}
where $D_{\Tilde{h}}:W^{1,q}_{\delta}(\Tilde{h}^{*}T(\mathbb{R}\times Y))\to L_{\delta}^{q}(\wedge^{0,1}T^{*}(\mathbb{R}\times S^{1})\otimes_{J}\Tilde{h}^{*}T(\mathbb{R}\times Y))$ and $K:
\mathbb{R}^{4}\to L_{\delta}^{q}(\wedge^{0,1}T^{*}(\mathbb{R}\times S^{1})\otimes_{J} \Tilde{h}^{*}T(\mathbb{R}\times Y))$.

Note that $F_{\Tilde{h}}$ and $D_{\Tilde{h}}$ are Fredholm operators and we fix a generic $J$ so that $F_{\Tilde{h}}$ is surjective for any $\Tilde{h}\in P^{J}$. 

\begin{prp}\label{fredind}\cite[Theorem 9]{Dr}
    \begin{equation}
        \mathrm{Ind}F_{\Tilde{h}}=\mathrm{Ind}D_{\Tilde{h}}+4=4.
    \end{equation}

    In particular, $\mathrm{Ind}D_{\Tilde{h}}=0$.
\end{prp}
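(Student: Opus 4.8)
The plan is to split the statement into two parts: the algebraic identity $\mathrm{Ind}F_{\Tilde{h}}=\mathrm{Ind}D_{\Tilde{h}}+4$, which is essentially formal, and the computation $\mathrm{Ind}D_{\Tilde{h}}=0$, which is a Riemann--Roch index calculation.

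First I would prove $\mathrm{Ind}F_{\Tilde{h}}=\mathrm{Ind}D_{\Tilde{h}}+4$ using the decomposition $F_{\Tilde{h}}(\Tilde{\gamma},(c,d))=D_{\Tilde{h}}(\Tilde{\gamma})+K(c,d)$ recorded above. Here $D_{\Tilde{h}}$ is a Fredholm operator on $W^{1,q}_{\delta}(\Tilde{h}^{*}T(\mathbb{R}\times Y))$ and $F_{\Tilde{h}}$ is obtained by enlarging the domain by the finite--dimensional space $\mathbb{R}^{4}$ of asymptotic parameters $(c,d)=(c_{1},c_{2},d_{1},d_{2})$, keeping the same target $L_{\delta}^{q}$. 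The underlying fact is that extending the domain of a Fredholm operator by a finite--dimensional space while keeping the codomain fixed raises the index by exactly the dimension of that space. Concretely, writing $\bar{K}:\mathbb{R}^{4}\to \mathrm{coker}\,D_{\Tilde{h}}$ for the map induced by $K$ and setting $r=\dim\{(c,d)\mid K(c,d)\in \mathrm{Im}\,D_{\Tilde{h}}\}$, one finds $\dim\ker F_{\Tilde{h}}=\dim\ker D_{\Tilde{h}}+r$ and $\dim\mathrm{coker}\,F_{\Tilde{h}}=\dim\mathrm{coker}\,D_{\Tilde{h}}-(4-r)$, so that the terms involving $r$ cancel and $\mathrm{Ind}F_{\Tilde{h}}=\mathrm{Ind}D_{\Tilde{h}}+4$. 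Geometrically the four parameters reintroduce the constant asymptotic modes --- an $\mathbb{R}$--shift of the symplectization coordinate and an $S^{1}$--shift of the angular coordinate $\theta_{i}$ at each of the two ends --- that the strictly positive weight $\delta$ removes from $W^{1,q}_{\delta}$.

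Next I would compute $\mathrm{Ind}D_{\Tilde{h}}$ by the Riemann--Roch formula for the weighted Cauchy--Riemann operator on the punctured cylinder, whose terms are of the same shape as the Fredholm index $\mathrm{ind}(u)$ recalled in Section 2, namely $\mathrm{Ind}D_{\Tilde{h}}=-\chi(\dot{\Sigma})+2c_{1}(\xi|_{[\Tilde{h}]},\tau)+\mu_{\tau}(\gamma_{1})-\mu_{\tau}(\gamma_{2})$. The domain is a twice--punctured sphere, so $\chi(\dot{\Sigma})=0$; choosing $\tau=\tau_{0}$ to be the global trivialization of $\xi_{\mathrm{std}}$ makes the relative first Chern number $c_{1}(\xi|_{[\Tilde{h}]},\tau_{0})$ vanish because $\tau_{0}$ extends over the whole curve; and by the hypotheses inherited from Lemma \ref{Umap} we have $\mu(\gamma_{1})=\mu(\gamma_{2})=1$. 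Substituting gives $\mathrm{Ind}D_{\Tilde{h}}=-0+2\cdot 0+1-1=0$, and combining with the first part yields $\mathrm{Ind}F_{\Tilde{h}}=0+4=4$.

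The main obstacle is the second step: one must verify that $D_{\Tilde{h}}$ is genuinely Fredholm on the weighted space and that its index is given by the stated formula with the correct bookkeeping of the weight $\delta$ at the two nondegenerate ends. Concretely, $\delta$ must be chosen smaller than the spectral gaps of the asymptotic operators at $\gamma_{1}$ and $\gamma_{2}$, so that the perturbed asymptotic operators carry the same winding and Conley--Zehnder data as $\gamma_{1},\gamma_{2}$ and the formula reduces to the expression above; this is precisely the analytic content of \cite[Theorem 9]{Dr}, which I would cite for the details. By contrast the first step is purely linear algebra, and the only geometric inputs are the vanishing $c_{1}(\xi|_{[\Tilde{h}]},\tau_{0})=0$ forced by the global trivialization and the index values $\mu(\gamma_{1})=\mu(\gamma_{2})=1$.
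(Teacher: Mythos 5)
Your first step is sound: writing $F_{\Tilde{h}}(\Tilde{\gamma},(c,d))=D_{\Tilde{h}}(\Tilde{\gamma})+K(c,d)$ with $K$ of finite rank, enlarging the domain by $\mathbb{R}^{4}$ raises the index by exactly $4$, and your kernel/cokernel count is the standard argument. (The paper gives no proof here at all---it simply cites \cite[Theorem 9]{Dr}---so the real question is whether your reconstruction of the index computation is correct.) The problem is the second step. Both punctures of the cylinder are \emph{positive} ends: the paper notes explicitly that $a(s,t)\to+\infty$ as $s\to\pm\infty$, and $\Tilde{h}$ represents a curve of $\mathcal{M}^{J}(\alpha,\emptyset)$ with $\alpha=(\gamma_{1},1)\cup(\gamma_{2},1)$, so both orbits sit at the top. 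The formula ``of the same shape as $\mathrm{ind}(u)$'' therefore reads $-\chi+2c_{1}+\mu(\gamma_{1})+\mu(\gamma_{2})=0+0+1+1=2$, not $\mu(\gamma_{1})-\mu(\gamma_{2})=0$; the value $2$ is precisely the $\mathrm{ind}(u_{1})=2$ of Proposition \ref{indexproperties}(4) used in Lemma \ref{Umap}, and it equals $\dim\mathcal{M}^{J}(\alpha,\emptyset)$. Your expression returns $0$ only because $\mu(\gamma_{1})=\mu(\gamma_{2})$ makes the sign error invisible in the final number.

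Moreover, even with the signs corrected, that formula computes $\mathrm{ind}(u)=2$ and not $\mathrm{Ind}D_{\Tilde{h}}$: feeding it into your first step would give $\mathrm{Ind}F_{\Tilde{h}}=2+4=6$, contradicting the statement and the fact that $\dim P^{J}=\dim\mathcal{M}^{J}(\alpha,\emptyset)+\dim\mathrm{Aut}(\mathbb{R}\times S^{1},j_{0})=2+2=4$. The operator $D_{\Tilde{h}}$ acts on $W^{1,q}_{\delta}$-sections of the \emph{full} pullback $\Tilde{h}^{*}T(\mathbb{R}\times Y)$, which splits asymptotically as $\Tilde{h}^{*}\xi$ plus the trivial complex line spanned by the symplectization direction and $X_{\lambda}$. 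On that trivial line the asymptotic operator is degenerate---its kernel consists exactly of the constant modes parametrized by $(c_{i},d_{i})$---and the exponential weight $\delta>0$ perturbs it so that each of the two positive ends contributes $-1$ rather than $+1$ to the index. Hence $\mathrm{Ind}D_{\Tilde{h}}=\mathrm{ind}(u)-2=0$ and $\mathrm{Ind}F_{\Tilde{h}}=0+4=4$. This bookkeeping of the weight on the degenerate trivial factor is the actual content of \cite[Theorem 9]{Dr}, and it is not captured by your remark that $\delta$ need only be smaller than the spectral gaps at $\gamma_{1},\gamma_{2}$: the relevant degeneracy lives in the trivial factor, not in $\xi$.
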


From Proposition \ref{fredind}, we can see that $\mathrm{pr}:T_{\Tilde{h}}P^{J}\cong \mathrm{Ker}F_{\Tilde{h}}\to \mathbb{R}^{4}$ is an isomorphism where $\mathrm{pr}$ is 
the restriction of the natural projection $\mathrm{pr}_{2}:T_{\Tilde{h}}\mathcal{B}^{1,q}_{\delta} \cong W^{1,q}_{\delta}(\Tilde{h}^{*}T(\mathbb{R}\times Y))\oplus
\mathbb{R}^{4}\to \mathbb{R}^{4}$ to $T_{\Tilde{h}}P^{J}\cong \mathrm{Ker}F_{\Tilde{h}}$.

Recall the moduli space $\mathcal{M}^{J}(\alpha,\emptyset)/\mathbb{R}\cong S^{1}$. This is a quotient space of $P^{J}$. Consider the projection $\pi:P^{J}\to  \mathcal{M}^{J}(\alpha,\emptyset)/\mathbb{R}\cong S^{1}$.  For $\Tilde{h}=(a,h)\in P^{J}$, 

\begin{equation}\label{fiber}
    \pi(\pi(\Tilde{h}))^{-1}=\{(a(s+d,t+c)+e,h(s+d,t+c))\in P^{J}|\,(c,d,e)\in S^{1}\times \mathbb{R}\times \mathbb{R}\,\}
\end{equation}

So $\mathcal{M}^{J}(\alpha,\emptyset)/\mathbb{R}\cong S^{1}$ is a fiber bundle whose fiber is isomorphic to $S^{1}\times \mathbb{R}\times \mathbb{R}$.

Take a section  $\Tilde{v}:\mathcal{M}^{J}(\alpha,\emptyset)/\mathbb{R}\cong S^{1} \to P^{J} $. 
For $\tau \in S^{1}$, write $\Tilde{v}(\tau)=(a_{\tau},v_{\tau})\in P^{J}$.
Let $p_{1}$, $p_{2}: S^{1}\to S^{1}$ denote the functions defined by
\begin{equation}
    \lim_{s\to +\infty}v_{\tau}(s,t)=\gamma_{1}(T_{1}(t+p_{1}(\tau))
\end{equation}
\begin{equation}
    \lim_{s\to -\infty}v_{\tau}(s,t)=\gamma_{2}(-T_{2}(t+p_{2}(\tau)))
\end{equation}

\begin{prp}\label{diffdouble}
    $p_{1}$, $p_{2}: S^{1}\to S^{1}$ are smooth functions such that $\iota=p_{1}-p_{2}:S^{1}\to S^{1}$ is a locally diffeomorphism.
\end{prp}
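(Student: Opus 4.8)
The plan is to realize $p_1$ and $p_2$ as the asymptotic phase constants of the curves $\tilde v(\tau)$ at the two ends, check that they depend smoothly on $\tau$, and then show that $\iota$ is an immersion; since a smooth immersion $S^1\to S^1$ is automatically a local diffeomorphism, this is enough. For smoothness, I would use the construction of the chart $E_{\tilde h}$ in \eqref{localchart}: the asymptotic phase at $+\infty$ (resp.\ $-\infty$) of a curve in $\mathcal B^{1,q}_\delta$ is recorded by the coordinate $c_1$ (resp.\ $-c_2$, the sign coming from the convention $\epsilon_2=-1$ in the definition of $(\delta,1,q)$--convergence, so that $p_2=-c_2$). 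Hence the two maps $P_1,P_2\colon P^J\to S^1$ sending a curve to its asymptotic phases are smooth, and composing with the smooth section $\tilde v$ shows that $p_1=P_1\circ\tilde v$, $p_2=P_2\circ\tilde v$, and therefore $\iota$, are smooth.

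To compute $d\iota$ I would use the identification $T_{\tilde h}P^J\cong\ker F_{\tilde h}$ together with the isomorphism $\mathrm{pr}\colon\ker F_{\tilde h}\xrightarrow{\ \sim\ }\mathbb R^4$ coming from Proposition \ref{fredind}. Along a tangent vector $(\tilde\gamma,(c,d))\in\ker F_{\tilde h}$ the component $\tilde\gamma$ lies in the weighted space $W^{1,q}_\delta$, so it decays exponentially at both ends (cf.\ Proposition \ref{asymptics}) and makes no contribution to the asymptotic phases; only the modification $\tilde h_{(c,d)}$ contributes, shifting the phase at $+\infty$ by $c_1$ and at $-\infty$ by $-c_2$. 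Thus $dp_1=c_1$ and $dp_2=-c_2$ as functionals on $\mathbb R^4=\{(c_1,c_2,d_1,d_2)\}$, whence
\begin{equation*}
d\iota=dp_1-dp_2=c_1+c_2 .
\end{equation*}

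It then remains to identify the kernel of this functional with the tangent space to the fibre of $\pi$ and to invoke transversality of the section. The fibre through $\tilde h$, described in \eqref{fiber}, is the orbit of the three–parameter group of domain rotations $\partial_t$, domain translations $\partial_s$, and target translations $\partial_e$; differentiating the asymptotic formulas gives
\begin{equation*}
\mathrm{pr}(\partial_t)=(1,-1,0,0),\qquad \mathrm{pr}(\partial_s)=(0,0,T_1,-T_2),\qquad \mathrm{pr}(\partial_e)=(0,0,1,1).
\end{equation*}
Since $T_1,T_2>0$ these three vectors are independent and span the hyperplane $W=\ker(c_1+c_2)\subset\mathbb R^4$, which is precisely the image under $\mathrm{pr}$ of the tangent space to the fibre of $\pi$. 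Because $\mathrm{pr}$ is an isomorphism and $\tilde v$ is a section of $\pi$, the velocity $\tilde v'(\tau)$ is transverse to the fibre, i.e.\ $\mathrm{pr}(\tilde v'(\tau))\notin W$; therefore $\iota'(\tau)=(c_1+c_2)\bigl(\mathrm{pr}(\tilde v'(\tau))\bigr)\neq0$, so $\iota$ is an immersion and hence a local diffeomorphism.

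I expect the main difficulty to lie in the analytic step of the second paragraph: justifying rigorously that the $W^{1,q}_\delta$–component of a tangent vector makes no contribution to the asymptotic phase and that the chart coordinates $(c_1,c_2)$ genuinely record those phases, which rests on the exponential asymptotics of Proposition \ref{asymptics} and on careful bookkeeping of the $\epsilon_2=-1$ convention (so that $p_2=-c_2$). Once $dp_1=c_1$ and $dp_2=-c_2$ are secured, the remaining identification of $W$ with $\ker(c_1+c_2)$ and the transversality argument are routine linear algebra given Proposition \ref{fredind}.
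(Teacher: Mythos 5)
Your proposal is correct and follows essentially the same route as the paper: identify the asymptotic phases with the chart coordinates $(c_1,c_2)$ of $\mathcal B^{1,q}_\delta$, compute $\mathrm{pr}$ of the three fibre directions of $\pi$, observe that they span the kernel of the functional $dp_1-dp_2$ in $\mathbb R^4$, and conclude $\iota'\neq0$ from transversality of the section and the isomorphism $\mathrm{pr}$ of Proposition \ref{fredind}. The only discrepancy is a harmless sign convention (you take $p_2=-c_2$ where the paper's proof takes $p_2=c_2$, so your fibre vector $(1,-1,0,0)$ and functional $c_1+c_2$ correspond to the paper's $(1,1,0,0)$ and $c_1-c_2$), which does not affect the conclusion.
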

\begin{proof}[\bf Proof of Proposition \ref{diffdouble}]

In order to prove Proposition \ref{diffdouble}, we have to observe the tangent space along the fiber.

Fix $\tau_{0}\in S^{1}$. Take $e_{i}\in S^{1}$ and $f_{i}\in \mathbb{R}$ so that
\begin{equation}
    \lim_{\epsilon_{i}s\to +\infty}v_{\tau_{0}}(s,t)=\gamma_{i}(\epsilon_{i}T_{i}(t+e_{i})),\,\,  \lim_{\epsilon_{i}s\to +\infty}a_{\tau_{0}}(s,t)-\epsilon_{i}T_{i}s=f_{i}.
\end{equation}

Let $\Tilde{w}:(-\epsilon,\epsilon)\to P^{J}$ be a smooth path with $\Tilde{w}(0)=\Tilde{v}(\tau_{0})$ where $\epsilon>0$ is sufficiently small. Write $\Tilde{w}(\lambda)=(b_{\lambda},w_{\lambda})\in P^{J}$ and let $c_{i}:(-\epsilon,\epsilon)\to S^{1}$, $d_{i}:(-\epsilon,\epsilon)\to \mathbb{R}$ $i=1,2$ denote the functions defined by
\begin{equation}
    \lim_{\epsilon_{i}s\to +\infty}w_{\lambda}(s,t)=\gamma_{i}(\epsilon_{i}T_{i}(t+c_{i}(\lambda))),\,\,  \lim_{\epsilon_{i}s\to +\infty}b_{\lambda}(s,t)-\epsilon_{i}T_{i}s=d_{i}(\lambda).
\end{equation}

From the construction we can easily check that
\begin{equation}
    \mathrm{pr}_{2}\circ E^{-1}_{\Tilde{v}(\tau_{0})}(\Tilde{w}(\lambda))=(c_{1}(\lambda)-e_{1},c_{2}(\lambda)-e_{2},d_{1}(\lambda)-f_{1},d_{2}(\lambda)-f_{2})
\end{equation}
where $E_{\Tilde{v}_{\tau_{0}}}$ is the map defined by (\ref{localchart}).

This implies that for $\Tilde{w}'(0)\in T_{\Tilde{v}(\tau_{0})}P^{J}\cong \mathrm{Ker}F_{\Tilde{v}(\tau_{0})}$,
\begin{equation}
    \mathrm{pr}(\Tilde{w}'(0))=(c'_{1}(0),c'_{2}(0),d'_{1}(0),d'_{2}(0)).
\end{equation}

Under these understandings, we observe the tangent space along
the fiber at $\Tilde{v}(\tau_{0})$. For $i=1,2,3$, define $\Tilde{w}_{i}:(-\epsilon,\epsilon)\to P^{J}$ as 
\begin{equation}
    \Tilde{w}_{1}(\lambda)=(a_{\tau_{0}}(s,t)+\lambda,v_{\tau_{0}}(s,t)),
\end{equation}
\begin{equation}
    \Tilde{w}_{2}(\lambda)=(a_{\tau_{0}}(s+\lambda,t),v_{\tau_{0}}(s+\lambda,t)),
\end{equation}
\begin{equation}
    \Tilde{w}_{3}(\lambda)=(a_{\tau_{0}}(s,t+\lambda),v_{\tau_{0}}(s,t+\lambda)).
\end{equation}

Since (\ref{fiber}), the tangent space along
the fiber at $\Tilde{v}(\tau_{0})$ is spaned by $\Tilde{w}_{1}'(0)$, $\Tilde{w}_{2}'(0)$ and $\Tilde{w}_{3}'(0)$.
From the definition, we have
\begin{equation}
    \mathrm{pr}(\Tilde{w}_{1}'(0))=(0,0,1,1),\,\,\mathrm{pr}(\Tilde{w}_{2}'(0))=(0,0,T_{1},-T_{2}),\,\,\mathrm{pr}(\Tilde{w}_{3}'(0))=(1,1,0,0).
\end{equation}

Since $\Tilde{v}:\mathcal{M}^{J}(\alpha,\emptyset)/\mathbb{R}\cong S^{1} \to P^{J} $ is a section, $\Tilde{v}'(\tau_{0})$, $\Tilde{w}_{1}'(0)$, $\Tilde{w}_{2}'(0)$ and $\Tilde{w}_{3}'(0)$ span $T_{\Tilde{v}(\tau_{0})}P^{J}\cong \mathrm{Ker}F_{\Tilde{v}(\tau_{0})}$. 
From the definition, the first two coordinate of  $\mathrm{pr}(\Tilde{v}'(\tau_{0}))$ is $(p_{1}'(\tau_{0}),p_{2}'(\tau_{0}))$. Since $\mathrm{pr}:T_{\Tilde{h}}P^{J}\cong \mathrm{Ker}F_{\Tilde{h}}\to \mathbb{R}^{4}$ is an isomorphism, $\mathrm{pr}(\Tilde{w}_{1}'(0))$, $\mathrm{pr}(\Tilde{w}_{2}'(0))$, $\mathrm{pr}(\Tilde{w}_{3}'(0))$ and  $\mathrm{pr}(\Tilde{v}'(\tau_{0}))$ span $\mathbb{R}^{4}$ and so we have $p_{1}'(\tau_{0})-p_{2}'(\tau_{0}) \neq 0$. This implies that $\iota=p_{1}-p_{2}:S^{1}\to S^{1}$ is a local diffeomorphism and hence we complete the proof.
\end{proof}

\begin{prp}\label{doublecove}
    $\iota=p_{1}-p_{2}:S^{1} \to S^{1}$ is  a p-fold cover.
\end{prp}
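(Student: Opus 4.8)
The plan is to recognize the family $\{\tilde v_\tau\}$ as an (integral parametrization of a) rational open book and to read off $\deg\iota$ from the fundamental group of $L(p,p-1)$, exactly in the spirit of the $\pi_1$-computation at the end of the proof of Lemma \ref{Umap}. By Proposition \ref{diffdouble}, $\iota=p_1-p_2$ is a local diffeomorphism of the compact connected $1$-manifold $S^1$, hence a covering map; since the number of sheets equals $|\deg\iota|=|\deg p_1-\deg p_2|$ (degree is additive for circle maps), it remains only to prove $|\deg\iota|=p$.

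First I would set up a global parametrization. Consider $\Phi:\mathbb{R}\times S^1\times S^1\to L(p,p-1)$, $\Phi(s,t,\tau)=v_\tau(s,t)$. By Lemma \ref{Umap} the pages $\pi(u_\tau)$ are embedded, mutually disjoint, and foliate $L(p,p-1)\setminus(\gamma_1\cup\gamma_2)$ (the open book produced through \cite[Proposition 3.2, Proposition 3.3]{CHP}), and each page is itself embedded. Hence $\Phi$ is a smooth injective immersion onto $L(p,p-1)\setminus(\gamma_1\cup\gamma_2)$, so a diffeomorphism onto the complement of the binding. Writing $N_i$ for a (solid-torus) tubular neighborhood of the simple orbit $\gamma_i$, the region $W:=\Phi([-R,R]\times S^1\times S^1)$ is for $R$ large diffeomorphic to $T^2\times[-R,R]$, so $H_1(W)\cong\mathbb{Z}[t]\oplus\mathbb{Z}[\tau]$, and $L(p,p-1)=W\cup N_1\cup N_2$.

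Next I would compute the meridians. As $s\to+\infty$ one has $\Phi(s,t,\tau)\to\gamma_1(T_1(t+p_1(\tau)))$ by Proposition \ref{asymptics}, so the boundary torus $\partial N_1=\Phi(\{R\}\times S^1\times S^1)$ collapses onto $\gamma_1$ by $(t,\tau)\mapsto t+p_1(\tau)$. The meridian $\mu_1$ (bounding a disk in $N_1$) is the fiber of this collapse, whose class in $H_1(\partial N_1)=\mathbb{Z}[t]\oplus\mathbb{Z}[\tau]$ is $\mu_1=(\deg p_1)[t]-[\tau]$, because the collapse sends $[t]\mapsto 1$ and $[\tau]\mapsto\deg p_1$. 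Likewise, from $\Phi(s,t,\tau)\to\gamma_2(-T_2(t+p_2(\tau)))$ as $s\to-\infty$, the meridian of $N_2$ is $\mu_2=-(\deg p_2)[t]+[\tau]$ (the overall signs depending on the chosen orientations and being immaterial below).

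Finally, van Kampen's theorem (gluing the two solid tori kills $\mu_1$ and $\mu_2$) gives
\begin{equation*}
\pi_1(L(p,p-1))=H_1(L(p,p-1))\cong\mathbb{Z}^2/\langle\mu_1,\mu_2\rangle,
\end{equation*}
whose order is
\begin{equation*}
\left|\det\begin{pmatrix}\deg p_1 & -1\\ -\deg p_2 & 1\end{pmatrix}\right|=|\deg p_1-\deg p_2|=|\deg\iota|.
\end{equation*}
Since $H_1(L(p,p-1))\cong\mathbb{Z}/p$, we conclude $|\deg\iota|=p$, so $\iota$ is a $p$-fold cover. The step I expect to be most delicate is the identification of the meridian classes $\mu_i$: one must verify that the asymptotic collapse of $\partial N_i$ onto $\gamma_i$ provided by Proposition \ref{asymptics} genuinely exhibits $N_i$ as a standard solid torus with the stated meridian, and track the orientations at the two ends so that the two relations combine to the single generator $(\deg\iota)[t]$ of the torsion group.
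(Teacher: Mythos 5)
Your proposal is correct and follows essentially the same route as the paper: both decompose $L(p,p-1)$ into solid-torus neighborhoods of $\gamma_{1}$ and $\gamma_{2}$ glued across the region swept out by the cylinders $v_{\tau}$, read off the gluing/meridian data from the asymptotics $(t,\tau)\mapsto t+p_{i}(\tau)$, and then force $|\deg\iota|=p$ from $H_{1}(L(p,p-1))\cong\mathbb{Z}/p\mathbb{Z}$. The paper phrases step three via the pushforward of the $\tau$-circle under the Heegaard gluing map rather than via your determinant of meridian classes, but this is only a cosmetic difference.
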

\begin{proof}[\bf Proof of Proposition \ref{doublecove}]
By considering a composition function of $t \to t-p_{2}(\tau)$ with $\Tilde{v}(\tau)$, we may assume that for $\Tilde{v}(\tau)=(a_{\tau},v_{\tau})\in P^{J}$,
\begin{equation}\label{plimit}
    \lim_{s\to +\infty}v_{\tau}(s,t)=\gamma_{1}(T_{1}(t+p(\tau)))
\end{equation}
\begin{equation}
    \lim_{s\to -\infty}v_{\tau}(s,t)=\gamma_{2}(-T_{2}t)
\end{equation}
Moreover by considering $\tau \to -\tau$, we may assume that $\iota:S^{1}\to S^{1}$ is an orientation preserving map.

For $R>>0$ Consider two solid torus $\bigcup_{\tau\in S^{1}}v_{\tau}((-\infty,-R]\times S^{1})\cup \mathrm{Im}\gamma_{2}$ and $\bigcup_{\tau\in S^{1}}v_{\tau}([R,+\infty)\times S^{1})\cup \mathrm{Im}\gamma_{1}$ whose boundaries have coordinates 
\begin{equation}
S^{1}\times S^{1} \ni (\tau,t) \mapsto v_{\tau}(-R,t),\, v_{\tau}(R,t).
\end{equation}
From the construction, we can see that the map on the boundaries
\begin{equation}
    \psi:\bigcup_{\tau\in S^{1}}v_{\tau}(\{-R\}\times S^{1})\to  \bigcup_{\tau\in S^{1}}v_{\tau}(\{R\}\times S^{1}),\,\,v_{\tau}(-R,t) \mapsto v_{\tau}(R,t)
\end{equation}
 is a diffeomorphism and  the lens space constructed by gluing the two solid torus by this map is diffeomorphic to $L(p,p-1)$. This implies that for fixed $\tau_{0} \in S^{1}$, $\psi_{*}([\bigcup_{\tau\in S^{1}}v_{\tau}(-R,0)])=p [v_{\tau_{0}}(\{-R\}\times S^{1})]$ where 
 \begin{equation}
     \psi_{*}:H_{1}(\bigcup_{\tau\in S^{1}}v_{\tau}(\{-R\}\times S^{1}))\to H_{1} (\bigcup_{\tau\in S^{1}}v_{\tau}(\{R\}\times S^{1}))
 \end{equation}
 is the  induced map by $\psi$. Consider the  inclusion $i:\bigcup_{\tau\in S^{1}}v_{\tau}(\{R\}\times S^{1}) \to \bigcup_{\tau\in S^{1}}v_{\tau}([R,+\infty)\times S^{1})\cup \mathrm{Im}\gamma_{1}$. Then $i_{*}\circ \psi_{*}([\bigcup_{\tau\in S^{1}}v_{\tau}(-R,0)])= p [\gamma_{1}]$.   This implies that the multiplicity of $\iota:S^{1}\to S^{1}$ is $p$. We complete the proof of Proposition \ref{doublecove}.
 \end{proof}

 \begin{proof}[\bf Proof of Theorem \ref{familyofhol}]
Since $\iota:S^{1}\to S^{1}$ is a $p$-fold cover,  by considering a composite function $\tau \mapsto \tau-\tau_{0}$, we may assume that $\iota(0)=0$. Let $\Tilde{S}^{1}=\mathbb{R}/p\mathbb{Z}$ and $\pi:\Tilde{S}^{1}\to S^{1}$ be the natural projection. Then there is a lift $\Tilde{p}:(S^{1},0)\to (\Tilde{S}^{1},0)$. Since  $\Tilde{p}$ is a diffeomorphism, the map $\Tilde{p}^{-1}:\Tilde{S}^{1} \to S^{1}$ is defined and $p \circ \Tilde{p}^{-1}=\pi$. Define $j:S^{1}\to S^{1}$ as $j(\tau)=p\tau$. then there is a lift $\Tilde{j}:(S^{1},0)\to (\Tilde{S}^{1},0)$ with $\pi \circ \Tilde{j} =j$. From the construction, we have $p \circ \Tilde{p}^{-1} \circ \Tilde{j}=j$.

Define $\Tilde{u}(\tau)=\Tilde{v}(\Tilde{p}^{-1} \circ \Tilde{j}(\tau))$. Then this is exactly what we want. Indeed 
\begin{equation}\label{plimit}
    \lim_{s\to +\infty}u_{\tau}(s,t)=\gamma_{1}(T_{1}(t+p\circ \Tilde{p}^{-1} \circ \Tilde{j}(\tau))=\gamma_{1}(T_{1}(t+j(\tau)))=\gamma_{1}(T_{1}(t+p\tau))
\end{equation}
\begin{equation}
    \lim_{s\to -\infty}u_{\tau}(s,t)=\gamma_{2}(-T_{2}t).
\end{equation}

We complete the proof of Theorem \ref{familyofhol}.
 \end{proof}

\end{document}